\newtheorem{Definition}{Definition}[part]
\newtheorem{Lemma}{Lemma}[part]
\newtheorem{Proposition}{Proposition}[part]
\newtheorem{Remark}{Remark}[part]
\newtheorem{Theorem}{Theorem}[part]
\numberwithin{Assumption}{section} \numberwithin{Corollary}{section}
\numberwithin{Definition}{section} \numberwithin{equation}{section}
\numberwithin{Example}{section} \numberwithin{Lemma}{section}
\numberwithin{Proposition}{section} \numberwithin{Remark}{section}
\numberwithin{Theorem}{section}
\def\a{\alpha}
\date{}
\title{Global weak solutions to 3D compressible Navier-Stokes-Poisson equations with density-dependent viscosity}
\author [Yulin Ye~~and~~ ]{}
\date{}
\begin{document}
\maketitle
\centerline{\scshape Yulin Ye }

\medskip
{\footnotesize
  \centerline{School of Mathematical Sciences, Capital Normal University}
  \centerline{Beijing  100048, P. R. China}
   \centerline{  \it Email: nkyelin@163.com}
   }
\vspace{3mm}

\pagestyle{myheadings} \thispagestyle{plain}\markboth{} {\small{global weak solutions to Compressible Navier-Stokes-Poisson equations }}

\vspace{3mm}

\textbf{Abstract:}
Global-in-time weak solutions to the Compressible Navier-Stokes-Poisson equations in a three-dimensional torus for large data are considered in this paper. The system takes into account density-dependent viscosity and non-monotone presseur. We prove the existence of global weak solutions to NSP equations with damping term by using the Faedo-Galerkin method and the compactness arguments on the condition that the adiabatic constant satisfies $\gamma>\frac{4}{3}$.

\textbf{Keywords:}
global weak solutions; compressible Navier-Stokes-Poisson equations; density-dependent viscosity; vacuum.

\section{Introduction and Main Results}
 \setcounter{equation}{0}
\setcounter{Assumption}{0} \setcounter{Theorem}{0}
\setcounter{Proposition}{0} \setcounter{Corollary}{0}
\setcounter{Lemma}{0}

In this paper, we consider the following compressible
Navier-Stokes-Poisson equations with density-dependent viscosity
coefficients:

\begin{equation}\label{a1}
\left\{\begin{array}{lll}
\partial_{t}\rho+\rm{div}(\rho u)=0,\\
\partial_{t}(\rho u)+{\rm div}(\rho u\otimes u)+\nabla P(\rho)-{\rm div}(\rho\mathbb{D}u)+r_{1}\rho |u|u=\rho\nabla{\Phi},\\
\lambda\triangle\Phi=4\pi G(\rho -\frac{1}{|\mathbb{T}^{3}|}\int_{\mathbb{T}^{3}}\rho dx),\ \ \lambda=\pm 1,
\end{array}\right.
\end{equation}
with the initial conditions
\begin{equation}\label{a2}
\rho(0,x)=\rho_{0},\rho u(0,x)=m_{0},
\end{equation}
where $t\geq0,x\in\mathbb{T}^{3},\rho=\rho(t,x)$\ and\ $u=u(t,x)$ represent the
fluid density and velocity respectively, $\mathbb{D}u$ is the strain tensor with $\mathbb{D}u=\frac{\nabla u+\nabla u^{\mathrm{T}}}{2}$, and the pressure $P$ is a non-monotone function of the density(see\cite{DFPS2004} for motivations) which satisfies the following conditions:
\begin{equation}\label{a3}
\left\{\begin{array}{lll}
P\in C^{1}(\mathbb{R}_{+}),\ \ P(0)=0,\\
\frac{1}{a}z^{\gamma-1}-b\leq P^{'}(z)\leq a z^{\gamma-1}+b\ \ \ for\ all\ z\geq 0,
\end{array}\right.
\end{equation}
for two constants $a>0$ and $b\geq0$.

The term on the right hand side of the second equation in (\ref{a1}) describes the internal force of gradient vector field produced by potential functions, which can be uniquely solved by the poisson equation$(\ref{a1})_{3}$, yields
$$\lambda\Phi(x)=G\int g(x,y)\rho(y)dy,$$
if and only if
$$\lambda\triangle \Phi=4\pi G(\rho-\frac{1}{|\mathbb{T}^{3}|}\int_{\mathbb{T}^{3}}\rho dx),\ \ in \ \mathbb{T}^{3},\ \ \int_{\mathbb{T}^{3}}\Phi dx=0,$$
where $g=g(x,y)$ denotes the Green's function of the poisson part, $G>0$ is a fixed constant. Moreover, for simplicity, using the conversation of mass $\int_{\mathbb{T}^{3}} \rho dx=\int_{\mathbb{T}^{3}} \rho_{0}dx$, the poisson equation $(\ref{a1})_{3}$ can be normalized as
$$\lambda\triangle\Phi=4\pi G(\rho -1).$$
From a physical point of view, the meaning of the Navier-Stokes-Poisson system is determined by the sign of the parameter $\lambda$. When $\lambda>0$, the potential force $\Phi$ represents the electrostatic potential which produces
the electric field $E=-\nabla\Phi$ and Equations (\ref{a1}) are used to describe the transportation of charged particles in electronic devices, and then $\rho\geq 0,u$ represent the charge density and velocity, respectively. On the other hand, if $\lambda<0$, the potential force $\Phi$ denotes the gravitational force and the Navier-Stokes-Poisson system is used in astrophysics to describe the motion of gaseous stars, and $\rho\geq0,u$ denote the density, velocity of a gaseous star, respectively.

Navier-Stokes-Poisson equations has attracted the attention of many physicists and mathematicians because of its physical importance, rich phenomena, and mathematical challenges. For the case of constant viscosity coefficients, Ducomet and Feireisl in \cite{DF2004} considered the full Navier-Stokes-Poisson equations and proved when $\gamma>\frac{3}{2}$, there exists a global-in-time variational weak solution. In \cite{DFPS2004} Ducomert et al. also proved there exists a global weak solution to the barotropic compressible Navier-Stokes-Poisson equations with no-monotone pressure provided that $\gamma>\frac{3}{2}$. Donatelli \cite{D2003} considered the Cauchy problem for the coupled Navier-Stokes-Poisson equations and gave a positive answer to the existence of local and global weak solutions. Zhang and Tan in \cite{zt2007}, by using the theory of Orlicz spaces, have proved the existence of globally defined finite energy weak solutions for Navier-Stokes-Poisson equations in two dimensions with the pressure satisfying $P(\rho)=a \rho\log^{d}\rho$ for large $\rho$, and $d>1,\ a>0$. Cai and Tan \cite{ct2015} also proved the system has the global weak time-periodic solution for the Navier-Stokes-Poisson equations in a bounded domain with periodic boundary condition as $\gamma>\frac{5}{3}$ when the  external force is time-periodic. Besides, Jiang et al.\cite{jty2009} considered the global behavior of weak solutions of the Navie-Stokes-Poisson equations in a bounded domain with arbitrary forces.

 However, for the case of density-dependent viscosity coefficients, the problem is much more challenge because of the  degeneration near the vacuum and the results are limit. Ducomet et.al in \cite{dnv2010} studied the global stability of the weak solutions to the Navier-Stokes-Poisson equations with the  degenerate viscosities as $\gamma>\frac{4}{3}$, in which the pressure $P(\rho)$ is not necessary a monotone function of the density. Furthermore, in \cite{dnv2011} they also considered Cauchy problem for the Navier-Stokes-Poisson equations of spherically symmetric motions in $\mathbb{R}^{3}$, both constant viscosities and density-dependent viscosities included, and proved the global stability of the weak solutions just provided  that the polytropic index $\gamma$ satisfies $\gamma > 1$. Particularly, if without the drag term $\rho|u|u$ and the poisson term $\rho\nabla\Phi$, the equations (\ref{a1}) will be reduced to the classical barotropic compressible Navier-Stokes equations with degenerate viscosities, especially to be the well known shallow water equation in the dimensions two, the global existence of weak solutions of which is a long standing open problem proposed by Lions \cite{lions1998}. Recently, A.Vasseur and C.Yu have proved in \cite{vy2014} that there exists a global weak solution to the compressible Navier-Stokes equations by constructing some smooth multipliers allowing to derive the Mellet-Vasseur type inequality \cite{mv2007} for the weak solutions, and they also proved the existence of the global weak solutions for the  quantum compressible Navier-Stokes equations, which can be seen as a approximate system to the original equations in \cite{vy2014}, almost at the same time, Li and Xin gave an another approach in \cite{lx2015}. Moreover, it should be noted that the Quasi-neutral limit problem is also considered by many mathematicians for the case of $\lambda>0$, for more details, the readers can refer to \cite{bdd2005,yj2014} and references therein.

 Inspired by \cite{mv2007}, in the present paper, we consider the compressible Navier-Stokes equations with degenerate viscosities, potential force and the damping term, in which the pressure $P(\rho)$ is not necessary a monotone function, and we prove that the problem admits a global weak solution as $\gamma>\frac{4}{3}$ for the case $\lambda=-1$, or $\gamma>1$ for the case $\lambda=1$. Compared to the case of the classical compressible Navier-Stokes equations, the Navier-Stokes-Poisson problem is much more complex and some new difficulties will occur, for example, we can not deduce the energy estimates directly due to the poisson term and non-monotone pressure term, and moreover, when we deduce the energy estimates and the Bresch-Dejardins entropy, the estimates will depend on the index $\varepsilon,\delta$ and $\eta$, so we need to be very careful as we deduce these estimates because we need to tend the $\varepsilon,\eta,\delta$ to zero step by step later in the proof of the main theorem. To our knowledge, this is the first complete proof for the Navier-Stokes-Poisson problem with the degenerate viscosities and here the pressure is not necessary a monotone function of the density, which contains the classical $\gamma$ law case. So our results are much general and can be seen as a supplement of Ducomet et.al\cite{dnv2010} and a extension of \cite{vy2014}, \cite{vy2015}.

 Throughout this paper, we only focus on the case that $\lambda=-1$, and after some small modifications, the method can be directly applied to the case that $\lambda=1$, so we omit the details.
\subsection{ Formulation of the weak solutions and main result}
For the smooth solutions $(\rho,u,\Phi(\rho))$, multiplying the momentum equation $(\ref{a1})_{2}$ and integrating by parts we can deduce the following energy inequality
\begin{equation}\label{a4}
\begin{aligned}
 E(t)+\int_{0}^{T}\int_{\mathbb{T}^{3}} \rho |\mathbb{D}u|^{2}+r_{1}\int_{0}^{T}\int_{\mathbb{T}^{3}}\rho u^{3}\leq E_{0},
 \end{aligned}
\end{equation}
where
 $$E(t)=\int_{\mathbb{T}^{3}}( \frac{1}{2}\rho u^{2}+{\Pi}(\rho)-\frac{1}{8\pi G}|\nabla \Phi|^{2})dx, \ \ \ {\Pi}(\rho)=\rho\int_{1}^{\rho}\frac{P(s)}{s^{2}}ds$$
and
$$E_{0}=\int_{\mathbb{T}^{3}}( \frac{1}{2}\rho_{0} u_{0}^{2}+\Pi(\rho_{0})-\frac{1}{8\pi G}|\nabla \Phi(\rho_{0})|^{2})dx.$$
However, the above energy estimate is not enough to prove the stability of the weak solutions $(\rho,u,\Phi(\rho))$ of $(\ref{a1})$, fortunately, if the viscosity coefficients satisfy a special relation, which means $\lambda(\rho)=\rho\mu(\rho)^{'}-\mu(\rho)$, in this paper it means $\mu(\rho)=\rho$, $\lambda(\rho)=0$, we will obtain the following B-D entropy estimate which was first introduced by Bresch-Desjardins-Lin in \cite{BDL2003}:
\begin{equation}\label{a5}
  \begin{aligned}
  \int_{\mathbb{T}^{3}} \frac{1}{2}&\rho (u+\frac{\nabla \rho}{\rho})^{2} \ dx+\frac{4}{a\gamma^{2}}\int_{0}^{T}\int_{\mathbb{T}^{3}}|\nabla \rho^{\frac{\gamma}{2}}|^{2}dxdt\\
  &+\int_{0}^{T}\int_{\mathbb{T}^{3}}\rho |\nabla u|^{2} dxdt \leq \int_{\mathbb{T}^{3}}\big(\frac{1}{2}\rho_{0}u_{0}^{2}+|\nabla{\sqrt{ \rho_{0}}}|^{2}\big)dx+C,
  \end{aligned}
  \end{equation}
  where C is bounded by the initial energy.
  Thus the initial data should satisfy the follows
  \begin{equation}\label{a6}
  \begin{aligned}
  &\rho_{0}\in L^{1}(\mathbb{T}^{3})\cap L^{\gamma}(\mathbb{T}^{3}),\ \ \rho_{0}\geq 0,\ \ \nabla\sqrt{\rho_{0}}\in L^{2}(\mathbb{T}^{3}),\\
  &m_{0}\in L^{1}(\mathbb{T}^{3}),\ m_{0}=0\ \ if\ \ \rho_{0}=0,\ \ \ \frac{|m_{0}|^{2}}{\rho_{0}}\in L^{1}(\mathbb{T}^{3}).
  \end{aligned}
  \end{equation}
\begin{Definition}\label{def1}
We will say $(\rho, u,\Phi)$ is the finite energy weak solution of the problem (\ref{a1})-(\ref{a2}) if the following is satisfied.
\begin{enumerate}
  \item $\rho$, $u$ belong to the classes
  \begin{equation}\label{a7}
  \left\{\begin{array}{lll}
  \rho\in L^{\infty}((0,T);L^{1}\cap L^{\gamma}(\mathbb{T}^{3})),\ \sqrt{\rho}u\in L^{\infty}((0,T);L^{2}(\mathbb{T}^{3})),\\
  \sqrt{\rho}\in L^{\infty}((0,T);H^{1}(\mathbb{T}^{3})),\ \nabla{\rho^{\frac{\gamma}{2}}}\in L^{2}((0,T);L^{2}(\mathbb{T}^{3})),\\
  \sqrt{\rho}\nabla u\in L^{2}((0,T);L^{2}(\mathbb{T}^{3})),\ \rho^{\frac{1}{3}}u\in L^{3}((0,T);L^{3}(\mathbb{T}^{3}),
  \end{array}\right.
  \end{equation}
  \item
  The equations $(\ref{a1})_{1}$-$(\ref{a1})_{2}$ hold in the sense of $\mathcal{D}^{'}((0,T)\times\mathbb{T}^{3})$,$(\ref{a1})_{3}$ holds a.e. for $(t,x)\in ((0,T)\times \mathbb{T}^{3})$,
  \item
  $(\ref{a2})$ holds in $\mathcal{D}^{'}(\mathbb{T}^{3})$,

  \item
  (\ref{a4}) and (\ref{a5}) hold for almost every $t\in [0,T]$,

\end{enumerate}
\end{Definition}
Before stating the main theorem, we will give some notations:\\

\textbf{Notations:} Throughout this paper, $C$ denotes a generic positive constant which may depend on the initial data or some other constants but independent of the indexes $\varepsilon,\eta,\delta$ and $r_{0}$, $C(\cdot)>0$ means the constant $C$ particularly depends on the parameters in the bracket, and
$$\int f=\int_{\mathbb{T}^{3}} fdx,\ \ \int_{0}^{T}\int f=\int_{0}^{T}\int_{\mathbb{T}^{3}} f dxdt,$$
$$\|f\|_{L^{p}}=\|f\|_{L^{p}(\mathbb{T}^{3})},\ \ \|f\|_{L^{p}(0,T;W^{s,r})}=\|f\|_{L^{p}(0,T;W^{s,r}(\mathbb{T}^{3}))}.$$

Then we will state our main results:
\begin{Theorem}\label{theorem1}
Let $\lambda=-1$, $\gamma >\frac{4}{3}$ and the initial data satisfies(\ref{a6}), then for any time $T$, there exists a weak solution $(\rho, u,\Phi)$ to (\ref{a1})-(\ref{a2}) in the sense of Definition \ref {def1}.

\end{Theorem}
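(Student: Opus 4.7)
The plan is to construct the weak solution through a multi-level approximation scheme inspired by Vasseur-Yu \cite{vy2014} and the Bresch-Desjardins framework, introducing three regularization parameters $\varepsilon,\eta,\delta>0$. At the outermost level, I would consider a system in which the momentum equation carries an artificial viscosity $\varepsilon\Delta u$ (to restore parabolicity where $\rho=0$), a quantum/capillary regularization of the form $\eta\,\rho\nabla\bigl(\Delta\sqrt{\rho}/\sqrt{\rho}\bigr)$, and an artificial pressure $\delta\rho^{\beta}$ for $\beta$ sufficiently large, while the continuity equation carries a density regularization $\varepsilon\Delta\rho$. The Poisson equation is solved on $\mathbb{T}^{3}$ with zero average, so $\nabla\Phi$ depends linearly and compactly on $\rho$. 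At the innermost level, the existence of approximate solutions $(\rho_{n},u_{n},\Phi_{n})$ is obtained by a Faedo-Galerkin scheme where $u_{n}$ lies in a finite-dimensional subspace, $\rho_{n}$ is reconstructed from the regularized transport equation, and $\Phi_{n}$ from the Poisson equation.

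For this approximate system I would derive the basic energy inequality (\ref{a4}) by testing the momentum equation against $u_{n}$; the Poisson term is handled via the identity $\int \rho\nabla\Phi\cdot u = -\tfrac{1}{8\pi G\lambda}\tfrac{d}{dt}\int|\nabla\Phi|^{2}$ after using the continuity equation, while the non-monotone pressure is split into a monotone coercive part and a remainder bounded by $b\rho$ which is controlled via mass conservation. The B-D entropy (\ref{a5}) is then obtained by testing against $\nabla\log\rho$, where the structural relation $\mu(\rho)=\rho$, $\lambda(\rho)=0$ is essential to cancel the troublesome terms. One must carefully track how each bound depends on the three parameters: the capillary term yields an $H^{1}$ bound on $\sqrt{\rho}$ uniform in all parameters, the artificial pressure provides extra integrability of $\rho$ uniform in $\varepsilon,\eta$, and the $\varepsilon$-contribution to the entropy must be absorbed before the $\eta$-limit is taken.

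The limit is then taken in stages: first $n\to\infty$ in the Galerkin scheme, then $\varepsilon\to 0$, $\eta\to 0$, and finally $\delta\to 0$. At each stage I would extract weak and strong convergent subsequences; strong convergence of $\rho$ in $C([0,T];L^{p})$ follows from the B-D bound on $\nabla\sqrt{\rho}$ together with the continuity equation via an Aubin-Lions argument, and $\nabla\Phi$ passes to the limit by elliptic regularity. The crucial step is the limit of $\rho u\otimes u$, which requires strong compactness of $\sqrt{\rho}\,u$ in $L^{2}((0,T)\times\mathbb{T}^{3})$. This is achieved through a Mellet-Vasseur type inequality \cite{mv2007}, proved by testing the momentum equation against a smooth multiplier depending on $|u|$ and exploiting the damping term $r_{1}\rho|u|u$ together with the extra integrability $\rho^{1/3}u\in L^{3}$ furnished by interpolation between the energy and entropy bounds.

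The main obstacle I anticipate lies in closing the energy estimate in the gravitational case $\lambda=-1$: the term $-\tfrac{1}{8\pi G}\int|\nabla\Phi|^{2}$ contributes with the wrong sign in $E(t)$, so the energy inequality does not directly bound $\|\rho\|_{L^{\gamma}}$. I would need to absorb $\|\nabla\Phi\|_{L^{2}}^{2}$ by the $H^{-1}$-to-$L^{6/5}$ elliptic estimate $\|\nabla\Phi\|_{L^{2}}\lesssim \|\rho-1\|_{L^{6/5}}$, then interpolate between $L^{1}$ and $L^{\gamma}$; this is precisely where the hypothesis $\gamma>\tfrac{4}{3}$ becomes indispensable, since only then is $\tfrac{6}{5}$ in the admissible range and the absorption can be completed with a coefficient strictly less than $1$. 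A parallel difficulty occurs for the non-monotone pressure, whose coercivity in $\Pi(\rho)$ only holds up to an $O(\rho)$ perturbation that must be absorbed in the same manner. Finally, the Mellet-Vasseur step has to be adapted to accommodate the Poisson forcing, which produces an additional term of the form $\int\rho\nabla\Phi\cdot u\,\varphi(|u|)$ that must be controlled uniformly in $\varepsilon,\eta,\delta$ by the energy and B-D estimates.
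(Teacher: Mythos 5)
Your overall skeleton (Faedo--Galerkin at the innermost level, energy inequality plus Bresch--Desjardins entropy with careful tracking of the parameters, staged limits $n\to\infty$, $\varepsilon\to0$, $\eta\to0$, $\delta\to0$, and the identification of $\gamma>\frac43$ as exactly the condition under which $\|\nabla\Phi\|_{L^{2}}^{2}\lesssim\|\rho-1\|_{L^{6/5}}^{2}$ can be interpolated between $L^{1}$ and $L^{\gamma}$ and absorbed into $\Pi(\rho)$) coincides with the paper's proof, including the treatment of the non-monotone pressure by splitting off the $O(\rho)$ remainder. You deviate in two places. First, your regularization layer is the Vasseur--Yu one (artificial viscosity $\varepsilon\Delta u$, quantum capillarity $\eta\rho\nabla(\Delta\sqrt{\rho}/\sqrt{\rho})$, artificial pressure $\delta\rho^{\beta}$), whereas the paper uses a bi-Laplacian $\mu\Delta^{2}u$, a cold pressure $\eta\nabla\rho^{-6}$, a seventh-order term $\delta\rho\nabla\Delta^{3}\rho$, and an extra linear damping $r_{0}u$. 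The paper's choice is not cosmetic: the pair $\eta\rho^{-6}$, $\delta\rho\nabla\Delta^{3}\rho$ is what yields a pointwise positive lower bound on $\rho$ (via the $L^{\infty}(H^{3})$ bound and estimate (\ref{b30})) and hence legitimizes $\nabla\rho/\rho$ as a test function for the B--D entropy, and $\mu\Delta^{2}u$ is needed to put $\partial_{t}(\nabla\rho/\rho)$ in $L^{2}$; your scheme would have to supply these two facts by other means, which you do not address. Second, for the strong convergence of $\sqrt{\rho}u$ you invoke a Mellet--Vasseur inequality obtained by testing with a multiplier in $|u|$; the paper does not prove any such inequality. Since the drag term $r_{1}\rho|u|u$ is part of the system itself, the energy inequality already gives $\rho^{1/3}u$ bounded in $L^{3}((0,T)\times\mathbb{T}^{3})$ uniformly in all parameters, and the paper passes directly to strong $L^{2}$ convergence of $\sqrt{\rho}u$ by the elementary truncation $\mathbf{1}_{|u|\leq M}$ plus Egorov, with the tails controlled by $M^{-1}\int\rho|u|^{3}$ (Lemma 6.4). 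Your Mellet--Vasseur detour is therefore unnecessary work in the present setting (it is what one would need to \emph{remove} the drag term, as the paper notes in Remark 1.1), though not incorrect. Both routes close; the paper's buys a more elementary compactness step at the price of a heavier, more ad hoc regularization, while yours stays closer to the by-now standard quantum Navier--Stokes approximation but leaves the lower bound on the density and the admissibility of the B--D test function as gaps to be filled.
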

\begin{Remark}
It should be pointed out that the damping term $\rho|u|u$ here is used to give the strong convergence of $\sqrt{\rho}u$ in $L^{2}(0,T;L^{2}(\mathbb{T}^{3})$, so follow the approach in \cite{vy2014}, we can similarly deduce the Mellet-Vasseur inequality for weak solutions, and the damping term can be removed, for this case, we will show the details in our future paper.

\end{Remark}
\begin{Remark}
For the case that $\lambda=1$, with some small modifications, the proof in this paper can be directly extended to this case, and we can also prove that the system (\ref{a1}) admits a global weak solution just provided that $\gamma>1$. So in this paper, we omit the details.
\end{Remark}

The rest paper is organized as follows: in section 2, we state some elementary inequalities and compactness theorems which will be used frequently in the whole proof. To proof our main result, we use the weak compactness analysis method and need to pass to the limits at several approximate levels. In section 3, following the method in \cite{vy2015}, we show the existence of global-in-time weak solutions to the approximate equations by using the Faedo-Galerkin method. In section 4, we deduce the Bresch-Dejardins entropy estimates and pass to the limits as $\varepsilon,\mu\rightarrow0$. In section 5-6, by using the standard compactness arguments, we pass to the limits as $\eta\rightarrow0$ and $\delta\rightarrow0$ step by step.
\section{Preliminaries}
Firstly, we introduce the following Gagliardo-Nirenberg inequality which we will used later when we deduce the energy estimates and B-D entropy.
\begin{Lemma}\label{le1}\cite{n1959}(Gagliardo-Nirenberg interpolation inequality)
For function $u:\Omega\rightarrow \mathbb{R}$ defined on a bounded Lipschitz domain $\Omega\subset \mathbb{R}^{n}$, $\forall 1\leq q,r\leq \infty$ and a natural number $m$, Suppose also that a real number $\alpha$ and a natural number $j $ are such that
$$\frac{1}{p}=\frac{j}{n}+(\frac{1}{r}-\frac{m}{n})\alpha+\frac{1-\alpha}{q}$$
and
$$\frac{j}{m}\leq \alpha\leq 1,$$
then we have
$$\|D^{j}u\|_{L^{p}}\leq C_{1}\|D^{m}u\|_{L^{r}}^{\alpha}\|u\|_{L^{q}}^{1-\alpha}
+C_{2}\|u\|_{L^{s}}$$
where $s>0$ is arbitrary; naturally, the constants $C_{1}$ and $C_{2}$ depend upon the domain $\Omega$ as well as $m,n$ etc.

\end{Lemma}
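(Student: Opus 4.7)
The plan is to follow the classical strategy of Nirenberg's 1959 paper, reducing to $\mathbb{R}^n$ and then combining a scaling argument with an interpolation between two endpoint estimates. First, since $\Omega$ is a bounded Lipschitz domain, I would use a Calder\'on-type total extension operator $E: W^{m,r}(\Omega)\cap L^{q}(\Omega)\to W^{m,r}(\mathbb{R}^n)\cap L^{q}(\mathbb{R}^n)$, together with a smooth cutoff $\chi$ equal to one on $\Omega$ and compactly supported, to produce $\tilde u=\chi Eu$. Because derivatives falling on $\chi$ generate terms controlled only by lower-order norms of $u$, this extension step is the precise mechanism that introduces the correction term $C_{2}\|u\|_{L^{s}}$ on the right-hand side; all further work is then done for $\tilde u$ on $\mathbb{R}^n$.

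On $\mathbb{R}^n$ I would first verify the necessity of the scaling relation by dilations $u_\lambda(x)=u(\lambda x)$: the homogeneity of each norm forces
\[
\frac{1}{p}=\frac{j}{n}+\Bigl(\frac{1}{r}-\frac{m}{n}\Bigr)\alpha+\frac{1-\alpha}{q},
\]
which is precisely the stated condition. Next I would establish the two endpoint cases. For $\alpha=1$ (with the scaling condition then reducing to $\frac{1}{p}=\frac{j}{n}+\frac{1}{r}-\frac{m}{n}$) the inequality is the classical Sobolev embedding $\|D^j u\|_{L^p}\lesssim \|D^m u\|_{L^r}$, proved either via Riesz-potential representation and Hardy--Littlewood--Sobolev or via Littlewood--Paley decomposition with Bernstein's inequality. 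For $\alpha=j/m$, the pure interpolation endpoint, I would argue by induction on $m$: the base case can be obtained by integration by parts, for instance
\[
\int |\nabla u|^{p}\,dx\le C\|u\|_{L^\infty}^{\,p-2}\int |u|\,|\Delta u|\,dx,
\]
followed by H\"older, and the inductive step reduces estimates at order $m$ to estimates at order $m-1$ on $\nabla u$.

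Having the two endpoint estimates $\alpha=j/m$ and $\alpha=1$, I would interpolate between them for intermediate $\alpha\in[j/m,1]$ using log-convexity of $L^p$ norms: Lyapunov's inequality $\|f\|_{L^p}\le \|f\|_{L^{p_1}}^{\theta}\|f\|_{L^{p_2}}^{1-\theta}$ whenever $\frac{1}{p}=\frac{\theta}{p_1}+\frac{1-\theta}{p_2}$ applied to $f=D^j\tilde u$, with $p_1,p_2$ the exponents of the two endpoints, yields the full range. Restricting back to $\Omega$ and collecting the lower-order contributions from the cutoff finishes the proof, with $C_1$ depending only on $n,m,j,p,q,r,\alpha$ and $C_2$ depending additionally on $\Omega$ and the arbitrary $s>0$.

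The main obstacle, in my view, is not the clean argument on $\mathbb{R}^n$ but rather the careful handling of the well-known exceptional cases: when $r>1$ and $m-j-n/r$ is a nonnegative integer, the pure-scaling inequality at $\alpha=1$ fails without the correction term, and care is needed to verify that the hypothesis $j/m\le \alpha\le 1$ combined with the scaling identity still gives a valid inequality once the correction $C_{2}\|u\|_{L^{s}}$ is present. Keeping track of which $\alpha$ are genuinely attained, and ensuring the log-convexity interpolation remains valid across these borderline configurations, is the subtle point.
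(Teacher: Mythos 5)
The paper does not actually prove this lemma: it is quoted verbatim as a classical result with the citation to Nirenberg's 1959 paper, so there is no in-paper argument to compare yours against. Your sketch is a faithful outline of the standard proof strategy (extension from the Lipschitz domain to $\mathbb{R}^{n}$, the dilation argument showing necessity of the exponent relation, the two endpoints $\alpha=1$ (Sobolev) and $\alpha=j/m$ (integration by parts plus induction on $m$), and Lyapunov interpolation $\|D^{j}\tilde u\|_{L^{p}}\le\|D^{j}\tilde u\|_{L^{p_{1}}}^{\theta}\|D^{j}\tilde u\|_{L^{p_{2}}}^{1-\theta}$ for intermediate $\alpha$), and the bookkeeping you describe does close up: with $\theta=(1-\alpha)/(1-j/m)$ the exponents on $\|D^{m}u\|_{L^{r}}$ and $\|u\|_{L^{q}}$ come out to $\alpha$ and $1-\alpha$, and the affine dependence of $1/p$ on $\alpha$ makes the Lyapunov condition automatic. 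Two points would need care in a full write-up. First, your explanation of the correction term $C_{2}\|u\|_{L^{s}}$ as coming only from derivatives hitting the cutoff is incomplete: the deeper reason the homogeneous inequality must fail on a bounded domain is that polynomials of degree between $j$ and $m-1$ have $D^{m}u=0$ but $D^{j}u\neq 0$, and the arbitrariness of $s>0$ (including $s<1$) uses boundedness of $\Omega$ to downgrade whatever lower-order norm the extension produces. Second, your displayed base-case inequality $\int|\nabla u|^{p}\le C\|u\|_{L^{\infty}}^{p-2}\int|u||\Delta u|$ is not the standard one and does not follow directly from a single integration by parts; the usual statement is $\|\nabla u\|_{L^{p}}^{2}\le C\|u\|_{L^{p_{1}}}\|D^{2}u\|_{L^{p_{2}}}$ with $2/p=1/p_{1}+1/p_{2}$, obtained by writing $\int|\nabla u|^{p}=-\int u\,\mathrm{div}(|\nabla u|^{p-2}\nabla u)$ and applying H\"older. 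Neither issue is fatal; as a proof of a quoted classical lemma your outline is sound, and for the purposes of this paper the citation suffices.
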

The following two Lemmas are two standard compactness results and will help us get the strong convergence of the solutions:
\begin{Lemma}\label{le2}\cite{a1963,s1986}(Aubin-Lions Lemma)
Let $X_{0},X$ and $X_{1}$ be three Banach spaces with $X_{0}\subseteq X \subseteq X_{1}$. Suppose that $X_{0}$ is compactly embedded in $X$ and that $X$ is continuously embedded in $X_{1}$. For $1\leq p,q\leq +\infty$, let
$$W=\{u\in L^{p}([0,T];X_{0}) \mid \partial_{t} u\in L^{q}([0,T];X_{1})\}.$$
(i) If $p<+\infty$, then the embedding of $W$ into $L^{p}([0,T];X)$ is compact.\\
(ii) If $p=+\infty$ and $q>1$, then the embedding of $W$ into $C([0,T];X)$ is compact
\end{Lemma}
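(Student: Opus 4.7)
The plan is to reduce the question to a one-shot interpolation inequality (Ehrling's lemma), which converts the compact embedding $X_0\hookrightarrow\hookrightarrow X$ into a quantitative estimate, and then establish compactness in the weaker Bochner space $L^p(0,T;X_1)$ (respectively $C([0,T];X_1)$) by means of a Fréchet--Kolmogorov--Riesz type criterion whose equicontinuity part is driven by the $\partial_t$ bound. First I would prove: for every $\varepsilon>0$ there is $C_\varepsilon>0$ such that
\begin{equation*}
\|v\|_X\le \varepsilon\,\|v\|_{X_0}+C_\varepsilon\,\|v\|_{X_1}\qquad \text{for all } v\in X_0.
\end{equation*}
This is a standard contradiction argument: if it failed, one would extract a sequence $v_n\in X_0$ with $\|v_n\|_{X_0}=1$, $\|v_n\|_X\ge\varepsilon_0$, and $\|v_n\|_{X_1}\to 0$; by $X_0\hookrightarrow\hookrightarrow X$ a subsequence converges in $X$ to some $v$, while $X\hookrightarrow X_1$ forces $v=0$, contradicting $\|v\|_X\ge\varepsilon_0$.

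Next, given a bounded sequence $\{u_n\}\subset W$, integrating Ehrling's inequality pointwise in $t$ and raising to the $p$-th power gives, for $p<\infty$,
\begin{equation*}
\|u_n-u_m\|_{L^p(0,T;X)}\le \varepsilon\,M+C_\varepsilon\,\|u_n-u_m\|_{L^p(0,T;X_1)},
\end{equation*}
where $M$ bounds $\sup_n\|u_n\|_{L^p(0,T;X_0)}$. Hence it suffices to extract a subsequence converging in $L^p(0,T;X_1)$. I would verify the two hypotheses of the Bochner-valued Fréchet--Kolmogorov criterion: (a) equicontinuity of time translations, using
\begin{equation*}
u_n(t+h)-u_n(t)=\int_t^{t+h}\partial_t u_n(s)\,ds,
\end{equation*}
so that H\"older's inequality yields $\|u_n(\cdot+h)-u_n(\cdot)\|_{L^p(0,T-h;X_1)}\le C\,h^{\alpha}\,\|\partial_t u_n\|_{L^q(0,T;X_1)}$ with a positive power $\alpha$ (the exponent obtained via H\"older in $s$ and in $t$, plus interpolation between $L^p$ and $L^q$; the delicate endpoint $q=1$ is handled by an extra time-mollification argument); (b) pointwise (or small-interval-averaged) precompactness in $X_1$, which follows because $\frac1h\int_t^{t+h}u_n(s)\,ds$ is bounded in $X_0$, and $X_0\hookrightarrow\hookrightarrow X\hookrightarrow X_1$ forces compactness in $X_1$. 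Combining (a) and (b) yields relative compactness in $L^p(0,T;X_1)$, and then Ehrling upgrades it to $L^p(0,T;X)$, proving (i).

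For part (ii) with $p=\infty$ and $q>1$, I would invoke the continuous embedding $W^{1,q}(0,T;X_1)\hookrightarrow C^{0,1-1/q}([0,T];X_1)$: the bound on $\partial_t u_n$ in $L^q(0,T;X_1)$ gives uniform H\"older equicontinuity of $t\mapsto u_n(t)$ into $X_1$, while $u_n(t)$ stays in a fixed bounded set of $X_0$, which is precompact in $X_1$. The Arzel\`a--Ascoli theorem then delivers a subsequence converging in $C([0,T];X_1)$. Applying Ehrling's inequality uniformly in $t$ and using the $L^\infty(0,T;X_0)$ bound upgrades the convergence to $C([0,T];X)$.

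The main obstacle is the equicontinuity-of-translates estimate when $q$ is allowed down to $1$ (only (ii) restricts $q>1$). In that case the simple H\"older estimate above fails and one needs the classical Simon device: approximate $u_n$ by its time averages $(u_n)_h=\frac1h\int_t^{t+h}u_n\,ds$, show $u_n-(u_n)_h$ is small in $L^p(0,T;X_1)$ uniformly in $n$ thanks to the $L^1$ bound on $\partial_t u_n$, and show $\{(u_n)_h\}_n$ lies in a compact subset of $C([0,T];X)$ for each fixed $h$ via the $X_0$-compactness; a diagonal argument then produces a Cauchy subsequence in $L^p(0,T;X)$. This two-scale splitting is really the heart of the lemma, and the rest is essentially bookkeeping.
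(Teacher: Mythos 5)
The paper states this lemma as a classical result cited from Aubin and Simon and supplies no proof of its own, so there is nothing internal to compare against. Your proposal correctly reconstructs the standard argument from those references --- Ehrling's interpolation inequality $\|v\|_{X}\le\varepsilon\|v\|_{X_{0}}+C_{\varepsilon}\|v\|_{X_{1}}$ to reduce compactness in $L^{p}(0,T;X)$ to compactness in $L^{p}(0,T;X_{1})$, Simon's translation/time-averaging criterion for the latter (including the correct recognition that the naive H\"older estimate fails at $q=1$ and must be replaced by the two-scale splitting $u_{n}=(u_{n}-(u_{n})_{h})+(u_{n})_{h}$), and Arzel\`a--Ascoli plus Ehrling for part (ii) --- so it is sound in outline, with only routine bookkeeping left implicit (upgrading the $L^{1}$-smallness of $u_{n}-(u_{n})_{h}$ to $L^{p}$-smallness via interpolation, and verifying that elements of $W$ with $p=\infty$, $q>1$ are genuinely continuous into $X$).
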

\begin{Lemma}\label{le3}\cite{nsbook}(Egoroff¡¯s theorem about uniform convergence)Let $f_{n}\rightarrow f$a.e.in $\Omega$, a bounded measurable set in $\mathbb{R}^{n}$, with $f$ finite a.e. Then for any $\varepsilon > 0$ there exists a measurable subset $\Omega_{\varepsilon} \subset \Omega$ such that $|\Omega\backslash \Omega_{\varepsilon}| < \varepsilon$ and $f_{n}\rightarrow f$ uniformly in $\Omega_{\varepsilon}$,
moreover, if
$$f_{n}\rightarrow f\ \ a.e.\ \ in\ \Omega,$$
$$f_{n}\in L^{p}(\Omega)\ \ and\  uniformly\  bounded\ , \ for\  any\   1<p\leq +\infty,$$\\
then, we have
$$f_{n}\rightarrow f \ \ strongly\ \ in\ L^{s},\ \ for\ any\  s\in [1,p).$$
\begin{proof}
Since $f_{n}\rightarrow f$ a.e. in $\Omega$ and $f_{n}$ is uniformly bounded in $L^{p}(\Omega)$, so due to the Egoroffs theorem, we have
$$\forall \varepsilon>0,\ \exists \Omega_{\varepsilon},\ |\Omega-\Omega_{\varepsilon}|<\varepsilon,\ \ \sup_{x\in \Omega_{\varepsilon}}|f_{n}(x)-f(x)|\rightarrow0,\ uniformly\ with\ n,$$
then we can get
\begin{equation*}
\begin{aligned}
\int_{\Omega}|f_{n}-f|^{s}dx&=\int_{\Omega_{\varepsilon}}
|f_{n}-f|^{s}dx+\int_{\Omega-\Omega_{\varepsilon}}|f_{n}-f|^{s}dx\\
&\leq \sup_{x\in\Omega_{\varepsilon}}|f_{n}-f|^{s}|\Omega_{\varepsilon}|
+C\||f_{n}-f\|_{L^{p}}^{s}|\Omega-\Omega_{\varepsilon}|
^{(p-s)\diagup p}\rightarrow 0.
\end{aligned}
\end{equation*}

\end{proof}

\end{Lemma}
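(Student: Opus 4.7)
The first assertion is the classical Egoroff theorem from real analysis, so I would simply cite a standard reference and not reprove it. The substantive content is the implication from a.e.\ convergence plus a uniform $L^p$ bound to strong convergence in $L^s$ for $s \in [1,p)$, and my plan is the standard ``split by uniform convergence'' argument.

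First I would fix $s \in [1,p)$ and an auxiliary $\varepsilon>0$. Applying the first part of the lemma (Egoroff) to the sequence $f_n$ on the bounded measurable set $\Omega$, I obtain a measurable $\Omega_\varepsilon \subset \Omega$ with $|\Omega\setminus\Omega_\varepsilon|<\varepsilon$ on which $f_n\to f$ uniformly. I would then decompose
$$\int_{\Omega}|f_n-f|^s\,dx=\int_{\Omega_\varepsilon}|f_n-f|^s\,dx+\int_{\Omega\setminus\Omega_\varepsilon}|f_n-f|^s\,dx.$$
The first piece is handled by uniform convergence together with $|\Omega_\varepsilon|\le|\Omega|<\infty$, so it tends to $0$ as $n\to\infty$ for every fixed $\varepsilon$.

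For the second piece I would use H\"older's inequality with conjugate exponents $p/s$ and $p/(p-s)$ (valid since $s<p$), together with the uniform $L^p$ bound on $f_n$ and the fact that $f\in L^p(\Omega)$ by Fatou's lemma, to get
$$\int_{\Omega\setminus\Omega_\varepsilon}|f_n-f|^s\,dx\le \|f_n-f\|_{L^p}^{s}\,|\Omega\setminus\Omega_\varepsilon|^{(p-s)/p}\le C^{s}\varepsilon^{(p-s)/p},$$
which is exactly the displayed bound in the lemma's proof. Taking $\limsup_{n\to\infty}$ and then $\varepsilon\to 0^+$ yields $\|f_n-f\|_{L^s}\to 0$. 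The case $p=+\infty$ is treated by the same splitting: the uniform $L^\infty$ bound gives $|f_n-f|\le 2C$ pointwise, so the second integral is dominated by $(2C)^s\varepsilon$, and the conclusion for every $s\in[1,\infty)$ follows identically.

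There is no real obstacle here; the only minor point requiring care is ensuring $f\in L^p(\Omega)$ so that $\|f_n-f\|_{L^p}$ is uniformly bounded, and this is an immediate consequence of Fatou applied to $|f_n|^p$ (or to $|f_n|$ in the $L^\infty$ case). Thus the proof reduces essentially to a one-line application of Egoroff followed by a one-line application of H\"older.
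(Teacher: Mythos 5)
Your proposal is correct and follows essentially the same route as the paper: invoke Egoroff to get uniform convergence off a small set, split the integral accordingly, and control the bad set by H\"older with exponents $p/s$ and $p/(p-s)$. The only additions — justifying $f\in L^p$ via Fatou and treating $p=+\infty$ separately — are minor refinements of the same argument.
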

\section{Faedo-Galerkin approximation}
In this section, we construct the approximate system to the original problem (\ref{a1}) by using the Faedo-Galerkin method, we proceed similarly in [\cite{feireisl2004}, Chapter. 7] and \cite{jungel2010}.
\subsection{Approximate the mass equation}
Let $T>0$, then we define a finite-dimensional space $X_{n}$=span$\{e_{1},...e_{n}\}$,\ $n\in\mathbb{N}$, where $\{e_{k}\}$ is an orthonormal basis of $L^{2}(\mathbb{T}^{3})$ which is also an orthogonal basis of $H^{1}(\mathbb{T}^{3})$. Let $(\rho_{0},u_{0})\in C^{\infty}(\mathbb{T}^{3})$ be some initial data satisfying $\rho_{0}\geq \nu>0$ for $x\in (\mathbb{T}^{3})$ for some $\nu>0$, and let the velocity $u\in C([0,T];X_{n})$ be given with the following norm
$$u(x,t)=\sum_{i=1}^{n}\lambda_{i}(t)e_{i}(x),\ \ (t,x)\in [0,T]\times\mathbb{T}^{3}$$
Note that $X_{n}$ is a finite-dimensional space, all the norms are equivalence on $X_{n}$, so $u$ is bounded in $C([0,T];C^{k}(\mathbb{T}^{3}))$ for any $k\in \mathbb{N}$ and there exists a constant $C>0$ depending on $k$ such that
\begin{equation}\label{b1}
\|u\|_{C([0,T];C^{k}(\mathbb{T}^{3}))}\leq C\|u\|_{C([0,T];L^{2}(\mathbb{T}^{3}))}.
\end{equation}
Then we approximate the continuity equation as follows:
\begin{equation}\label{b2}
\begin{aligned}
\left\{\begin{array}{lll}
\partial_{t}\rho+\rm{div}(\rho u)=\varepsilon \triangle\rho,\\
\rho_{0}\in C^{\infty}(\mathbb{T}^{3}),\ \ \rho_{0}\geq \nu >0,
\end{array}\right.
\end{aligned}
\end{equation}
Firstly, to show the well-posedness of the parabolic problem (\ref{b2}), we introduce the following Lemma:
\begin{Lemma}\label{lemma1}\cite {fn2009}
Let $\Omega\in \mathbb{R}^{3}$ be a bounded domain of class $C^{2,\nu},\ \nu\in (0,1)$ and let $u\in C([0,T];X_{n})$ be a given vector field. If the initial data $\rho_{0}\geq \nu>0,\ \rho_{0}\in C^{2}(\mathbb{T}^{3})$, then problem (\ref{b2}) possesses a unique classical solution $\rho=\rho_{u}$, more specifically,
\begin{equation}\label{b3}
\begin{aligned}
\rho_{u}\in V\equiv \left\{\begin{array}{lll}
\rho\in C([0,T];C^{2,\nu}(\mathbb{T}^{3})),\\
\partial_{t}\rho\in C([0,T];C^{0,\nu}(\mathbb{T}^{3})).
\end{array}\right\}
\end{aligned}
\end{equation}

\end{Lemma}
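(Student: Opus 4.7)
The plan is to treat \eqref{b2} as a linear parabolic Cauchy problem on the torus with smooth coefficients (since the transport velocity $u$ is given), and then invoke classical parabolic theory to produce a unique classical solution in the stated regularity class.

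First I would rewrite the equation in non-divergence form,
\begin{equation*}
\partial_{t}\rho-\varepsilon\triangle\rho+u\cdot\nabla\rho+(\text{div}\,u)\,\rho=0,\qquad \rho(0,\cdot)=\rho_{0},
\end{equation*}
and note that, because $X_{n}$ is finite dimensional, the equivalence of norms \eqref{b1} yields $u\in C([0,T];C^{k}(\mathbb{T}^{3}))$ for every $k\in\mathbb{N}$; in particular both coefficients $u$ and $\text{div}\,u$ are as smooth in $x$ as desired, and continuous in $t$. The leading operator $\partial_t-\varepsilon\triangle$ is uniformly parabolic, and the torus provides a boundary-free compact setting.

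Next, I would construct the solution in two stages. For existence in an energy class, I would use a standard Faedo-Galerkin scheme in the same basis $\{e_{k}\}$ (or use a mollified fixed-point argument): project the equation onto finite-dimensional subspaces, solve the resulting linear ODE system, and pass to the limit using the a priori bounds obtained by multiplying by $\rho$ and integrating (which controls $\rho$ in $L^{\infty}_{t}L^{2}_{x}\cap L^{2}_{t}H^{1}_{x}$). This gives a weak solution. Uniqueness at this stage is immediate: subtracting two solutions of the linear equation, multiplying by the difference and integrating, one obtains a Gronwall inequality because $\text{div}\,u$ and $u$ are bounded.

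The main technical step is to upgrade this weak solution to the Hölder class $V$ defined in \eqref{b3}. Here I would appeal to the classical Schauder theory for linear parabolic equations (Ladyzhenskaya--Solonnikov--Ural'tseva): since the coefficients $u$ and $\text{div}\,u$ are in $C^{\alpha}$ in time and $C^{\infty}$ in space, and $\rho_{0}\in C^{2,\nu}(\mathbb{T}^{3})$, the Schauder estimates for parabolic Cauchy problems on the torus give $\rho\in C^{1+\nu/2,2+\nu}((0,T)\times\mathbb{T}^{3})$, which implies the membership in $V$. This is the step I expect to be the main obstacle, but it is standard once the coefficients' regularity is confirmed; the only care needed is the continuity in $t$ of the Hölder norms, which follows from the compatibility $\rho_{0}\in C^{2,\nu}$ and the time-continuity of the coefficients.

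Finally, I would record the strict positivity $\rho\geq \nu\exp(-\|\text{div}\,u\|_{L^{\infty}_{t,x}}T)>0$ by applying the parabolic maximum principle to the equation written in non-divergence form (since $\rho_{0}\geq \nu>0$), which will be needed at the later stages of the paper even though it is not explicitly part of the statement of the lemma. The resulting $\rho_{u}\in V$ is the unique classical solution, completing the argument.
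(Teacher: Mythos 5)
The paper does not actually prove this lemma: it is quoted from the cited reference \cite{fn2009}, and in the sequel it is only applied to smooth data $\rho_{0}\in C^{\infty}(\mathbb{T}^{3})$, $\rho_{0}\geq\nu>0$, after which the text bootstraps to $\rho\in C^{1}([0,T];C^{7}(\mathbb{T}^{3}))$ and derives the two-sided bound (\ref{b4}) separately by comparison functions. So there is no in-paper argument to match yours against line by line; your outline --- non-divergence form, energy existence and Gronwall uniqueness for the linear problem, a parabolic regularity upgrade, and the maximum principle for positivity --- is the standard route and is in the spirit of the reference.

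There is, however, one genuine gap in the regularity step. You assert that the coefficients $u$ and ${\rm div}\,u$ are ``$C^{\alpha}$ in time,'' but the hypothesis only gives $u\in C([0,T];X_{n})$, i.e. $u(x,t)=\sum_{i}\lambda_{i}(t)e_{i}(x)$ with $\lambda_{i}$ merely continuous. Classical parabolic Schauder theory in the Ladyzhenskaya--Solonnikov--Ural'tseva form requires the lower-order coefficients to lie in the parabolic H\"older class $C^{\alpha/2,\alpha}$ in $(t,x)$, so the conclusion $\rho\in C^{1+\nu/2,2+\nu}$ does not follow as you state it. This is precisely why the target class $V$ in (\ref{b3}) is anisotropic --- $C([0,T];C^{2,\nu})$ together with $\partial_{t}\rho\in C([0,T];C^{0,\nu})$, rather than a parabolic H\"older space: that weaker conclusion is attainable for coefficients that are only continuous in time, for instance by writing the equation as $\partial_{t}\rho-\varepsilon\triangle\rho=-u\cdot\nabla\rho-({\rm div}\,u)\rho$, invoking the maximal H\"older regularity of the analytic heat semigroup on $\mathbb{T}^{3}$ in $C^{0,\nu}$, and closing a fixed-point or continuation argument in $C([0,T];C^{2,\nu})$. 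You should replace the Schauder citation by such a semigroup (or Duhamel-plus-bootstrap) argument, or else add the hypothesis that $t\mapsto u(\cdot,t)$ is H\"older continuous. A more minor point: to land in $C([0,T];C^{2,\nu})$ one needs $\rho_{0}\in C^{2,\nu}$, not merely $C^{2}$ as the lemma states --- harmless here, since the paper only applies the lemma to $C^{\infty}$ data, but worth recording. The positivity bound you give at the end agrees with (\ref{b4}) and that part is fine.
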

Furthermore, because $u\in C([0,T];X_{n})$ is a given vector field, then by using the bootstrap method and above Lemma, it's easy to prove that the system $(\ref{b2})$ exists an unique classical solution $\rho\in C^{1}([0,T];C^{7}(\mathbb{T}^{3}))$.
Moreover, if $0<\underline{\rho}\leq \rho\leq \overline{\rho}$ and ${\rm div}u\in L^{1}([0,T];L^{\infty}(\mathbb{T}^{3}))$, through the maximum principle it provides $$\rho(x,t)\geq 0.$$

Then if we define $L\rho=\partial_{t}\rho+{\rm div}(\rho u)-\varepsilon\triangle\rho$, by direct calculation we can obtain
\begin{equation*}
\begin{aligned}
&L(\overline{\rho}e^{\int_{0}^{T}\|{\rm div}u\|_{L^{\infty}}dt})=\overline{\rho}e^{\int_{0}^{T}
\|{\rm div}u\|_{L^{\infty}}dt}(\|{\rm div}u\|
_{L^{\infty}}+{\rm div}u)\geq0,\\
&L(\underline{\rho}e^{-\int_{0}^{T}\|{\rm div}u\|_{L^{\infty}}dt})\leq 0,\ \ \ \ L\rho=0,
\end{aligned}
\end{equation*}
which means that $\overline{\rho}e^{\int_{0}^{T}\|{\rm div}u\|_{L^{\infty}}dt}$ and $\underline{\rho}e^{-\int_{0}^{T}\|{\rm div}u\|_{L^{\infty}}dt}$ are super and sub solutions to the equation (\ref{b2}) respectively, for $0<\underline{\rho}\leq \rho\leq \overline{\rho}$. So, by using the comparison principle, we can obtain
\begin{equation}\label{b4}
0<\underline{\rho}e^{-\int_{0}^{T}\|{\rm div}u\|_{L^{\infty}}dt}\leq \rho(x,t)\leq \overline{\rho}e^{\int_{0}^{T}\|{\rm div}u\|_{L^{\infty}}dt},\ \ \forall x\in \mathbb{T}^{3},\ \ t\geq0.
\end{equation}

Next we will show that the solution of the equation (\ref{b2}) depends on the velocity $u$ continuously. Let $\rho_{1},\rho_{2}$ are two solutions with the same initial data, which means
$$\partial_{t}\rho +\rm{div}(\rho_{1}u_{1})=\varepsilon\triangle\rho_{1},\ \  \partial_{t}\rho +\rm{div}(\rho_{2}u_{2})=\varepsilon\triangle\rho_{2}.$$
Subtracting the above two equations, multiplying the result equation with $-\triangle(\rho_{1}-\rho_{2})$ and integrating by parts with respect to $x$ over $\mathbb{T}^{3}$, we have
\begin{equation*}
\begin{aligned}
\sup_{t\in [0,\tau]}\|\rho_{1}-\rho_{2}\|_{H^{1}}\leq \tau C( \rho_{0},\varepsilon,\|u_{1}\|_{L^{1}((0,\tau);W^{1,\infty})},\|u_{1}\|_{L^{1}((0,\tau);W^{1,\infty})})
\|u_{1}-u_{2}\|_{H^{1}},
\end{aligned}
\end{equation*}
moreover, for $u\in C([0,T];X_{n})$ is a given vector field, similarly by using the bootstrap method and compactness analysis, we can prove
\begin{equation}\label{b5}
\begin{aligned}
\|\rho_{1}&-\rho_{2}\|_{C([0,\tau];C^{7}(\mathbb{T}^{3}))}\\
&\leq \tau C( \rho_{0},\varepsilon,\|u_{1}\|_{L^{1}((0,\tau);X_{n})},\|u_{1}\|_{L^{1}((0,\tau);X_{n})})
\|u_{1}-u_{2}\|_{C([0,\tau];X_{n})}.
\end{aligned}
\end{equation}

So if we introduce the operator $\mathcal{S}:C([0,T];X_{n})\rightarrow C([0,T];C^{7}(\mathbb{T}^{3}))$ by $ \mathcal{S}(u)=\rho$, we have the following Proposition
\begin{Proposition}\label{pro1}
If $0<\underline{\rho}\leq \rho\leq \overline{\rho},\ \rho_{0}\in C^{\infty}(\mathbb{T}^{3}),\ u\in C([0,T];X_{n})$, then there exists an operator $\mathcal{S}:C([0,T];X_{n})\rightarrow C([0,T];C^{7}(\mathbb{T}^{3}))$ satisfying
\begin{itemize}
  \item $\rho=\mathcal{S}(\rho)$ is an unique solution to the problem (\ref{b2}).
  \item $0<\underline{\rho}e^{-\int_{0}^{T}\|{\rm div}u\|_{L^{\infty}}dt}\leq \rho(x,t)\leq \overline{\rho}e^{\int_{0}^{T}\|{\rm div}u\|_{L^{\infty}}dt},\ \ \forall x\in \mathbb{T}^{3},\ \ t\geq0.$
  \item $
\|\mathcal{S}(u_{1})-\mathcal{S}(u_{2})\|_{C([0,\tau];C^{7}(\mathbb{T}^{3}))}$\\
$\leq \tau C( \rho_{0},\varepsilon,\|u_{1}\|_{L^{1}((0,\tau);X_{n})},\|u_{1}\|_{L^{1}((0,T\tau);X_{n})})
\|u_{1}-u_{2}\|_{C([0,\tau];X_{n})}$, for any $\tau \in [0,T]$ and $u_{1},u_{2}\in M_{k}=\{u\in C([0,T];X_{n});\|u\|_{C([0,T];X_{n})}\leq k,\ t\in\ [0,T]\}.$
\end{itemize}
\end{Proposition}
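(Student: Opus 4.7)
The plan is to combine three pieces of machinery that have already been set up in this section. First I would establish existence, uniqueness, and smoothness of $\rho = \mathcal{S}(u)$; second, I would obtain the pointwise bounds via a comparison-principle argument; third, I would derive the Lipschitz estimate by subtracting the equations satisfied by $\rho_1=\mathcal{S}(u_1)$ and $\rho_2=\mathcal{S}(u_2)$ and testing with $-\Delta(\rho_1-\rho_2)$.

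For the first bullet, I would invoke Lemma \ref{lemma1} directly with the given $u \in C([0,T];X_n)$ to obtain a unique solution $\rho \in V$ of (\ref{b2}). Because $X_n$ is finite-dimensional, (\ref{b1}) gives $u \in C([0,T];C^k(\mathbb{T}^3))$ for every $k$, so the coefficients of the linear parabolic equation $\partial_t\rho+u\cdot\nabla\rho+\rho\,{\rm div}\,u = \varepsilon\Delta\rho$ are as smooth as we wish. Standard parabolic Schauder/bootstrap arguments then lift the spatial regularity of $\rho$ from $C^{2,\nu}$ to $C^7$ (indeed to any $C^k$), so $\mathcal{S}$ maps into $C([0,T];C^7(\mathbb{T}^3))$.

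For the second bullet I would follow the hint provided in the excerpt: writing $L\rho := \partial_t\rho+{\rm div}(\rho u)-\varepsilon\Delta\rho$, the functions $\overline{\rho}\,e^{\int_0^t\|{\rm div}\,u\|_{L^\infty}\,ds}$ and $\underline{\rho}\,e^{-\int_0^t\|{\rm div}\,u\|_{L^\infty}\,ds}$ are checked by direct computation to be a super- and sub-solution, respectively, that sandwich the initial datum. Since $u$ is smooth, ${\rm div}\,u\in L^1(0,T;L^\infty)$, so the parabolic comparison principle applies and yields the two-sided bound (\ref{b4}), in particular strict positivity for all $t\in[0,T]$.

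For the Lipschitz estimate, set $w=\rho_1-\rho_2$ and $v=u_1-u_2$; then $w(0)=0$ and
\begin{equation*}
\partial_t w + {\rm div}(w\,u_2) + {\rm div}(\rho_1 v) = \varepsilon\Delta w.
\end{equation*}
Multiplying by $-\Delta w$, integrating by parts over $\mathbb{T}^3$, and using the pointwise bounds on $\rho_1$ together with $\|u_i\|_{C^k}\lesssim \|u_i\|_{X_n}$ from (\ref{b1}), I get a differential inequality for $\|\nabla w\|_{L^2}^2$ whose forcing is controlled by $\|v\|_{H^1}^2$ times a constant depending on $\rho_0,\varepsilon$ and $\|u_i\|_{L^1(0,\tau;X_n)}$. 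Gronwall, together with the vanishing initial condition, produces the $H^1$ Lipschitz estimate recorded in the text; one more parabolic-bootstrap step, again exploiting the full smoothness of $u_1,u_2\in X_n$ and the $L^\infty$ bounds already proved, upgrades this to the required $C([0,\tau];C^7)$ estimate uniformly on $M_k$.

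The main technical obstacle is the uniformity of the bootstrap in the third step: one must verify that the Schauder constants appearing in the successive regularity upgrades depend only on $\rho_0,\varepsilon$ and the $M_k$-bound of $u_1,u_2$, not on $w$ itself; this is where the equivalence of all norms on the finite-dimensional space $X_n$ is essential. Everything else is a routine application of linear parabolic theory and the comparison principle.
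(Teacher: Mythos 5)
Your proposal is correct and follows essentially the same route as the paper: existence and $C^7$ regularity via Lemma \ref{lemma1} plus a parabolic bootstrap, the two-sided bound via the comparison principle with the super- and sub-solutions $\overline{\rho}e^{\int_0^t\|{\rm div}u\|_{L^{\infty}}ds}$ and $\underline{\rho}e^{-\int_0^t\|{\rm div}u\|_{L^{\infty}}ds}$, and the Lipschitz estimate by subtracting the two equations, testing with $-\triangle(\rho_1-\rho_2)$, applying Gronwall, and bootstrapping to $C^7$. Your explicit attention to the uniformity of the Schauder constants over $M_k$ is a point the paper leaves implicit.
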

\begin{Remark}\label{re1}
The proposition \ref{pro1} shows the operator $\mathcal{S}$ is also Lipschitz continuous for sufficient small time $t$.
\end{Remark}
\subsection{Faedo-Galerkin approximation}
Next we wish to solve the momentum equation on the space $X_{n}$ by using the Faedo-Galerkin approximation method. To this end, for given $\rho=\mathcal{S}(u)$, we are looking for a approximate solution $u_{n}\in C([0,T];X_{n})$ satisfying
\begin{equation}\label{b6}
\begin{aligned}
&\int_{\mathbb{T}^{3}}\rho u_{n}(T)\varphi dx-\int_{\mathbb{T}^{3}} m_{0}\varphi dx+\mu\int_{0}^{T}\int_{\mathbb{T}^{3}} \triangle u_{n}\cdot\triangle \varphi dxdt\\
&-\int_{0}^{T}\int_{\mathbb{T}^{3}} (\rho u_{n}\otimes u_{n})\cdot\nabla \varphi dxdt-\int_{0}^{T}\int_{\mathbb{T}^{3}} \rho \mathbb{D}u_{n}\cdot\nabla \varphi dxdt-\int_{0}^{T}\int_{\mathbb{T}^{3}} P(\rho){\rm div} \varphi dxdt\\
&+\eta\int_{0}^{T}\int_{\mathbb{T}^{3}} \rho^{-6}\rm {div}\varphi dxdt+\varepsilon\int_{0}^{T}\int_{\mathbb{T}^{3}} \nabla\rho\cdot\nabla u_{n}\varphi dxdt+r_{0}\int_{0}^{T}\int_{\mathbb{T}^{3}} u_{n}\varphi dxdt\\
&+r_{1}\int_{0}^{T}\int_{\mathbb{T}^{3}} \rho |u_{n}|u_{n} \varphi dxdt-\delta\int_{0}^{T}\int_{\mathbb{T}^{3}} \rho \nabla\triangle^{3} \rho\varphi dxdt=\int_{0}^{T}\int_{\mathbb{T}^{3}} \rho \nabla\Phi \varphi dxdt,
\end{aligned}
\end{equation}
for any test function $\varphi\in X_{n}$. The extra term $\mu \triangle^{2}u_{n}$ is not only necessary to extend the local solution obtained by the fixed point theorem to a global one at the Gerlakin level but also to make sure $\partial_{t}(\frac{\nabla\rho}{\rho})\in L^{2}((0,T);L^{2})$ so that it can be taken as a test function when we compute the B-D entropy at the next level, and the extra terms $\eta\nabla\rho^{-6}$ and $\delta\rho\nabla\triangle^{3}\rho$ are also necessary to keep the density bounded, and bounded away from below with a positive constant for all the time, this enables us to take $\frac{\nabla\rho}{\rho}$ as a test function to derive the B-D entropy, and the term $r_{0}u_{n}$ is used to control the density near the vacuum, $\rho|u_{n}|u_{n}$ is used to make sure that $\sqrt{\rho}u$ is strong convergence in $L^{2}(0,T;L^{2}(\mathbb{T}^{3}))$ at the last approximation level.

To solve (\ref{b6}), we follow the same arguments as in \cite{feireisl2004,fnp2001,jungel2010}, and introduce the following operators, giving a function $\rho\in L^{1}(\mathbb{T}^{3})$ with $\rho >\underline{\rho}>0$:
$$\mathcal{M}[\rho]:X_{n}\rightarrow X_{n}^{*},\ \ <\mathcal{M}[\rho] u,v>=\int_{\mathbb{T}^{3}}\rho u\cdot vdx,\ \ u,\ v\in X_{n}.$$
Similarly in \cite{fnp2001}, it's easy to check that the operator $M[\rho]$ satisfies the following properties:
\begin{itemize}
  \item $\|\mathcal{M}[\rho]\|_{\mathcal{L}(X_{n},X_{n}^{*})}\leq C(n)\|\rho\|_{L^{1}}.$
  \item $\|\mathcal{M}[\rho]\|_{\mathcal{L}(X_{n},X_{n}^{*})}\geq \inf_{x\in\mathbb{T}^{3}}\rho$
  \item If $\inf_{x\in\mathbb{T}^{3}}\rho\geq \underline{\rho}>0$, then the operator is invertible with
      $$\|\mathcal{M}^{-1}[\rho]\|_{\mathcal{L}(X_{n}^{*},X_{n})}\leq \underline{\rho}^{-1},$$
      where $\mathcal{L}(X_{n}^{*},X_{n})$ is the set of bounded liner mappings from $X_{n}^{*}$ to $X_{n}$.
  \item $\mathcal{M}^{-1}[\rho]$ is Lipschitz continuous in the sense
  $$\|\mathcal{M}^{-1}[\rho_{1}]-\mathcal{M}^{-1}[\rho_{2}]\|_{\mathcal{L}(X_{n}^{*},X_{n})}\leq C(n,\underline{\rho})\|\rho_{1}-\rho_{2}\|_{L^{1}(\mathbb{T}^{3})}$$
  for all $\rho_{1},\rho_{2}\in L^{1}(\mathbb{T}^{3})$ such that $\rho_{1},\rho_{2}\geq \underline{\rho}>0$.
\end{itemize}
\begin{proof}
Here, we omit the proof, for more details, we refer the readers to \cite{feireisl2004,fnp2001,jungel2010}.
\end{proof}

Then by using the operators $\mathcal{M}[\rho]$ and $\rho=\mathcal{S}(u_{n})$, we rewrite (\ref{b6}) as the following fixed-point problem
\begin{equation}\label{b7}
\begin{aligned}
u_{n}(t)=\mathcal{M}^{-1}[(\mathcal{S}(u_{n})(t)]\big(\mathcal{M}[\rho_{0}](u_{0})
+\int_{0}^{T}\mathcal{N}
(\mathcal{S}(u_{n}),u_{n})(s)ds\big),
\end{aligned}
\end{equation}
where
\begin{equation*}
\begin{aligned}
\mathcal{N}
(\mathcal{S}(u_{n}),u_{n})(s)&=\rho \nabla\Phi-{\rm div}(\rho u_{n}\otimes u_{n})+{\rm div}(\rho \mathbb{D}u_{n})-\mu \triangle^{2}u_{n}-\varepsilon\nabla \rho\cdot\nabla u_{n}\\
&-\nabla P(\rho)+\eta\nabla\rho^{-6}-r_{0}u_{n}-r_{1}\rho|u_{n}|u_{n}+\delta\rho\nabla\triangle^{3}\rho.
\end{aligned}
\end{equation*}
Thanks to the Lipschitz continuous estimates for $\mathcal{S}$ and $\mathcal{M}^{-1}$, this equation can be solved by using the fixed-point theorem of Banach for a short time $[0,T^{'}]$, where $T^{'}\leq T$, in the space $C([0,T];X_{n})$. Thus there exists a unique local-in-time solution $(\rho_{n},u_{n},\Phi(\rho_{n}))$ to (\ref{b2}) and (\ref{b6}). Next we will extend this obtained local solution to be a global one.

Differentiating $(\ref{b6})$ with respect to time $t$, taking $\phi=u_{n}$ and integrating by parts with respect to $x$ over $\mathbb{T}^{3}$, we have the following energy estimate
\begin{equation}\label{b8}
\begin{aligned}
&\frac{1}{2}\frac{d}{dt}\int\rho u_{n}^{2}+\frac{1}{2}\int \rho_{t}|u_{n}|^{2}+\int{\rm div}(\rho u_{n})|u_{n}|^{2}+\int\rho u_{n}\cdot\nabla u_{n}:u_{n}\\
&+\int\nabla P(\rho)\cdot u_{n}-\eta\int\nabla \rho^{-6}\cdot u_{n}+\varepsilon\int \nabla\rho\cdot \nabla u_{n}\cdot u_{n}+\int \rho |\mathbb{D}u_{n}|^{2}+r_{0}\int|u_{n}|^{2}\\
&+r_{1}\int \rho|u_{n}|^{3}+\delta\int {\rm div}(\rho u_{n})\triangle^{3}\rho+\mu\int |\triangle u_{n}|^{2}=-\int {\rm div}(\rho u_{n})\Phi,
\end{aligned}
\end{equation}
firstly, we estimate the terms on the left hand side one by one:
\begin{equation}\label{b9}
\begin{aligned}
&\frac{1}{2}\int \rho_{t}|u_{n}|^{2}+\int{\rm div}(\rho u)|u_{n}|^{2}+\int\rho u_{n}\cdot\nabla u_{n}:u_{n}\\
&=\frac{1}{2}\int [\varepsilon\triangle \rho-{\rm div}(\rho u_{n})]|u_{n}|^{2}+\int{\rm div}(\rho u)|u_{n}|^{2}+\int \rho u_{n}^{i}\partial_{i}u_{n}^{j}u_{n}^{j}\\
&=\frac{1}{2}\int{\rm div}(\rho u_{n})|u_{n}|^{2}-\varepsilon\int \nabla\rho\cdot \nabla u_{n}\cdot u_{n}-\frac{1}{2}\int{\rm div}(\rho u_{n})|u_{n}|^{2}\\
&=-\varepsilon\int \nabla\rho\cdot \nabla u_{n}\cdot
u_{n},
\end{aligned}
\end{equation}
where we used the approximate mass equation (\ref{b2}).
\begin{equation}\label{b10}
\begin{aligned}
\int \nabla P(\rho) \cdot u_{n}&=\int \nabla\int_{1}^{\rho}\frac{P^{'}(s)}{s}ds\  (\rho u_{n})dx=-\int \int_{1}^{\rho}\frac{P^{'}(s)}{s}ds[\varepsilon\triangle \rho-\rho_{t}]dx\\
&=\frac{d}{dt}\int \Pi(\rho)dx+\varepsilon\int \frac{P^{'}(\rho)}{\rho}|\nabla \rho|^{2},
\end{aligned}
\end{equation}
where we used (\ref{b2}) and integration by parts, and $\Pi(\rho)=\rho\int_{1}^{\rho}\frac{P(s)}{s^{2}}ds$.
Next we will deal with the cold pressure and high order derivative of the density terms as follows
\begin{equation}\label{b11}
\begin{aligned}
-\eta\int \nabla\rho^{-6}\cdot u_{n}&=-\frac{6}{7}\eta\int \rho u_{n}\cdot\nabla \rho^{-7}=\frac{6}{7}\eta\int \rho^{-7}[\varepsilon\triangle\rho-\rho_{t}]\\
&=\frac{1}{7}\eta\frac{d}{dt}\int\rho^{-6}+\frac{2}{3}
\eta\varepsilon\int |\nabla\rho^{-3}|^{2},
\end{aligned}
\end{equation}
\begin{equation}\label{b12}
\begin{aligned}
\delta\int {\rm div}(\rho u_{n})\triangle^{3}\rho&=\delta\int [\varepsilon\triangle\rho-\rho_{t}]\triangle^{3}\rho\\
&=\frac{\delta}{2}\frac{d}{dt}\int|\nabla
\triangle\rho|^{2}+\delta\varepsilon\int|\triangle^{2}\rho|^{2},
\end{aligned}
\end{equation}
finally, we will estimate the poisson term on the right hand side
\begin{equation}\label{b13}
\begin{aligned}
-\int{\rm div}(\rho u_{n})\Phi&=-\int [\varepsilon\triangle\rho-\rho_{t}]\Phi=\int \partial_{t}(\rho -1)\Phi-\varepsilon\int \triangle (\rho-1)\Phi\\
&=-\int\frac{1}{4\pi G}\partial_{t}\triangle\Phi\cdot \Phi-\varepsilon\int (\rho-1)\triangle \Phi\\
&=\frac{1}{8\pi G}\frac{d}{dt}\int |\nabla\Phi|^{2}+\varepsilon\int 4\pi G(\rho -1)^{2},
\end{aligned}
\end{equation}
where we used  the equation $(\ref{a1})_{3}$.

Then substituting (\ref{b9})-(\ref{b13}) into (\ref{b8}) and integrating the result equation with respect to $t$ over $[0,T]$, yields
\begin{equation}\label{b14}
\begin{aligned}
&E(t)+\varepsilon\int_{0}^{T}\int\frac{P^{'}(\rho)}{\rho}|\nabla \rho|^{2}+\frac{2}{3}\eta\varepsilon\int_{0}^{T}\int|\nabla \rho^{-3}|^{2}+\int_{0}^{T}\int\rho |\mathbb{D}u_{n}|^{2}\\
&\ \ +r_{0}\int_{0}^{T}\int|u_{n}|^{2}+r_{1}\int_{0}^{T}\int\rho|u_{n}|^{3}
+\mu\int_{0}^{T}\int|\triangle u_{n}|^{2}+\delta\varepsilon\int_{0}^{T}\int|\triangle^{2}\rho|^{2}\\
&=4\pi G\varepsilon\int_{0}^{T}\int(\rho-1)^{2}+E_{0},
\end{aligned}
\end{equation}
where
 $$E(t)=\int_{\mathbb{T}^{3}}( \frac{1}{2}\rho u_{n}^{2}+{\Pi}(\rho)+\frac{\eta}{7}\rho^{-6}-\frac{1}{8\pi G}|\nabla \Phi(\rho)|^{2}+\frac{\delta}{2}|\nabla\triangle\rho|^{2})dx, \ {\Pi}(\rho)=\rho\int_{1}^{\rho}\frac{P(s)}{s^{2}}ds$$
and
$$E_{0}=\int_{\mathbb{T}^{3}}( \frac{1}{2}\rho_{0} u_{n}^{2}+\Pi(\rho_{0})+\frac{\eta}{7}\rho_{0}^{-6}-\frac{1}{8\pi G}|\nabla \Phi(\rho_{0})|^{2}+\frac{\delta}{2}|\nabla\triangle\rho_{0}|^{2})dx.$$
Moreover, because of (\ref{a3}), we have
\begin{equation}\label{b15}
\begin{aligned}
\Pi(\rho)=\rho\int_{1}^{\rho}\frac{P(s)}{s^{2}}ds\geq \rho\int_{1}^{\rho}\frac{\frac{1}{a\gamma}s^{\gamma}-bs}{s^{2}}ds=\frac{1}{a\gamma(\gamma-1)}
[\rho^{\gamma}-\rho]-b\rho\log\rho,
\end{aligned}
\end{equation}
and
\begin{equation}\label{b16}
\begin{aligned}
\varepsilon\int_{0}^{T}\int\frac{P^{'}(\rho)}{\rho}|\nabla\rho|^{2}&\geq \varepsilon\int_{0}^{T}\int\frac{\frac{1}{a}\rho^{\gamma-1}-b}{\rho}|\nabla\rho|^{2}\\
&=\frac{4\varepsilon}
{a\gamma^{2}}\int_{0}^{T}\int|\nabla\rho^{\frac{\gamma}{2}}|^{2}-b\varepsilon\int_{0}^{T}\int
\frac{1}{\rho}|\nabla \rho|^{2},
\end{aligned}
\end{equation}
furthermore, if $\gamma>\frac{4}{3}$, we also have
\begin{equation}\label{b17}
\begin{aligned}
\frac{1}{8\pi G}\int |\nabla\Phi|^{2}&\leq C\|\nabla\Phi\|_{L^{2}}^{2}\leq C\|\nabla^{2}\Phi\|_{L^{\frac{6}{5}}}^{2}\leq C\|\rho-1\|_{L^{\frac{6}{5}}}^{2}\\
&\leq C(1+\|\rho\|_{L^{\frac{6}{5}}}^{2})\leq C(1+\|\rho\|_{L^{1}}^{\frac{5\gamma-6}{3(\gamma-1)}}\|\rho\|_{L^{\gamma}}^{\frac{\gamma}{3(\gamma
-1)}})\leq C+\varsigma\|\rho\|_{L^{\gamma}}^{\gamma},
\end{aligned}
\end{equation}
where $0<\varsigma\ll 1$ is a fixed constant, $C$ is a generic positive constant independent of $\varepsilon,\eta,\delta, r_{0}$, and we also used conservation of mass, Sobolev inequality, Young inequality.

Then substituting (\ref{b15})-(\ref{b17}) into (\ref{b14}), we have
\begin{equation}\label{b18}
\begin{aligned}
&\int\frac{1}{2}\rho u_{n}^{2}+\frac{1}{a\gamma(\gamma-1)}\rho^{\gamma}+\frac{\eta}{7}\rho^{-6}+\frac{\delta}{2}|\nabla
\triangle\rho|^{2}dx+\frac{4\varepsilon}{a\gamma^{2}}\int_{0}^{T}\int|\nabla\rho^{\frac{\gamma}{2}}
|^{2}\\
&+\frac{2}{3}\eta\varepsilon\int_{0}^{T}\int|\nabla \rho^{-3}|^{2}+\int_{0}^{T}\int\rho |\mathbb{D}u_{n}|^{2}+r_{0}\int_{0}^{T}\int|u_{n}|^{2}+r_{1}\int_{0}^{T}\int\rho|u_{n}|^{3}
\\
&+\mu\int_{0}^{T}\int|\triangle u_{n}|^{2}+\delta\varepsilon\int_{0}^{T}\int|\triangle^{2}\rho|^{2}\\
&=4\pi G\varepsilon\int_{0}^{T}\int(\rho-1)^{2}+\int\frac{1}{\a\gamma(\gamma-1)}\rho+b\int\rho\log\rho
+C+\varsigma\|\rho\|_{L^{\gamma}}^{\gamma}\\
&+b\varepsilon\int\frac{1}{\rho}|\nabla\rho|^{2}\leq \varsigma^{'}\|\rho\|_{L^{\gamma}}^{\gamma}+C+C\varepsilon\int_{0}^{T}\int \rho^{2}+b\varepsilon\int\frac{1}{\rho}
|\nabla \rho|^{2},
\end{aligned}
\end{equation}
where $\varsigma^{'}$ is a  sufficient small positive constant, $C$ is a generic positive constant only depending on the initial data and $T$.

Because
\begin{equation}\label{b19}
\begin{aligned}
C\varepsilon\int_{0}^{T}\int \rho^{2}\leq C\varepsilon\int_{0}^{T}\|\rho\|_{L^{1}}^{\frac{4}{3}}\|\nabla^{3}\rho\|_{L^{2}}^{\frac{2}{3}}dt\leq C+C\frac{\varepsilon}{\delta}\int_{0}^{T}\delta\|\nabla^{3}\rho\|_{L^{2}}^{2}dt
\end{aligned}
\end{equation}
and
\begin{equation}\label{b20}
\begin{aligned}
b\varepsilon&\int_{0}^{T}\int\frac{1}{\rho}|\nabla\rho|^{2}\leq C\varepsilon\int_{0}^{T}\|\rho^{-1}\|_{L^{6}}\|(\nabla\rho)^{2}\|_{L^{\frac{6}{5}}}dt\\
&\leq C\varepsilon\int_{0}^{T}\|\rho^{-1}\|_{L^{6}}\|\rho\|_{L^{1}}^{\frac{11}{9}}\|\nabla^{3}\rho\|
_{L^{2}}^{\frac{7}{9}}dt\\
&\leq C+\frac{C\varepsilon}{\eta}\int_{0}^{T}\eta\|\rho^{-1}\|_{L^{6}}^{6}dt+\frac{C\varepsilon}{\delta}
\int_{0}^{T}\delta\|\nabla^{3}\rho\|_{L^{2}}^{2}dt,
\end{aligned}
\end{equation}
then substituting (\ref{b19})-(\ref{b20}) into (\ref{b18}) and using the Gronwall inequality gives
\begin{equation}\label{b21}
\begin{aligned}
\int\eta\rho^{-6}+\delta|\nabla\triangle\rho|^{2}dx\leq C+C(\frac{\varepsilon}{\delta}+\frac{\varepsilon}{\eta})Te^{C(\frac{\varepsilon}{\delta}
+\frac{\varepsilon}{\eta})T}.
\end{aligned}
\end{equation}
In combination with (\ref{b18}) and (\ref{b21}), we have the following energy inequality
\begin{equation}\label{b22}
\begin{aligned}
&\int\frac{1}{2}\rho u_{n}^{2}+\frac{1}{a\gamma(\gamma-1)}\rho^{\gamma}+\frac{\eta}{7}\rho^{-6}+\frac{\delta}{2}|\nabla
\triangle\rho|^{2}dx+\frac{4\varepsilon}{a\gamma^{2}}\int_{0}^{T}\int|\nabla\rho^{\frac{\gamma}{2}}
|^{2}\\
&\ \ +\frac{2}{3}\eta\varepsilon\int_{0}^{T}\int|\nabla \rho^{-3}|^{2}+\int_{0}^{T}\int\rho |\mathbb{D}u_{n}|^{2}+r_{0}\int_{0}^{T}\int|u_{n}|^{2}+r_{1}\int_{0}^{T}\int\rho|u_{n}|^{3}
\\
&\ \ \ \ \ \ \ \ \ \ +\mu\int_{0}^{T}\int|\triangle u_{n}|^{2}+\delta\varepsilon\int_{0}^{T}\int|\triangle^{2}\rho|^{2}\\
&\leq C+C(\frac{\varepsilon}{\delta}+\frac{\varepsilon}{\eta})[1+(\frac{\varepsilon}{\delta}
+\frac{\varepsilon}{\eta})Te^{C(\frac{\varepsilon}{\delta}
+\frac{\varepsilon}{\eta})T}]T
\end{aligned}
\end{equation}
So the energy inequality (\ref{b22}) yields
$$\int_{0}^{T}\|\triangle u_{n}\|_{L^{2}}^{2}dt\leq C(\varepsilon,\eta,\delta)<+\infty,$$
where $C(\varepsilon,\eta,\delta)$ denotes a positive constant especially depending on $\varepsilon,\eta,\delta$ but independent of $n$, and due to dim$X_{n}<\infty$ and (\ref{b4}), then the density is bounded and bounded away from blow with a positive constant, which means there exists a constant $c>0$ such that
\begin{equation}\label{b23}
0<\frac{1}{c}\leq \rho_{n}\leq c,
\end{equation}
for all $t\in [0,T^{*})$. Furthermore, the energy inequality also gives us
\begin{equation}\label{b24}
\sup_{t\in(0,T^{*})}\int\rho_{n}u_{n}^{2}\leq C<\infty,
\end{equation}
which together with (\ref{b23}) and (\ref{b24}), implies
$$\sup_{t\in(0,T^{*})}\int\|u_{n}\|_{L^{\infty}}\leq C<\infty,$$
where we used the fact that all the norms are equivalence on $X_{n}$. Then we can repeat above argument many times and use the compactness analysis, we can obtain $u_{n}\in C([0,T];X_{n})$, so we can extend $T^{*}$ to $T$. Thus there exists a global solution $(\rho_{n},u_{n},\Phi(\rho_{n}))$ to (\ref{b2}), (\ref{b6}) for any time $T$.

To conclude this part, we have the following proposition on the approximate solutions $(\rho_{n},u_{n},\Phi(\rho_{n}))$:
\begin{Proposition}\label{pro2}
Let $(\rho_{n},u_{n},\Phi(\rho_{n}))$ be the solutions of (\ref{b2}), (\ref{b6}) on $(0,T)\times \mathbb{T}^{3}$ constructed above, then the solutions must satisfy the energy inequality (\ref{b22}). In particular, we have the following estimates
\begin{equation}\label{b25}
\begin{aligned}
&\sqrt{\rho_{n}}u_{n}\in L^{\infty}(0,T;L^{2}),\sqrt{\rho}\mathbb{D}u_{n}\in L^{2}(0,T;L^{2}),\sqrt{\mu}\triangle u_{n}\in L^{2}(0,T;L^{2}),\\
&\rho_{n}\in L^{\infty}(0,T;L^{1}\cap L^{\gamma}),\sqrt{\varepsilon}\nabla \rho_{n}^{\frac{\gamma}{2}}\in L^{2}(0,T;L^{2}),\eta\rho_{n}^{-6}\in L^{\infty}(0,T;L^{1}),\\
&\sqrt{\varepsilon\eta}\nabla\rho_{n}^{-3}\in L^{2}(0,T;L^{2}),\sqrt{\delta}\rho_{n}\in L^{\infty}(0,T;H^{3}),\sqrt{\delta\varepsilon}\rho_{n}\in L^{2}(0,T;H^{4}),\\
&\sqrt{r_{0}}u_{n}\in L^{2}(0,T;L^{2}),\rho_{n}^{\frac{1}{3}}u_{n}\in L^{3}(0,T;L^{3}).
\end{aligned}
\end{equation}

\end{Proposition}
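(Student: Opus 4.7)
The plan is to take $\varphi = u_n$ as the test function in the Galerkin identity (\ref{b6}), differentiate in time, and derive a single energy identity term-by-term. First I would combine the time-derivative terms with the convective terms: using the approximate mass equation (\ref{b2}), the sum $\frac{1}{2}\int \rho_t |u_n|^2 + \int\mathrm{div}(\rho u_n)|u_n|^2 + \int \rho u_n\cdot\nabla u_n\cdot u_n$ collapses to just $-\varepsilon\int \nabla\rho\cdot\nabla u_n\cdot u_n$, which cancels precisely the numerical-diffusion term $\varepsilon\int\nabla\rho\cdot\nabla u_n\cdot u_n$ on the left-hand side. This is the standard cancellation at the $\varepsilon$-level and is the reason the scheme is well-designed.

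Next I would process the four ``structural'' terms one by one, each time using (\ref{b2}) to exchange a time-derivative of $\rho$ for an $\varepsilon\Delta\rho$ minus a flux. The pressure term yields $\frac{d}{dt}\int\Pi(\rho) + \varepsilon\int \frac{P'(\rho)}{\rho}|\nabla\rho|^2$; the cold-pressure term $-\eta\int\nabla\rho^{-6}\cdot u_n$ produces $\frac{\eta}{7}\frac{d}{dt}\int\rho^{-6} + \frac{2}{3}\eta\varepsilon\int|\nabla\rho^{-3}|^2$; the capillarity term $\delta\int \mathrm{div}(\rho u_n)\Delta^3\rho$ becomes $\frac{\delta}{2}\frac{d}{dt}\int|\nabla\Delta\rho|^2 + \delta\varepsilon\int|\Delta^2\rho|^2$; and the Poisson term $-\int\mathrm{div}(\rho u_n)\Phi$ is rewritten via $(\ref{a1})_3$ to give $\frac{1}{8\pi G}\frac{d}{dt}\int|\nabla\Phi|^2 + 4\pi G\varepsilon\int(\rho-1)^2$. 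Collecting everything yields the identity (\ref{b14}) and hence the energy functional $E(t)$ stated there.

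The main obstacle is that $E(t)$ contains the Poisson term with the \emph{wrong sign} $-\frac{1}{8\pi G}\int|\nabla\Phi|^2$, and the right-hand side contains the parasitic dissipation-type terms $\varepsilon\int(\rho-1)^2$, $\varepsilon\int\frac{|\nabla\rho|^2}{\rho}$ (from the $b$-part of the pressure bound) and a $b\int\rho\log\rho$ term. I would absorb the negative Poisson contribution using $\|\nabla\Phi\|_{L^2}^2 \lesssim \|\rho\|_{L^{6/5}}^2$ together with the interpolation $\|\rho\|_{L^{6/5}} \lesssim \|\rho\|_{L^1}^{(5\gamma-6)/(3(\gamma-1))}\|\rho\|_{L^\gamma}^{\gamma/(3(\gamma-1))}$, which is why the restriction $\gamma > \frac{4}{3}$ enters (it ensures the $L^\gamma$ exponent is strictly less than $\gamma$, so Young's inequality absorbs it into $\Pi(\rho)$ with small coefficient). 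For the $\varepsilon\int\rho^2$ and $b\varepsilon\int |\nabla\rho|^2/\rho$ terms I would invoke Gagliardo--Nirenberg (Lemma \ref{le1}) with the triple of norms $\|\rho\|_{L^1}$, $\|\nabla^3\rho\|_{L^2}$ (controlled by $\sqrt{\delta}\rho \in L^\infty(0,T;H^3)$) and $\|\rho^{-1}\|_{L^6}$ (controlled by $\eta\rho^{-6}\in L^\infty(0,T;L^1)$), as carried out in (\ref{b19})--(\ref{b20}); this produces the estimate (\ref{b21}) after a Gronwall step with constants depending on $\varepsilon/\delta$ and $\varepsilon/\eta$.

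Finally, feeding (\ref{b21}) back into (\ref{b18}) gives the full energy inequality (\ref{b22}), and the individual bounds (\ref{b25}) then follow by reading off the coercive quantities on the left-hand side: $\sqrt{\rho_n}u_n$ from $\frac{1}{2}\rho u_n^2$, $\sqrt{\rho}\mathbb{D}u_n$ and $\sqrt{\mu}\Delta u_n$ from their squared-integrals, $\rho_n \in L^\infty_tL^\gamma$ from $\Pi(\rho)$ together with (\ref{b15}), $\sqrt{\varepsilon}\nabla\rho_n^{\gamma/2}$ from the Bresch-type dissipation, the cold-pressure and capillarity bounds directly, and $\rho_n^{1/3}u_n \in L^3_tL^3$ from $r_1\int\rho|u_n|^3$. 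The subtle point worth emphasizing is that these constants blow up as $\varepsilon/\delta$ or $\varepsilon/\eta$ tend to infinity, which dictates the order in which the parameters must later be sent to zero --- first $\varepsilon$ (together with $\mu$), then $\eta$, then $\delta$ --- and justifies the hierarchy of the subsequent sections.
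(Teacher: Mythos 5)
Your proposal reproduces the paper's own argument essentially step for step: testing (\ref{b6}) with $u_n$, the cancellation of the $\varepsilon\int\nabla\rho\cdot\nabla u_n\cdot u_n$ terms via the approximate continuity equation, the identities (\ref{b10})--(\ref{b13}) for the pressure, cold-pressure, capillarity and Poisson contributions, the interpolation absorbing $\|\nabla\Phi\|_{L^2}^2$ under the condition $\gamma>\frac{4}{3}$, the Gagliardo--Nirenberg bounds (\ref{b19})--(\ref{b20}), and the Gronwall step yielding (\ref{b21}) and hence (\ref{b22}). The approach and the key estimates coincide with the paper's, so there is nothing further to compare.
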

\subsection{Passing to the limits as $n\rightarrow\infty$.}We perform first the limit as $n\rightarrow\infty$, $\varepsilon,\eta,\delta,r_{0}>0$ being fixed. Based on the above estimates which are uniform on $n$ and Aubin-Lions Lemma, we have the following compactness results.
\subsubsection{Step1.Convergence of $\rho_{n}$, Pressure $P(\rho_{n})$ and gravitational force $\nabla\Phi(\rho_{n})$.}
\begin{Lemma}\label{l1}
The following estimates hold for any fixed positive constants $\varepsilon,\eta,\delta$ and $r_{0}$:
\begin{equation}\label{b26}
\begin{aligned}
&\|\rho_{n}\|_{L^{\infty}(0,T;H^{3})}+\|\rho_{n}\|_{L^{2}(0,T;H^{4})}\leq K,\ \|\partial_{t}\rho_{n}\|_{L^{2}(0,T;H^{-1})}\leq K,\\
&\|\rho_{n}^{\gamma}\|_{L^{\frac{5}{3}}(0,T;L^{\frac{5}{3}})}\leq K,\ \ \|\rho_{n}^{-6}\|_{L^{\frac{5}{3}}(0,T;L^{\frac{5}{3}})}\leq K,\\
&\|\nabla\Phi(\rho_{n})\|_{C([0,T];L^{2})}\leq C\|\rho-1\|_{C([0,T];L^{\frac{6}{5}})}\leq K
\end{aligned}
\end{equation}
where $K$ is independent of $n$, depends on $\varepsilon,\eta,\delta$,$r_{0}$, initial data and $T$. Moreover, up to an extracted subsequence
\begin{equation}\label{b27}
\begin{aligned}
&\rho_{n}\rightarrow\rho\ \  a.e. \ \ and \ \ strongly\ in\  C([0,T];H^{3}) \ \ ,\\
&P(\rho_{n})\rightarrow P(\rho)\ \  a.e. \ \ and\ \  strongly\ in\  L^{1}(0,T;L^{1})\ \  ,\\
&\rho_{n}^{-6}\rightarrow \rho^{-6}\ \  a.e. \ \ and\ \  strongly\ in\  L^{1}(0,T;L^{1})\ \  ,\\
&\nabla\Phi(\rho_{n})\rightarrow \nabla\Phi(\rho)\ \ strongly\  in\  L^{2}(0,T;L^{2})\ \  .
\end{aligned}
\end{equation}
\end{Lemma}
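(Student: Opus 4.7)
Since we pass to the limit with $\varepsilon,\eta,\delta,r_0,\mu>0$ kept fixed, constants in this step may depend on these parameters but must be independent of $n$. My plan rests on three ingredients: the energy inequality (\ref{b22}), the Aubin--Lions compactness lemma (Lemma \ref{le2}), and a Vitali/Egoroff-type convergence argument (Lemma \ref{le3}).

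First, I would extract all the bounds in (\ref{b26}) directly from (\ref{b22}). The terms $\frac{\delta}{2}|\nabla\triangle\rho_n|^2$ and $\delta\varepsilon|\triangle^2\rho_n|^2$ immediately yield $\rho_n$ bounded in $L^\infty(0,T;H^3)\cap L^2(0,T;H^4)$, once conservation of mass is used to control lower-order norms. The $L^{5/3}(L^{5/3})$ bound on $\rho_n^\gamma$ follows by interpolating between $L^\infty(0,T;L^1)$ and $L^1(0,T;L^3)$, where the latter comes from $\sqrt{\varepsilon}\,\nabla\rho_n^{\gamma/2}\in L^2(L^2)$ plus the Sobolev embedding $H^1\hookrightarrow L^6$. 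The analogous bound on $\rho_n^{-6}$ is obtained the same way, interpolating between $\eta\rho_n^{-6}\in L^\infty(L^1)$ and $L^1(L^3)$, the latter following from $\sqrt{\varepsilon\eta}\,\nabla\rho_n^{-3}\in L^2(L^2)$. For $\partial_t\rho_n$ I would use the approximate continuity equation (\ref{b2}) in the form $\partial_t\rho_n=\varepsilon\triangle\rho_n-{\rm div}(\rho_n u_n)$; the first summand lies in $L^2(0,T;H^2)$, and the second in $L^2(0,T;L^2)$ after combining $u_n\in L^2(0,T;H^2)$ (from $\sqrt{\mu}\triangle u_n\in L^2(L^2)$ plus Poincar\'e) with the $L^\infty(L^\infty)$ control on $\rho_n$ coming from $H^3\hookrightarrow L^\infty$. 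The bound on $\nabla\Phi(\rho_n)$ is exactly the chain already carried out in (\ref{b17}).

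Second, for the strong convergence of $\rho_n$, I would apply Lemma \ref{le2} with $X_0=H^4$, $X=H^3$, $X_1=H^{-1}$, obtaining a subsequence converging strongly in $L^2(0,T;H^3)$. To upgrade to $C([0,T];H^3)$ I combine the uniform $L^\infty(0,T;H^3)$ bound with the time-equicontinuity $\|\rho_n(t)-\rho_n(s)\|_{L^2}\leq C|t-s|^{1/2}$ that follows from $\partial_t\rho_n\in L^2(0,T;L^2)$, and invoke an Arzel\`a--Ascoli argument valued in $H^3$ endowed with the weak topology, together with interpolation against the uniform top-order bound. Pointwise a.e. convergence of $\rho_n$ then follows from $H^3\hookrightarrow C^0(\mathbb{T}^3)$.

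Third, I pass to the limit in the nonlinear functions of $\rho_n$. For $P(\rho_n)\to P(\rho)$, I combine the a.e. convergence with the uniform $L^\infty(L^\infty)$ bound and the continuity of $P$, and invoke Lemma \ref{le3}. For $\rho_n^{-6}\to\rho^{-6}$, Fatou's lemma applied to $\int\rho_n^{-6}\leq K$ first forces $\rho>0$ a.e., giving a.e. convergence; then Lemma \ref{le3} combined with the uniform $L^{5/3}(L^{5/3})$ bound delivers strong convergence in $L^1(L^1)$. For the gravitational potential, linearity of the Poisson equation together with standard elliptic regularity gives $\|\nabla\Phi(\rho_n)-\nabla\Phi(\rho)\|_{L^2}\leq C\|\rho_n-\rho\|_{H^{-1}}$, so the strong convergence of $\rho_n$ from Step~2 closes the argument.

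The main obstacle I anticipate is upgrading the Aubin--Lions output from $L^2(0,T;H^3)$ to the claimed $C([0,T];H^3)$, which requires combining three ingredients --- the $L^\infty(H^3)$ bound, the $L^2(H^4)$ bound, and the time-H\"older continuity from $\partial_t\rho_n\in L^2(L^2)$ --- in the Arzel\`a--Ascoli step. Everything else is routine given the generous parabolic/capillary regularization ($\varepsilon\triangle\rho$, $\eta\nabla\rho^{-6}$, $\delta\rho\nabla\triangle^3\rho$, $\mu\triangle^2 u$) built into the Galerkin approximation.
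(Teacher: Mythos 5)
Your proposal is correct and for the most part follows the paper's own argument: the uniform bounds are read off the energy inequality (\ref{b22}) exactly as you describe (with $\rho_n^{\gamma}$ and $\rho_n^{-6}$ interpolated between $L^{\infty}(0,T;L^{1})$ and $L^{1}(0,T;L^{3})$, the latter coming from the Sobolev embedding applied to $\nabla\rho_n^{\gamma/2}$ and $\nabla\rho_n^{-3}$), and the strong convergences come from Aubin--Lions plus the Egoroff-type Lemma \ref{le3}. There are two genuine points of divergence. First, to get $\rho_n^{-6}\to\rho^{-6}$ a.e.\ you apply Fatou's lemma to $\eta\int\rho_n^{-6}\,dx\le K$ and conclude $\rho>0$ a.e.; the paper instead proves the quantitative uniform lower bound $\|\rho_n^{-1}\|_{L^{\infty}}\le C(\eta,\delta,T)$ by the Gagliardo--Nirenberg computation (\ref{b28})--(\ref{b30}). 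Your route is enough for this lemma, but the pointwise lower bound is part of what Proposition \ref{pro3} records and is what later justifies taking $\frac{\nabla\rho}{\rho}$ as a test function in Section 4 (see (\ref{c1})), so it cannot be bypassed in the overall scheme. Second, you are more careful than the paper about the mode of convergence of $\rho_n$: the paper invokes Aubin--Lions to claim strong convergence in $C([0,T];H^{3})$, but part (ii) of Lemma \ref{le2} would require an $L^{\infty}$-in-time bound in a space \emph{compactly} embedded in $H^{3}$, which is not available; your Arzel\`a--Ascoli/interpolation upgrade honestly yields only $C([0,T];H^{3-s})$ for $s>0$ (together with $L^{2}(0,T;H^{3})$ from part (i)), which is in fact all that the subsequent limit passages actually use.
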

\begin{proof}
By (\ref{b2}), we have
\begin{equation*}
\begin{aligned}
\int_{0}^{T}\int(\rho_{n})_{t}\varphi&=-\varepsilon\int_{0}^{T}\int\nabla \rho_{n}\nabla\varphi+\int_{0}^{T}\int(\rho_{n}u_{n})\nabla\varphi\\
&\leq C\|\nabla\rho_{n}\|_{L^{2}(0,T;L^{2})}\|\nabla\varphi\|_{L^{2}(0,T;L^{2})}\\
&\ \ \ \ \ +C\|\rho_{n}\|
_{L^{\infty}(0,T;L^{\infty})}\|u_{n}\|_{L^{2}(0,T;L^{2})}\|\nabla\varphi\|_{L^{2}(0,T;L^{2})}
\leq C,
\end{aligned}
\end{equation*}
holds for any $\varphi\in {L^{2}(0,T;H^{1})}$, which yields $\partial_{t}\rho_{n}\in {L^{2}(0,T;H^{-1})}$. This together with $\rho_{n}\in L^{\infty}(0,T;H^{3})\cap L^{2}(0,T;H^{4})$, using the Aubin-Lions Lemma, we can claim $\rho_{n}\in C([0,T];H^{3})$, so up to a subsequence, we have\\
$$\rho_{n}\rightarrow\rho\ \ \ strongly\ \ \ in\ C([0,T];H^{3}), \ \ hence,\ \  \rho_{n}\rightarrow\rho\ \  a.e.$$

Next we claim that $\rho_{n}^{\gamma}$ is bounded in $L^{\frac{5}{3}}(0,T;L^{\frac{5}{3}})$.

Notice that $\nabla\rho_{n}^{\frac{\gamma}{2}}$ is bounded in ${L^{2}(0,T;L^{2})}$, using the Sobolev embedding theorem gives us $\rho_{n}^{\gamma}$ is bounded in ${L^{1}(0,T;L^{3})}$, then we apply H$\ddot{o}$lder inequality to have
$$\|\rho_{n}^{\gamma}\|_{L^{\frac{5}{3}}(0,T;L^{\frac{5}{3}})}\leq \|\rho_{n}^{\gamma}\|_{L^{\infty}(0,T;L^{1})}^{\frac{2}{5}}\|\rho_{n}^{\gamma}\|_{L^{1}(0,T;L^{3})}
^{\frac{3}{5}}\leq K.$$
Similarly, we can show $\rho_{n}^{-6}$ is bounded in $L^{\frac{5}{3}}(0,T;L^{\frac{5}{3}})$ too. Moreover, for $\rho_{n}\rightarrow \rho$ a.e., so $\rho_{n}^{\gamma}\rightarrow \rho^{\gamma}$ a.e..
Recall that the pressure satisfies $\frac{1}{a}\rho^{\gamma-1}-b\leq P^{'}\leq a\rho^{\gamma-1}+b$ and $P\in C^{1}(\mathbb{R}^{+})$, integrating this inequality we have
$$|P(\rho_{n})|\leq C(\rho_{n}^{\gamma}+\rho_{n}),$$
it implies that $P(\rho_{n})$ is bounded in $L^{\frac{5}{3}}(0,T;L^{\frac{5}{3}})$ due to $\rho_{n}^{\gamma}$ is bounded in $L^{\frac{5}{3}}(0,T;L^{\frac{5}{3}})$. For $\rho_{n}^{\gamma}\rightarrow \rho^{\gamma}$ a.e., using the Egoroffs theorem, we have
$$ P(\rho_{n})\rightarrow P(\rho)\ \ \ strongly\ \ \ in\ L^{1}(0,T;L^{1}).$$
Next, we show that the density is bounded away from zero with a positive constant for all the time $t\in [0,T]$ by using the Sobolve inequality.

For \begin{equation}\label{b28}
\begin{aligned}
\|\rho_{n}^{-1}\|_{L^{\infty}}\leq \|\rho_{n}^{-1}\|_{L^{6}}^{\frac{1}{2}}\|\nabla^{2}\rho_{n}^{-1}\|_{L^{2}}^{\frac{1}{2}},
\end{aligned}
\end{equation}
\begin{equation}\label{b29}
\begin{aligned}
\|\nabla^{2}\rho_{n}^{-1}\|_{L^{2}}&\leq C(\|\rho_{n}^{-2}\nabla^{2}\rho_{n}\|_{L^{2}}+\|\rho_{n}^{-3}(\nabla\rho_{n})^{2}\|_{L^{2}}\\
&\leq C\|\rho_{n}^{-1}\|_{L^{6}}^{2}(\|\rho_{n}\|_{L^{1}}+\|\nabla^{3}\rho_{n}\|_{L^{2}})+C\|\rho_{n}
^{-1}\|_{L^{6}}^{3}\|\nabla\rho_{n}\|_{L^{\infty}}^{2}\\
&\leq C(1+\|\rho_{n}^{-1}\|_{L^{6}}^{3})(1+\|\nabla^{3}\rho_{n}\|_{L^{2}}^{2}),
\end{aligned}
\end{equation}
substituting (\ref{b29}) into (\ref{b28}), yields
\begin{equation}\label{b30}
\begin{aligned}
\|\rho_{n}^{-1}\|_{L^{\infty}}&\leq C\|\rho_{n}^{-1}\|_{L^{6}}^{\frac{1}{2}}(1+\|\rho_{n}^{-1}\|_{L^{6}})^{\frac{3}{2}}
(1+\|\nabla^{3}\rho_{n}\|_{L^{2}})\\
&\leq C(1+\|\rho_{n}^{-1}\|_{L^{6}})^{2}
(1+\|\nabla^{3}\rho_{n}\|_{L^{2}})\leq C(\eta,\delta,T),
\end{aligned}
\end{equation}
where here the constant $C(\eta,\delta,T)$ depends on $\eta,\delta$ and $T$ but independent of $n$.

So immediately, we have $\frac{1}{\rho_{n}}\rightarrow\frac{1}{\rho}$ a.e., furthermore we show $\rho_{n}^{-6}\rightarrow \rho^{-6}$  a.e., together with $\rho_{n}^{-6}\in L^{\frac{5}{3}}(0,T;L^{\frac{5}{3}})$ and Egoroffs theorem, we have
$$\rho_{n}^{-6}\rightarrow \rho^{-6},\ \ \ strongly\ \ \ in\ L^{1}(0,T;L^{1}).$$

By using the G-N inequality, yields
$$\|\nabla\Phi(\rho_{n})\|_{L^{2}}\leq C\|\nabla^{2}\Phi(\rho_{n})\|_{L^{\frac{6}{5}}}\leq C\|\rho_{n}-1\|_{L^{\frac{6}{5}}},$$
because $\rho_{n}$ convergence to  $\rho$ strongly in $C([0,T];H^{3})$, so $$\nabla\Phi(\rho_{n})\rightarrow \nabla\Phi(\rho) \ \ strongly\ \  in\  L^{2}(0,T;L^{2}).$$

Then the proof of this Lemma is completed.
\end{proof}
\subsubsection{Step2. Convergence of momentum}
\begin{Lemma}\label{l2}
Up to an extracted subsequence
$$\rho_{n}u_{n}\rightarrow\rho u\ \ \ \ a.e.\ \ and \ strongly\ \ in \ \ L^{2}(0,T;L^{2}).$$

\end{Lemma}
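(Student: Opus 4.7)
The plan is to apply the Aubin-Lions Lemma (Lemma~\ref{le2}) to $\rho_n u_n$ at the fixed regularization level $\varepsilon,\mu,\eta,\delta,r_0>0$. First I would establish that $u_n$ is uniformly bounded in $L^2(0,T;H^2(\mathbb{T}^3))$: the energy inequality (\ref{b22}) gives $\sqrt{\mu}\,\triangle u_n\in L^2(0,T;L^2)$, and together with the uniform two-sided pointwise bound $0<c(\eta,\delta,T)\leq \rho_n\leq C(\eta,\delta,T)$ that follows from Lemma~\ref{l1} and estimate (\ref{b30}), combined with $\sqrt{\rho_n}u_n\in L^\infty(0,T;L^2)$, elliptic regularity on $\mathbb{T}^3$ upgrades this to a uniform bound on $u_n$ in $L^2(0,T;H^2)$. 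Since $\rho_n$ is uniformly bounded in $L^\infty(0,T;H^3)\hookrightarrow L^\infty(0,T;W^{1,\infty}(\mathbb{T}^3))$ by Lemma~\ref{l1}, the product rule then yields $\rho_n u_n$ uniformly bounded in $L^2(0,T;H^1(\mathbb{T}^3))$.

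Next, for the time derivative, I would use the approximate momentum equation (\ref{b6}) in its distributional form
\begin{align*}
\partial_t(\rho_n u_n) &= -\operatorname{div}(\rho_n u_n\otimes u_n)-\nabla P(\rho_n)+\operatorname{div}(\rho_n\mathbb{D}u_n)-\mu\triangle^2 u_n+\eta\nabla\rho_n^{-6}\\
&\quad -\varepsilon\nabla\rho_n\cdot\nabla u_n-r_0 u_n-r_1\rho_n|u_n|u_n+\delta\rho_n\nabla\triangle^3\rho_n+\rho_n\nabla\Phi(\rho_n),
\end{align*}
and estimate each term in $L^{q}(0,T;H^{-s}(\mathbb{T}^3))$ for some $q>1$ and some integer $s\geq 4$. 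The top-order artificial terms $\mu\triangle^2 u_n$ and $\delta\rho_n\nabla\triangle^3\rho_n$ are handled using the bounds $u_n\in L^2(H^2)$ and $\rho_n\in L^2(H^4)\cap L^\infty(W^{1,\infty})$ from Proposition~\ref{pro2} and Lemma~\ref{l1}, while the nonlinearities $\rho_n u_n\otimes u_n$ and $\rho_n|u_n|u_n$ are controlled using $\rho_n\in L^\infty(L^\infty)$ together with $\rho_n^{1/3}u_n\in L^3(L^3)$ and $\sqrt{\rho_n}u_n\in L^\infty(L^2)$; the pressure, cold-pressure, Poisson and damping terms are immediate from Lemma~\ref{l1}.

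With $\rho_n u_n$ uniformly bounded in $L^2(0,T;H^1)$ and $\partial_t(\rho_n u_n)$ uniformly bounded in $L^{q}(0,T;H^{-s})$, the compact embedding $H^1(\mathbb{T}^3)\hookrightarrow\hookrightarrow L^2(\mathbb{T}^3)\hookrightarrow H^{-s}(\mathbb{T}^3)$ together with Lemma~\ref{le2} yields a subsequence along which $\rho_n u_n\to \rho^* u^*$ strongly in $L^2(0,T;L^2)$ for some limit, hence also a.e.\ in $(0,T)\times\mathbb{T}^3$ along a further subsequence. The limit is identified as $\rho u$ by combining the strong convergence $\rho_n\to\rho$ in $C([0,T];H^3)$ from Lemma~\ref{l1} with the weak convergence $u_n\rightharpoonup u$ in $L^2(0,T;H^2)$ extracted from the uniform bound. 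The main obstacle is the careful bookkeeping of negative Sobolev orders required to absorb the two highest-derivative regularizations $\mu\triangle^2 u_n$ and $\delta\rho_n\nabla\triangle^3\rho_n$; this is the price paid for the strong approximations introduced at the Faedo-Galerkin level, but since $\varepsilon,\mu,\eta,\delta,r_0$ are all fixed at this stage, the constants are allowed to depend on them and every term on the right of the momentum equation is indeed controllable.
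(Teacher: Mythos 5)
Your proposal is correct and follows essentially the same route as the paper: a uniform $L^{2}(0,T;H^{1})$ bound on $\rho_{n}u_{n}$ from the product rule, a bound on $\partial_{t}(\rho_{n}u_{n})$ in a negative Sobolev space read off from the approximate momentum equation, Aubin--Lions for strong $L^{2}(0,T;L^{2})$ convergence, and identification of the limit as $\rho u$ via the strong convergence of $\rho_{n}$ combined with the weak convergence of $u_{n}$. The only cosmetic difference is that you obtain the $L^{2}(0,T;H^{2})$ bound on $u_{n}$ through elliptic regularity and the positive lower bound on $\rho_{n}$, whereas the paper reads $u_{n}\in L^{2}(0,T;L^{2})$ directly from the $r_{0}$-damping term in the energy inequality; both are valid at this fixed approximation level.
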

\begin{proof}
From the energy estimates, we know that $u_{n}$ is bounded in $L^{2}(0,T;L^{2})$, so up to a subsequence, we have
$$u_{n}\rightharpoonup u\ \ \ \ in\ \ L^{2}(0,T;L^{2}),$$
recall that $\rho_{n}\rightarrow \rho$ strongly in $C([0,T];H^{3})$, so we have
$$\rho_{n}u_{n}\rightharpoonup \rho u\ \ \ \ weakly\ \ in\ \ L^{1}(0,T;L^{1}).$$
Moreover, since $\rho_{n}\in L^{\infty}(0,T;H^{3}), u_{n}\in L^{2}(0,T;H^{2})$, we can show $$\nabla(\rho_{n}u_{n})=\nabla\rho_{n}u_{n}+\rho_{n}\nabla u_{n}\in L^{2}(0,T;L^{2}),$$
together with $\rho_{n}u_{n}\in L^{2}(0,T;L^{2})$, we have $\rho_{n}u_{n}\in L^{2}(0,T;H^{1})$.
Next in order to use the Aubin-Lions Lemma, we only need to prove
$$\partial _{t}(\rho_{n}u_{n})\in L^{2}(0,T;H^{-s}),\ \ \ for\ some\ s>0.$$
Since,
\begin{equation}\label{b31}
\begin{aligned}
\partial_{t}(\rho_{n}u_{n})&=-{\rm div }(\rho_{n}u_{n}\otimes u_{n})-\nabla P(\rho_{n})+\eta\nabla \rho_{n}^{-6}-\mu\triangle^{2}u_{n}+{\rm div}(\rho_{n}\mathbb{D}u_{n})\\
&-r_{0}u_{n}-r_{1}\rho_{n}|u_{n}|u_{n}-\varepsilon\nabla \rho_{n}\cdot\nabla u_{n}+\delta\rho_{n}\nabla\triangle^{3}\rho_{n}+\rho_{n}\nabla\Phi,
\end{aligned}
\end{equation}
based on the energy estimates, it' s easy to check that $\partial_{t}(\rho_{n}u_{n})\in L^{2}(0,T;H^{-3})$, then by using the Aubin-Lions Lemma, we can show
$$\rho_{n}u_{n} \rightarrow g \ \ strongly\ \ in\ L^{2}(0,T;L^{2}),\  for\  some\  function\  g\in L^{2}(0,T;L^{2}),$$
moreover, since $\rho_{n}u_{n}\rightharpoonup \rho u\ \ \ \ weakly\ \ in\ \ L^{1}(0,T;L^{1}),$ so we have $$\rho_{n}u_{n} \rightarrow \rho u\ \ strongly\ \ in\ L^{2}(0,T;L^{2}).$$
 Thus the proof of this Lemma is completed.

\end{proof}
\subsubsection{Step.3 Convergence of nonlinear diffusion terms.}
Let $\varphi \in C^{\infty}_{per}([0,T];\mathbb{T}^{3})$, with  $C^{\infty}_{per}([0,T];\mathbb{T}^{3})$ defined by
$$ C^{\infty}_{per}([0,T];\mathbb{T}^{3})=\{\phi\in C^{\infty}([0,T];\mathbb{T}^{3})|\ \phi \ is\  periodic\  in\  x\}.$$
\begin{equation}\label{b32}
\begin{aligned}
&\int_{0}^{T}\int {\rm div}(\rho_{n}\mathbb{D}u_{n})\varphi=\int_{0}^{T}\int \partial_{i}(\rho_{n}(\frac{\partial_{i}u_{n}^{j}+\partial_{j}u_{n}^{i}}{2}))\varphi\\
&=-\frac{1}{2}\int_{0}^{T}\int \rho_{n}\partial_{i}u_{n}^{j}\partial_{i}\varphi-\frac{1}{2}\int_{0}^{T}\int \rho_{n}\partial_{j}u_{n}^{i}\partial_{i}\varphi\\
&=-\frac{1}{2}\int_{0}^{T}\int \partial_{i}(\rho_{n}u_{n}^{j})\partial_{i}\varphi+\frac{1}{2}\int_{0}^{T}\int\partial_{i}
\rho_{n}u_{n}^{j}\partial_{i}\varphi-\frac{1}{2}\int_{0}^{T}\int \partial_{j}(\rho_{n}u_{n}^{i})\partial_{i}\varphi\\
&\ \ \ \ +\frac{1}{2}\int_{0}^{T}\int\partial_{j}
\rho_{n}u_{n}^{i}\partial_{i}\varphi\\
&=\frac{1}{2}\int_{0}^{T}\int (\rho_{n}u_{n}^{j})\partial_{ii}\varphi+\frac{1}{2}\int_{0}^{T}\int\partial_{i}
\rho_{n}u_{n}^{j}\partial_{i}\varphi+\frac{1}{2}\int_{0}^{T}\int (\rho_{n}u_{n}^{i})\partial_{ij}\varphi\\
&\ \ \ \ +\frac{1}{2}\int_{0}^{T}\int\partial_{j}
\rho_{n}u_{n}^{i}\partial_{i}\varphi,
\end{aligned}
\end{equation}
since $\rho_{n} \rightarrow \rho\ \ strongly\ \ in\ C([0,T];H^{3}),\rho_{n}u_{n} \rightarrow \rho u\ \ strongly\ \ in\ L^{2}(0,T;L^{2}), u_{n}\rightharpoonup u\ \ weakly\ \ in\ L^{2}(0,T;L^{2})$, so we have
$$\int_{0}^{T}\int (\rho_{n}u_{n}^{j})\partial_{ii}\varphi\rightarrow \int_{0}^{T}\int (\rho u^{j})\partial_{ii}\varphi,\ \ \int_{0}^{T}\int\partial_{i}
\rho_{n}u_{n}^{j}\partial_{i}\varphi\rightarrow \int_{0}^{T}\int\partial_{i}
\rho u^{j}\partial_{i}\varphi,$$
$$\int_{0}^{T}\int (\rho_{n}u_{n}^{i})\partial_{ij}\varphi \rightarrow \int_{0}^{T}\int (\rho u^{i})\partial_{ij}\varphi,\ \ \int_{0}^{T}\int\partial_{j}
\rho_{n}u_{n}^{i}\partial_{i}\varphi\rightarrow \int_{0}^{T}\int\partial_{j}
\rho u^{i}\partial_{i}\varphi.$$
Similarly, we have
\begin{equation}\label{b33}
\begin{aligned}
\int_{0}^{T}\int\rho_{n}\nabla \triangle^{3}\rho_{n}\varphi=-\int_{0}^{T}\int(\rho_{n}{\rm div}\varphi+\varphi\cdot\nabla \rho_{n})\triangle^{3}\rho_{n}\\
=-\int_{0}^{T}\int\triangle(\rho_{n}{\rm div}\varphi+\varphi\cdot\nabla \rho_{n})\triangle^{2}\rho_{n},
\end{aligned}
\end{equation}
we focus on the most difficult term $-\int_{0}^{T}\int\varphi\cdot\nabla\triangle \rho_{n}\triangle^{2}\rho_{n}$,
because $\rho_{n}\rightarrow \rho$ strongly in $C([0,T];H^{3})$ and $\rho_{n} \rightharpoonup \rho$ in $ L^{2}(0,T;H^{4})$, we have
$$-\int_{0}^{T}\int\varphi\cdot\nabla\triangle \rho_{n}\triangle^{2}\rho_{n} \rightarrow -\int_{0}^{T}\int\varphi\cdot\nabla\triangle \rho\triangle^{2}\rho.$$
And we can apply the above arguments to handle the other terms from
$$-\int_{0}^{T}\int\triangle(\rho_{n}{\rm div}\varphi+\varphi\cdot\nabla \rho_{n})\triangle^{2}\rho_{n}.$$
Thus we have
$$\int_{0}^{T}\int\rho_{n}\nabla \triangle^{3}\rho_{n}\varphi\rightarrow  \int_{0}^{T}\int\rho\nabla \triangle^{3}\rho\varphi,\ \ as \ \ n\rightarrow\infty.$$

With the above compactness results in hand, we are ready to pass to the limits as $n\rightarrow\infty$ in the approximate system (\ref{b2}), (\ref{b6}). Thus, we can show that $(\rho,u,\Phi)$ solves
$$\rho_{t}+{\rm div}(\rho u)=\varepsilon\triangle\rho,\ \ pointwise\ \ on\ (0,T)\times\mathbb{T}^{3},$$
$$\triangle\Phi=-4\pi G(\rho-1),\ \ pointwise\ \ on\ (0,T)\times\mathbb{T}^{3},$$
and for any test function $\varphi$, the following holds
\begin{equation}\label{34}
\begin{aligned}
&\int\rho u(T)\varphi-\int m_{0}\varphi+\mu\int_{0}^{T}\int\triangle u\cdot \triangle \varphi-\int_{0}^{T}\int(\rho u\otimes u)\cdot\nabla\varphi\\
&+\int_{0}^{T}\int\rho \mathbb{D}u\cdot\nabla \varphi-\int_{0}^{T}\int P(\rho){\rm div}\varphi+\eta\int_{0}^{T}\int \rho ^{-6}{\rm div}\varphi+\varepsilon\int_{0}^{T}\int\nabla\rho\cdot\nabla u\varphi\\
&+r_{0}\int_{0}^{T}\int u\varphi+r_{1}\int_{0}^{T}\int\rho|u|u\varphi-\delta\int_{0}^{T}\int\rho\nabla\triangle^{3}\rho
\varphi=\int_{0}^{T}\int\rho\Phi\varphi
\end{aligned}
\end{equation}

Thanks to the lower semicontinuity of norms, we can pass to the limits in the energy estimate (\ref{b22}), and we have the following energy inequality in the sense of distributions on $(0,T)$.
\begin{equation}\label{b35}
\begin{aligned}
&\sup_{t\in (0,T)}E(t)+\frac{4\varepsilon}{a\gamma^{2}}\int_{0}^{T}\int|\nabla\rho^{\frac{\gamma}{2}}
|^{2}+\frac{2}{3}\eta\varepsilon\int_{0}^{T}\int|\nabla \rho^{-3}|^{2}+\int_{0}^{T}\int\rho |\mathbb{D}u|^{2}\\
&+r_{0}\int_{0}^{T}\int|u|^{2}+r_{1}\int_{0}^{T}\int\rho|u|^{3}
+\mu\int_{0}^{T}\int|\triangle u|^{2}+\delta\varepsilon\int_{0}^{T}\int|\triangle^{2}\rho|^{2}\\
&\leq C+C_{\varepsilon},
\end{aligned}
\end{equation}
where
$$E(t)=\int\frac{1}{2}\rho u^{2}+\frac{1}{a\gamma(\gamma-1)}\rho^{\gamma}+\frac{\eta}{7}\rho^{-6}+\frac{\delta}{2}|\nabla
\triangle\rho|^{2}dx,$$
and
$$C_{\varepsilon}=C(\frac{\varepsilon}{\delta}+\frac{\varepsilon}{\eta})[1+(\frac{\varepsilon}{\delta}
+\frac{\varepsilon}{\eta})Te^{C(\frac{\varepsilon}{\delta}
+\frac{\varepsilon}{\eta})T}]T.$$

Thus, we have the following proposition on the existence of weak solutions at this level approximate system.
\begin{Proposition}\label{pro3}
There exists a weak solution to the following system
\begin{equation}\label{b36}
\begin{aligned}
\left\{\begin{array}{lll}
&\rho_{t}+{\rm div}(\rho u)=\varepsilon\triangle\rho,\\
&(\rho u)_{t}+{\rm div}(\rho u\otimes u)+\nabla P(\rho)-\eta\nabla\rho^{-6}-{\rm div}(\rho \mathbb{D}u)+\mu \triangle^{2} u+\varepsilon\nabla\rho\cdot \nabla u\\
&\ \ \ \ \ \ \ \ \ \ \ \ \ \ \ \ \ \ \ \ \ \ \ \ r_{0}u+r_{1}\rho|u|u-\delta\rho\nabla\triangle^{3}\rho=\rho\nabla\Phi,\\
&\triangle\Phi=-4\pi G(\rho-1),
\end{array}\right.
\end{aligned}
\end{equation}
with suitable initial data, for any $T>0$. In particular, the weak solutions $(\rho,u,\Phi)$ satisfy the energy inequality (\ref{b35}) and (\ref{b30}).
\end{Proposition}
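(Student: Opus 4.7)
The plan is to construct solutions via a Faedo--Galerkin scheme on the finite-dimensional space $X_n$ and then pass to the limit as $n\to\infty$. First, given a velocity field $u\in C([0,T];X_n)$, I would solve the regularized continuity equation $\rho_t + {\rm div}(\rho u) = \varepsilon\triangle\rho$ by Lemma \ref{lemma1} and Proposition \ref{pro1}, obtaining a Lipschitz continuous solution operator $\mathcal{S}:C([0,T];X_n)\to C([0,T];C^{7}(\mathbb{T}^3))$ with uniform positive lower and upper bounds on $\rho=\mathcal{S}(u)$. Inverting the operator $\mathcal{M}[\rho]$ then recasts the Galerkin system (\ref{b6}) as the fixed-point problem (\ref{b7}). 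Since both $\mathcal{S}$ and $\mathcal{M}^{-1}$ are Lipschitz, the contraction mapping principle yields a unique local solution $(\rho_n,u_n)$ on some short interval $[0,T^{*}]$.

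To extend this to the full interval $[0,T]$, I would test (\ref{b6}) by $\varphi=u_n$ and use the computations (\ref{b9})--(\ref{b13}) to derive the energy identity leading to (\ref{b22}). Applying Gronwall to the $\eta\rho^{-6}$ and $\delta|\nabla\triangle\rho|^2$ contributions controls $\|u_n\|_{C([0,T^{*}];X_n)}$ uniformly and, using (\ref{b4}) together with the equivalence of norms on $X_n$, keeps the density bounded above and bounded away from zero on $[0,T^{*}]$. This allows iteration of the fixed-point step to reach the prescribed time $T$, yielding the global approximate solution of Proposition \ref{pro2}.

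With the $n$-uniform estimates of Proposition \ref{pro2} in hand, I would then pass to the limit $n\to\infty$ following Lemmas \ref{l1} and \ref{l2}: the bound $\partial_t\rho_n\in L^2(0,T;H^{-1})$ combined with $\rho_n\in L^\infty(0,T;H^3)\cap L^2(0,T;H^4)$ yields via Aubin--Lions (Lemma \ref{le2}) strong convergence of $\rho_n$ in $C([0,T];H^3)$ and hence a.e., so Egoroff's Lemma \ref{le3} delivers strong $L^1(0,T;L^1)$ convergence of $P(\rho_n)$ and $\rho_n^{-6}$; the Sobolev chain (\ref{b28})--(\ref{b30}) preserves the positive pointwise lower bound for the limiting density. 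For the momentum, the estimate $\partial_t(\rho_nu_n)\in L^2(0,T;H^{-3})$ obtained from (\ref{b31}), together with $\rho_nu_n\in L^2(0,T;H^1)$, gives the strong $L^2$ convergence $\rho_nu_n\to\rho u$, while the gravitational term $\nabla\Phi(\rho_n)$ converges by elliptic regularity.

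The main obstacle is passing to the limit in the nonlinear viscous term ${\rm div}(\rho_n\mathbb{D}u_n)$ and in the capillary term $\delta\rho_n\nabla\triangle^{3}\rho_n$, since mere weak $L^2$ convergence of $u_n$ is insufficient to identify their limits. The key device, carried out in (\ref{b32})--(\ref{b33}), is to integrate by parts so that each term either pairs the strongly convergent $\rho_nu_n$ against second derivatives of the test function or pairs the strongly convergent $\rho_n$ in $C([0,T];H^3)$ against the weakly convergent $\triangle^{2}\rho_n$ in $L^2(0,T;L^2)$. Once every nonlinear quantity is identified in the limit, the weak formulation (\ref{34}) is recovered, and the lower semicontinuity of norms under weak convergence propagates the approximate energy identity (\ref{b22}) into the energy inequality (\ref{b35}), completing the proof of Proposition \ref{pro3}.
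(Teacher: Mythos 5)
Your proposal follows essentially the same route as the paper: the Faedo--Galerkin construction on $X_n$ with the solution operator $\mathcal{S}$ and the fixed-point reformulation (\ref{b7}), the energy estimate (\ref{b8})--(\ref{b22}) with Gronwall to globalize, and then the passage $n\to\infty$ via Aubin--Lions, Egoroff, the integration-by-parts identities (\ref{b32})--(\ref{b33}) for the nonlinear viscous and capillary terms, and lower semicontinuity for (\ref{b35}). The argument is correct and matches the paper's proof in both structure and the key devices used.
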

\section{B-D entropy and passing to the limits as $\varepsilon,\mu\rightarrow 0$}
In this section, we deduce the B-D entropy estimate for the approximate system in Proposition \ref{pro3} which was first introduced by Bresch and Desjardins in \cite{BDL2003}, this B-D entropy will give a higher regularity of the density and will help us to get the compactness of $\rho$. By (\ref{b26}),(\ref{b30}) and $u\in L^{2}(0,T;H^{2})$, we have
\begin{equation}\label{c1}
\begin{aligned}
\rho(x,t)\geq C(\delta,\eta)>0,\ \ \rho\in L^{\infty}(0,T;H^{3})\cap L^{2}(0,T;H^{4}),\ \ and \ \ \partial_{t}\rho\in L^{2}(0,T;L^{2}).
\end{aligned}
\end{equation}
\subsection{B-D entropy.}
Thanks to (\ref{c1}), it's easy to check $\frac{\nabla\rho}{\rho}\in L^{2}(0,T;H^{3}) $ and $\partial_{t}\frac{\nabla\rho}{\rho}\in L^{2}(0,T;L^{2})$, so we can use $\frac{\nabla\rho}{\rho}$ as a test function to test the momentum equation to derive the Bresch-Desjardns entropy. Thus, we have
\begin{Lemma}\label{l3}
\begin{equation}\label{c2}
\begin{aligned}
&\frac{d}{dt}\int(\frac{1}{2}\rho|\frac{\nabla\rho}{\rho}|^{2}+\rho u\frac{\nabla\rho}{\rho})dx+\delta\int|\triangle^{2}\rho|^{2}+\frac{2}{3}\eta\int
|\nabla\rho^{-3}|^{2}-\int \rho\partial_{i}u^{j}\partial_{j}u^{i}\\
&\ \ +\varepsilon\int\frac{1}{\rho}|\triangle \rho|^{2}\\
&=\int\rho\nabla\Phi\cdot\frac{\nabla\rho}{\rho}-\int\nabla P(\rho)\cdot\frac{\nabla\rho}{\rho}-r_{0}\int u\cdot\frac{\nabla\rho}{\rho}-r_{1}\int\rho |u|u\cdot\frac{\nabla\rho}{\rho}\\
&\ \ -\mu\int
\triangle u\cdot\triangle(\frac{\nabla\rho}{\rho})-\varepsilon\int {\rm div }(\rho u)\frac{\triangle\rho}{\rho}-\varepsilon\int\partial_{i}\rho\partial_{i}
u^{j}\frac{\partial_{j}\rho}{\rho}-\frac{\varepsilon}{2}\int \triangle\rho|\frac{\nabla\rho}{\rho}|^{2}\\
&=\sum_{i=1}^{8}I_{i},
\end{aligned}
\end{equation}
\end{Lemma}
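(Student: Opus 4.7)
The plan is to derive this BD-type identity by testing the momentum equation from Proposition \ref{pro3} against $\frac{\nabla\rho}{\rho}$ and reorganizing the time derivatives using the (regularized) continuity equation. First I must justify that $\frac{\nabla\rho}{\rho}$ is an admissible test function: the lower bound $\rho\geq C(\delta,\eta)>0$ together with $\rho\in L^{\infty}(0,T;H^{3})\cap L^{2}(0,T;H^{4})$ and $\partial_{t}\rho\in L^{2}(0,T;L^{2})$ stated in (\ref{c1}) immediately give $\frac{\nabla\rho}{\rho}\in L^{2}(0,T;H^{3})$ and $\partial_{t}\frac{\nabla\rho}{\rho}\in L^{2}(0,T;L^{2})$, so the pairing is well defined and the integration by parts in time is legitimate.

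Next I compute term by term. Writing $\int(\rho u)_{t}\cdot\frac{\nabla\rho}{\rho}=\frac{d}{dt}\int\rho u\cdot\frac{\nabla\rho}{\rho}-\int\rho u\cdot\partial_{t}\frac{\nabla\rho}{\rho}$, substituting $\partial_{t}\rho=\varepsilon\triangle\rho-{\rm div}(\rho u)$ into $\partial_{t}\frac{\nabla\rho}{\rho}=\frac{\nabla\partial_{t}\rho}{\rho}-\frac{\nabla\rho\,\partial_{t}\rho}{\rho^{2}}$, and integrating by parts, the convective contribution combines with $\int\rho u\cdot\partial_{t}\frac{\nabla\rho}{\rho}$ to generate precisely $\frac{d}{dt}\int\frac{1}{2}\rho|\frac{\nabla\rho}{\rho}|^{2}$ plus the $\varepsilon$-corrections $I_{6},I_{7},I_{8}$ and the dissipative piece $\varepsilon\int\frac{|\triangle\rho|^{2}}{\rho}$ on the left-hand side. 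The pressure, Poisson, damping and drag terms yield $I_{1}$--$I_{4}$ directly with no manipulation.

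The structurally important terms require more care. For the viscous flux, after one integration by parts,
\[
-\int{\rm div}(\rho\mathbb{D}u)\cdot\tfrac{\nabla\rho}{\rho}=\int\mathbb{D}u:\bigl(\nabla^{2}\rho-\tfrac{\nabla\rho\otimes\nabla\rho}{\rho}\bigr),
\]
and a further integration by parts on the $\nabla^{2}\rho$ piece, together with the symmetry of $\mathbb{D}u$, produces exactly the indefinite quadratic $-\int\rho\,\partial_{i}u^{j}\partial_{j}u^{i}$ that sits on the left-hand side (the remaining cross terms get absorbed by the $\int\rho u\cdot\partial_{t}\frac{\nabla\rho}{\rho}$ rearrangement above). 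The cold pressure is straightforward: $\nabla\rho^{-6}=-6\rho^{-7}\nabla\rho$ gives $\int(-\eta\nabla\rho^{-6})\cdot\frac{\nabla\rho}{\rho}=6\eta\int\rho^{-8}|\nabla\rho|^{2}=\frac{2}{3}\eta\int|\nabla\rho^{-3}|^{2}$. For the capillary-type regularization, successive integrations by parts give $\int\delta\rho\nabla\triangle^{3}\rho\cdot\frac{\nabla\rho}{\rho}=\delta\int\nabla\rho\cdot\nabla\triangle^{3}\rho=\delta\int|\triangle^{2}\rho|^{2}$. The biharmonic term $\mu\triangle^{2}u$ simply contributes $I_{5}=-\mu\int\triangle u\cdot\triangle\bigl(\frac{\nabla\rho}{\rho}\bigr)$ after two integrations by parts.

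The main obstacle I expect is bookkeeping the $\varepsilon$-contributions: the three terms $I_{6},I_{7},I_{8}$ and the positive dissipation $\varepsilon\int\frac{|\triangle\rho|^{2}}{\rho}$ all emerge from the single $\varepsilon\triangle\rho$ term in the continuity equation when it interacts with both $\frac{d}{dt}\int\frac{|\nabla\rho|^{2}}{2\rho}$ and $\frac{d}{dt}\int u\cdot\nabla\rho$; keeping them separate requires carefully choosing the order of the integrations by parts so that no spurious cancellation is introduced. Finally, collecting all computed pieces and rearranging by moving the nonnegative quantities $\delta\int|\triangle^{2}\rho|^{2}$, $\frac{2}{3}\eta\int|\nabla\rho^{-3}|^{2}$ and $\varepsilon\int\frac{|\triangle\rho|^{2}}{\rho}$, together with the sign-indefinite $-\int\rho\,\partial_{i}u^{j}\partial_{j}u^{i}$, to the left-hand side yields exactly the identity claimed in Lemma \ref{l3}.
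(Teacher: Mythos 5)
Your overall strategy --- test the momentum equation of Proposition \ref{pro3} against $\frac{\nabla\rho}{\rho}$, justify admissibility via (\ref{c1}), and reorganize the time derivatives through the regularized continuity equation --- is exactly the route the paper takes (the paper in fact states the identity without writing out the computation), and several of your individual steps are correct: the admissibility of the test function, the cold-pressure identity $6\eta\int\rho^{-8}|\nabla\rho|^{2}=\frac{2}{3}\eta\int|\nabla\rho^{-3}|^{2}$, and the direct appearance of $I_{1}$--$I_{5}$.

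However, there is a genuine structural error in your treatment of the term $-\int\rho\,\partial_{i}u^{j}\partial_{j}u^{i}$. You claim it is produced by integrating by parts the $\nabla^{2}\rho$ piece of $\int\mathbb{D}u:\bigl(\nabla^{2}\rho-\frac{\nabla\rho\otimes\nabla\rho}{\rho}\bigr)$. This cannot work: $\int\mathbb{D}u:\nabla^{2}\rho=\int\partial_{i}u^{j}\,\partial_{ij}\rho$ is \emph{linear} in $\nabla u$ and carries two derivatives on $\rho$, and any integration by parts keeps it bilinear in $(u,\rho)$ (it becomes, e.g., $-\int\triangle u^{j}\,\partial_{j}\rho$ or $\int{\rm div}\,u\,\triangle\rho$); it can never become $-\int\rho\,\partial_{i}u^{j}\partial_{j}u^{i}$, which is \emph{quadratic} in $\nabla u$ with $\rho$ undifferentiated. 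In the correct bookkeeping the quadratic term comes from the time-derivative rearrangement: writing $\frac{d}{dt}\int u\cdot\nabla\rho=\int u_{t}\cdot\nabla\rho+\int u\cdot\nabla\rho_{t}$ and substituting $\nabla\rho_{t}=\varepsilon\nabla\triangle\rho-\nabla{\rm div}(\rho u)$, the piece $-\int u\cdot\nabla{\rm div}(\rho u)=\int{\rm div}\,u\,{\rm div}(\rho u)$ together with the convection term generates $\int\rho\,\partial_{i}u^{j}\partial_{j}u^{i}$ up to commutator terms in $\nabla\rho$, and it is precisely those leftover bilinear terms that cancel against the viscous contribution $\int\rho\,\mathbb{D}u:\nabla(\frac{\nabla\rho}{\rho})$ --- this cancellation is where the Bresch--Desjardins relation $\mu(\rho)=\rho$, $\lambda(\rho)=0$ enters. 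So the viscous term's role is to be cancelled, not to produce the quadratic term; as written, your derivation would not close. Two smaller slips: $\int\nabla\rho\cdot\nabla\triangle^{3}\rho=-\int|\triangle^{2}\rho|^{2}$, not $+\int|\triangle^{2}\rho|^{2}$ (the positive sign on the left of (\ref{c2}) is right only because this term enters the momentum equation as $-\delta\rho\nabla\triangle^{3}\rho$); and $I_{7}$ does not come from the $\varepsilon\triangle\rho$ in the continuity equation but directly from the term $\varepsilon\nabla\rho\cdot\nabla u$ already present in the approximate momentum equation (\ref{b36}).
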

firstly, we control the terms $I_{1}-I_{4}$:
\begin{equation}\label{c3}
\begin{aligned}
I_{1}=\int\rho\nabla\Phi\cdot\frac{\nabla\rho}{\rho}=-\int \triangle\Phi(\rho-1)=4\pi G\int (\rho-1)^{2},
\end{aligned}
\end{equation}
\begin{equation}\label{c4}
\begin{aligned}
I_{2}&=-\int \nabla P(\rho)\frac{\nabla\rho}{\rho}=-\int\frac{P^{'}}{\rho}|\nabla\rho|^{2}\\
&\leq -\int\frac{\frac{1}{a}\rho^{\gamma-1}-b}{\rho}|\nabla\rho|^{2}= -\frac{4}{a\gamma^{2}}\int|\nabla\rho^{\frac{\gamma}{2}}|^{2}+4b\int|\nabla\sqrt{\rho}|^{2},
\end{aligned}
\end{equation}
where we used the condition (\ref{a3}), besides this we can also have
\begin{equation}\label{c5}
\begin{aligned}
I_{3}&=-r_{0}\int u\cdot \frac{\nabla\rho}{\rho}=- r_{0}\int \rho ^{-1}{\rm div}(\rho u)=- r_{0}\int \rho ^{-1}(\varepsilon\triangle \rho-\rho_{t})\\
&=r_{0}\int \partial_{t}\log \rho-r_{0}\varepsilon\int\frac{1}{\rho^{2}}|\nabla\rho|^{2},
\end{aligned}
\end{equation}
\begin{equation}\label{c6}
\begin{aligned}
I_{4}=-r_{1}\int \rho|u|u\cdot\frac{\nabla\rho}{\rho}=r_{1}\int \rho(|u|{\rm div}u+u\frac{u}{|u|}\nabla u)\leq C\|\sqrt{\rho}u\|_{L^{2}}\|\sqrt{\rho}\nabla u\|_{L^{2}}.
\end{aligned}
\end{equation}
Substituting (\ref{c3})-(\ref{c6}) into (\ref{c2}) and integrating it with respect to the time $t$ over [0,T], we have
\begin{equation}\label{c7}
\begin{aligned}
&\frac{1}{2}\int \rho(u+\frac{\nabla\rho}{\rho})^{2}-r_{0}\log\rho\  dx+\varepsilon\int_{0}^{T}\int\frac{1}{\rho}|\triangle \rho|^{2}+r_{0}\varepsilon\int_{0}^{T}\int\rho^{-2}|\nabla\rho|^{2}\\
&+\frac{2}{3}\eta\int_{0}^{T}\int|\nabla
\rho^{-3}|^{2}+\int_{0}^{T}\int \rho|\nabla u|^{2}+\frac{4}{a\gamma^{2}}\int_{0}^{T}\int|\nabla\rho^{\frac{\gamma}{2}}|^{2}+\delta\int_{0}^{T}\int
|\triangle^{2} \rho|^{2}\\
&\leq \int\frac{1}{2}\rho |u|^{2}+\int_{0}^{T}\int\rho|\mathbb{D}u|^{2}+4b\int_{0}^{T}\int|\nabla\sqrt{\rho}|^{2}+4\pi G\int_{0}^{T}\int(\rho-1)^{2}\\
&+C\int_{0}^{T}\|\sqrt{\rho}u\|_{L^{2}}\|\sqrt{\rho}\nabla u\|_{L^{2}}+\int_{0}^{T}\sum_{i=5}^{8}I_{i}dt+\frac{1}{2}\int \rho_{0}(u_{0}+\frac{\nabla\rho_{0}}{\rho_{0}})^{2}-r_{0}\log\rho_{0}\  dx\\
&\leq C+C_{\varepsilon}+\frac{1}{2}\int_{0}^{T}\|\sqrt{\rho}\nabla u\|_{L^{2}}^{2}dt+4b\int_{0}^{T}\int|\nabla\sqrt{\rho}|^{2}+4\pi G\int_{0}^{T}\int(\rho-1)^{2}\\
&+\int_{0}^{T}\sum_{i=5}^{8}I_{i}dt,
\end{aligned}
\end{equation}
where we used the energy inequality (\ref{b35}). Then we need to control the rest terms on the right hand of the (\ref{c7}):
\begin{equation}\label{c8}
\begin{aligned}
4\pi G&\int_{0}^{T}\int(\rho-1)^{2}\leq C+C\int_{0}^{T}(\|\rho\|_{L^{1}}^{\frac{5\gamma-6}{2(5\gamma-3)}}\|\rho\|_{L^{\frac{5\gamma}{3}}}
^{\frac{5\gamma}{2(5\gamma-3)}})^{2}dt\\
&\leq C+C\int_{0}^{T}\|\rho\|_{L^{1}}^{\frac{5\gamma-6}{(5\gamma-3)}}\big(\|\rho^{\gamma}\|_{L^{1}}^{\frac
{2}{5}}\|\rho^{\gamma}\|_{L^{3}}^{\frac{3}{5}}\big)^{\frac{5}{5\gamma-3}}dt\\
&\leq C+C\int_{0}^{T}(C+C_{\varepsilon})^{\frac{2}{5\gamma-3}}\|\rho^{\gamma}\|_{L^{3}}
^{\frac{3}{5\gamma-3}}dt\\
&\leq \frac{1}{2}\frac{4}{a\gamma^{2}}\int_{0}^{T}\|\nabla\rho^{\frac{\gamma}{2}}\|_{L^{2}}^{2}dt+(C
+C_{\varepsilon})^{3},
\end{aligned}
\end{equation}
where we need to require $\frac{6}{5\gamma-3}<2$, which implies $\gamma>\frac{6}{5}$.
\begin{equation}\label{c9}
\begin{aligned}
&\int_{0}^{T}I_{5}dt=-\mu\int_{0}^{T}\int\triangle u\cdot\triangle(\frac{\nabla\rho}{\rho})\\
&=-\mu\int_{0}^{T}\int\triangle u\big[\frac{1}{\rho}{\rm div}\nabla^{2}\rho-\frac{\nabla\rho}{\rho^{2}}\nabla^{2}\rho-\frac{1}{\rho^{2}}{\rm div}(\nabla\rho\otimes\nabla\rho)+2\nabla\rho\otimes\nabla\rho\frac{\nabla\rho}{\rho^{3}}\big]\\
&\leq C\mu\int_{0}^{T}\int|\triangle u|\big[\frac{1}{\rho}|\nabla^{3}\rho|+\frac{1}{\rho^{2}}|\nabla\rho||\nabla^{2}\rho|+\frac{1}{\rho^{3}}
|\nabla\rho|^{3}\big]\\
&\leq C\sqrt{\mu}\|\sqrt{\mu}\triangle u\|_{L^{2}(L^{2})}\big[\|\frac{1}{\rho}\|_{L^{\infty}(L^{\infty})}\|\nabla^{3}\rho\|_{L^{2}(L^{2})}
+\|\frac{1}{\rho}\|_{L^{\infty}(L^{\infty})}^{2}\|\nabla\rho\|_{L^{\infty}(L^{\infty})}\|\nabla
^{2}\rho\|_{L^{2}(L^{2})}\\
&+\|\frac{1}{\rho}\|_{L^{\infty}(L^{\infty})}^{3}\|\nabla\rho\|_{L^{6}
(L^{6})}^{3}\big]\\
&\leq C\sqrt{\mu}\|\sqrt{\mu}\triangle u\|_{L^{2}(L^{2})}\big[\|\frac{1}{\rho}\|_{L^{\infty}(L^{\infty})}^{3}\|\nabla^{3}\rho\|_{L^{2}(L^{2})}
^{3}+\|\frac{1}{\rho}\|_{L^{\infty}(L^{\infty})}\big]\\
&\leq C(\delta,\eta)(C+C_{\varepsilon})^{s}\sqrt{\mu},
\end{aligned}
\end{equation}
for some large fixed constant $s>0$.
\begin{equation}\label{c10}
\begin{aligned}
&\int_{0}^{T}I_{6}dt=-\varepsilon\int_{0}^{T}\int {\rm div }(\rho u)\frac{\triangle \rho}{\rho}=-\varepsilon\int_{0}^{T}\int{\rm div }u\triangle \rho-\varepsilon\int_{0}^{T}\int\frac{u\cdot\nabla\rho}{\rho}\triangle\rho\\
&\leq C\varepsilon\|\nabla u\|_{L^{2}(L^{6})}\|\triangle \rho\|_{L^{2}(L^{\frac{6}{5}})}+C\varepsilon\|\rho^{\frac{1}{3}}u\|_{L^{3}(L^{3})}\|\rho^{-\frac
{4}{3}}\|_{L^{\infty}(L^{\infty})}\|\nabla\rho\|_{L^{6}(L^{6})}\|\triangle\rho\|_{L^{2}(L^{2})}\\
&\leq C\sqrt{\varepsilon}(\|\sqrt{\varepsilon}\nabla^{2}u\|_{L^{2}(L^{2})}+\|u\|_{L^{2}(L^{2})})
(\|\rho\|_{L^{\infty}(L^{1})}+\|\nabla^{3}\rho\|_{L^{\infty}(L^{2})})\\
&\ \ \ \ \ +C\varepsilon\|\rho^{\frac{1}{3}}u\|_{L^{3}(L^{3})}\|\rho^{-\frac
{4}{3}}\|_{L^{\infty}(L^{\infty})}(\|\rho\|_{L^{\infty}(L^{1})}+\|\nabla^{3}\rho\|_{L^{\infty}(L^{2})})^{2}\\
&\leq C(\delta,\eta,r_{0})(C+C_{\varepsilon})^{s}(\sqrt{\varepsilon}+\varepsilon),
\end{aligned}
\end{equation}
\begin{equation}\label{c11}
\begin{aligned}
&\int_{0}^{T}I_{7}dt=-\varepsilon\int\partial_{i}\rho\partial_{i}u^{j}\frac{\partial_{j}\rho}{\rho}\leq C\sqrt{\varepsilon}\|\rho^{-1}\|_{L^{\infty}(L^{\infty})}\|\nabla\rho\|_{L^{4}(L^{4})}^{2}\|
\nabla u\|_{L^{2}(L^{2})}\\
&\leq C(\delta,\eta)\sqrt{\varepsilon}\|\rho^{-1}\|_{L^{\infty}(L^{\infty})}\|\nabla^{3}\rho\|_{L^{\infty}
(L^{2})}^{2}\|\sqrt{\varepsilon}\nabla^{2} u\|_{L^{2}(L^{2})}\\
&\leq C(\delta,\eta,r_{0})(C+C_{\varepsilon})^{s}\sqrt{\varepsilon},
\end{aligned}
\end{equation}
and
\begin{equation}\label{c12}
\begin{aligned}
&\int_{0}^{T}I_{8}dt=-\frac{\varepsilon}{2}\int \triangle\rho|\frac{\nabla\rho}{\rho}|^{2}\leq C\varepsilon\int_{0}^{T}\|\rho^{-1}\|_{L^{\infty}}^{2}\|\triangle\rho\|_{L^{2}}\|\nabla\rho\|_{L^{4}}
^{2}dt\\
&\leq C\varepsilon\|\rho^{-1}\|_{L^{\infty}(L^{\infty})}^{2}(\|\nabla^{3}\rho\|_{L^{\infty}(L^{2})}+
\|\rho\|_{L^{\infty}(L^{1})})^{3}\\
&\leq C(\delta,\eta,)(C+C_{\varepsilon})^{s}\varepsilon.
\end{aligned}
\end{equation}
Then substituting (\ref{c8})-(\ref{c12}) into (\ref{c7}), we have
\begin{equation}\label{c13}
\begin{aligned}
&\frac{1}{2}\int \rho(u+\frac{\nabla\rho}{\rho})^{2}-r_{0}\log\rho dx+\varepsilon\int_{0}^{T}\int\frac{1}{\rho}|\triangle \rho|^{2}+r_{0}\varepsilon\int_{0}^{T}\int\rho^{-2}|\nabla\rho|^{2}\\
&+\frac{2}{3}\eta\int_{0}^{T}\int|\nabla
\rho^{-3}|^{2}+\frac{1}{2}\int_{0}^{T}\int \rho|\nabla u|^{2}+\frac{2}{a\gamma^{2}}\int_{0}^{T}\int|\nabla\rho^{\frac{\gamma}{2}}|^{2}+\delta\int_{0}^{T}\int
|\triangle^{2} \rho|^{2}\\
&\leq 4b\int_{0}^{T}\int|\nabla\sqrt{\rho}|^{2}+C(\delta,\eta,r_{0})(C+C_{\varepsilon})^{s}(\varepsilon+
\sqrt{\varepsilon}+\sqrt{\mu})+(C+C_{\varepsilon})^{3},
\end{aligned}
\end{equation}
by using the Gronwall inequality, yields
\begin{equation}\label{c14}
\begin{aligned}
&\int|\nabla\sqrt{\rho}|^{2}dx\leq [C(\delta,\eta,r_{0})(C+C_{\varepsilon})^{s}(\varepsilon+
\sqrt{\varepsilon}+\sqrt{\mu})+(C+C_{\varepsilon})^{3}](1+4bT\exp^{4bT}),
\end{aligned}
\end{equation}
so with this inequality and (\ref{c13}), we have the following B-D entropy estimats
\begin{equation}\label{c15}
\begin{aligned}
&\frac{1}{2}\int \rho(u+\frac{\nabla\rho}{\rho})^{2}-r_{0}\log\rho dx+\varepsilon\int_{0}^{T}\int|\triangle \rho|^{2}+r_{0}\varepsilon\int_{0}^{T}\int\rho^{-2}|\nabla\rho|^{2}\\
&+\frac{2}{3}\eta\int_{0}^{T}\int|\nabla
\rho^{-3}|^{2}+\int_{0}^{T}\int \rho|\nabla u|^{2}+\frac{4}{a\gamma^{2}}\int_{0}^{T}\int|\nabla\rho^{\frac{\gamma}{2}}|^{2}+\delta\int_{0}^{T}\int
|\triangle^{2} \rho|^{2}\\
&\leq[C(\delta,\eta,r_{0})(C+C_{\varepsilon})^{s}(\varepsilon+
\sqrt{\varepsilon}+\sqrt{\mu})+(C+C_{\varepsilon})^{3}][(1+4bT\exp^{4bT})+1],
\end{aligned}
\end{equation}
where $s>0$ is a suitable large fixed constant, $C$ is a generic positive constant depending on the initial data and other constants but independent of $\varepsilon,\delta,\eta,r_{0}$, and $C_{\varepsilon}=C(\frac{\varepsilon}{\delta}+\frac{\varepsilon}{\eta})[1+(\frac{\varepsilon}{\delta}
+\frac{\varepsilon}{\eta})Te^{C(\frac{\varepsilon}{\delta}
+\frac{\varepsilon}{\eta})T}]T.$

Thus, at this approximation level, with $\delta,\eta,r_{0}$ being fixed and $\varepsilon\ll 1,\mu\ll 1$,  from (\ref{b35}) and (\ref{c15}) we get the following energy inequality and the B-D entropy
\begin{equation}\label{c16}
\begin{aligned}
&\sup_{t\in (0,T)}\int\frac{1}{2}\rho_{\mu,\varepsilon} u_{\mu,\varepsilon}^{2}+\frac{1}{a\gamma(\gamma-1)}\rho_{\mu,\varepsilon}^{\gamma}+\frac{\eta}{7}
\rho_{\mu,\varepsilon}^{-6}+\frac{\delta}{2}|\nabla
\triangle\rho_{\mu,\varepsilon}|^{2}dx+\frac{4\varepsilon}{a\gamma^{2}}\int_{0}^{T}\int
|\nabla\rho_{\mu,\varepsilon}^{\frac{\gamma}{2}}
|^{2}\\
&+\frac{2}{3}\eta\varepsilon\int_{0}^{T}\int|\nabla \rho_{\mu,\varepsilon}^{-3}|^{2}+\int_{0}^{T}\int\rho_{\mu,\varepsilon} |\mathbb{D}u_{\mu,\varepsilon}|^{2}+r_{0}\int_{0}^{T}\int|u_{\mu,\varepsilon}|^{2}
+r_{1}\int_{0}^{T}\int\rho_{\mu,\varepsilon}|u_{\mu,\varepsilon}|^{3}\\
&+\mu\int_{0}^{T}\int|\triangle u_{\mu,\varepsilon}|^{2}+\delta\varepsilon\int_{0}^{T}\int|\triangle^{2}\rho_{\mu,\varepsilon}|^{2}\leq C(\delta,\eta,T),
\end{aligned}
\end{equation}
and
\begin{equation}\label{c17}
\begin{aligned}
&\frac{1}{2}\int \rho_{\mu,\varepsilon}(u_{\mu,\varepsilon}+\frac{\nabla\rho_{\mu,\varepsilon}}{\rho_{\mu,\varepsilon}})^{2}
-r_{0}\log\rho_{\mu,\varepsilon} dx+\varepsilon\int_{0}^{T}\int\frac{1}{\rho_{\mu,\varepsilon}}|\triangle \rho_{\mu,\varepsilon}|^{2}+r_{0}\varepsilon\int_{0}^{T}\int\rho_{\mu,\varepsilon}^{-2}
|\nabla\rho_{\mu,\varepsilon}|^{2}\\
&+\frac{2}{3}\eta\int_{0}^{T}\int|\nabla
\rho_{\mu,\varepsilon}^{-3}|^{2}+\frac{1}{2}\int_{0}^{T}\int \rho_{\mu,\varepsilon}|\nabla u|^{2}+\frac{2}{a\gamma^{2}}\int_{0}^{T}\int|\nabla\rho_{\mu,\varepsilon}^{\frac{\gamma}{2}}|^{2}
+\delta\int_{0}^{T}\int
|\triangle^{2} \rho_{\mu,\varepsilon}|^{2}\\
&\leq C(\delta,\eta,T),
\end{aligned}
\end{equation}
where $C(\delta,\eta,T)$ denotes that $C$ particularly depends on $\delta,\eta$ and time $T$.
\subsection{Passing to the limits as $\mu,\varepsilon\rightarrow0$.} We use $(\rho_{\mu,\varepsilon},u_{\mu,\varepsilon},\Phi(\rho_{\mu,\varepsilon}))$ to denote the solutions at this level of approximation. From (\ref{c16}), (\ref{c17}), it's easy to show that $(\rho_{\mu,\varepsilon},u_{\mu,\varepsilon},\Phi(\rho_{\mu,\varepsilon}))$ has the following uniform regularities
\begin{equation}\label{c18}
\begin{aligned}
&\sqrt{\rho_{\mu,\varepsilon}}u_{\mu,\varepsilon}\in L^{\infty}(0,T;L^{2}), \sqrt{\rho_{\mu,\varepsilon}}\mathbb{D}u_{\mu,\varepsilon}\in L^{2}(0,T;L^{2}), \\
&\rho_{\mu,\varepsilon}^{\gamma}\in L^{\infty}(0,T;L^{1}),\ \sqrt{\varepsilon}\nabla\rho_{\mu,\varepsilon}^{\frac{\gamma}{2}}\in L^{2}(0,T;L^{2}),\ \eta\rho_{\mu,\varepsilon}^{-6}\in L^{\infty}(0,T;L^{1}),\\
&\sqrt{\varepsilon}\nabla\rho_{\mu,\varepsilon}^{-3}\in L^{2}(0,T;L^{2}),\ \rho_{\mu,\varepsilon}\in L^{\infty}(0,T;H^{3}),\ \sqrt{\varepsilon}\rho_{\mu,\varepsilon}\in L^{2}(0,T;H^{4}),\\
&\ u_{\mu,\varepsilon}\in L^{2}(0,T;L^{2}),\ \rho_{\mu,\varepsilon}^{\frac{1}{3}}u_{\mu,\varepsilon}\in L^{3}(0,T;L^{3}),\sqrt{\mu}\triangle u_{\mu,\varepsilon}\in L^{2}(0,T;L^{2}),
\end{aligned}
\end{equation}
by the Bresch-Dejardins entropy, we also have the following additional regularities
\begin{equation}\label{c19}
\begin{aligned}
&\nabla\sqrt{\rho_{\mu,\varepsilon}}\in L^{\infty}(0,T;L^{2}),\ \sqrt{\delta}\rho_{\mu,\varepsilon}\in L^{2}(0,T;H^{4}),\ \nabla\rho_{\mu,\varepsilon}^{\frac{\gamma}{2}}\in L^{2}(0,T;L^{2})\\
&\sqrt{\eta}\nabla\rho_{\mu,\varepsilon}^{-3}\in L^{2}(0,T;L^{2}),\ \sqrt{\rho_{\mu,\varepsilon}}\nabla u_{\mu,\varepsilon}\in L^{2}(0,T;L^{2}).
\end{aligned}
\end{equation}
Based on the above regularities, we have the following uniform compactness results:
\begin{Lemma}\label{l4}
Let $(\rho_{\mu,\varepsilon},u_{\mu,\varepsilon},\Phi(\rho_{\mu,\varepsilon}))$ be weak solutions to (\ref{b36}), in combination with (\ref{c18}) and (\ref{c19}), we have
\begin{equation}\label{c20}
\begin{aligned}
&\rho_{\mu,\varepsilon}\in L^{2}(0,T;H^{4}),\ \ \partial_{t}\rho_{\mu,\varepsilon}\in L^{2}(0,T;H^{-1}),\\
&\rho_{\mu,\varepsilon}u_{\mu,\varepsilon}\in L^{2}(0,T;W^{1,\frac{3}{2}}),\ \ \partial_{t}(\rho_{\mu,\varepsilon}u_{\mu,\varepsilon})\in\ L^{2}(0,T;H^{-3}),\\
&\rho_{\mu,\varepsilon}^{\gamma}\in L^{\frac{5}{3}}(0,T;L^{\frac{5}{3}}),\ \rho_{\mu,\varepsilon}^{-6}\in L^{\frac{5}{3}}(0,T;L^{\frac{5}{3}}),\\
&P(\rho_{\mu,\varepsilon})\in L^{\frac{5}{3}}(0,T;L^{\frac{5}{3}}),\ \Phi(\rho_{\mu,\varepsilon})\in L^{\infty}(0,T;H^{2}),
\end{aligned}
\end{equation}
and using Aubin-Lions Lemma, we have the following compactness results
\begin{equation}\label{c21}
\begin{aligned}
&\rho_{\mu,\varepsilon}\rightarrow \rho,\ a.e.\ and\ strongly\ in\  C([0,T];H^{3}),\ \rho_{\mu,\varepsilon}\rightharpoonup \rho,\ in\ L^{2}(0,T;H^{4}),\ \\
&P(\rho_{\mu,\varepsilon})\rightarrow P(\rho),\ a.e.\ and\ strongly\ in\  L^{1}(0,T;L^{1}),\\
&u_{\mu,\varepsilon}\rightharpoonup u,\ in\ L^{2}(0,T;L^{2}),\\
&\rho_{\mu,\varepsilon}u_{\mu,\varepsilon}\rightarrow \rho u\ strongly\ in\ L^{2}(0,T;L^{p}),\ for\ \forall 1\leq p<3,\\
&\rho_{\mu,\varepsilon}^{-6}\rightarrow \rho^{-6},\ a.e.\ and\ strongly\ in\  L^{1}(0,T;L^{1}),\\
&\nabla\Phi(\rho_{\mu,\varepsilon}) \rightarrow \nabla \Phi(\rho) \ strongly\ in\ L^{2}(0,T;L^{2}),\\
&-\rho_{\mu,\varepsilon}\mathbb{D}u_{\mu,\varepsilon}\rightarrow -\rho\mathbb{D}u,\ in\ the\ sense\ of\ distribution\ on\ (0,T)\times\mathbb{T}^{3},\\
&-\delta\rho_{\mu,\varepsilon}\nabla\triangle^{3}\rho_{\mu,\varepsilon}\rightarrow-\delta\rho
\nabla\triangle^{3}\rho,\ in\ the\ sense\ of\ distribution\ on\ (0,T)\times\mathbb{T}^{3},\\
\end{aligned}
\end{equation}\end{Lemma}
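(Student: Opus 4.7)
My plan is to harvest the $n$-independent energy/B-D bounds (\ref{c16})--(\ref{c19}), use the PDE to produce time-derivative bounds in negative Sobolev spaces, and then feed everything into the Aubin-Lions Lemma \ref{le2} to extract strong convergence; the stability of the nonlinear terms follows from these strong convergences, elliptic regularity for $\Phi$, and Egoroff's Lemma \ref{le3} for the superlinear quantities $P(\rho)$ and $\rho^{-6}$.

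First I would collect the uniform estimates in (\ref{c20}). The bound $\rho_{\mu,\varepsilon}\in L^\infty(0,T;H^3)\cap L^2(0,T;H^4)$ (at fixed $\delta$) is immediate from $\sqrt{\delta}|\nabla\triangle\rho_{\mu,\varepsilon}|\in L^\infty(0,T;L^2)$ together with $\sqrt{\delta\varepsilon}\triangle^2\rho_{\mu,\varepsilon}\in L^2(0,T;L^2)$ and conservation of mass; interpolating against $\nabla\rho^{\gamma/2}\in L^2(L^2)$ through Gagliardo-Nirenberg gives the $L^{5/3}(L^{5/3})$ control of $\rho_{\mu,\varepsilon}^\gamma$ and hence of $P(\rho_{\mu,\varepsilon})$ via (\ref{a3}); the symmetric argument using $\sqrt{\eta}\nabla\rho_{\mu,\varepsilon}^{-3}\in L^2(L^2)$ gives the $L^{5/3}(L^{5/3})$ bound on $\rho_{\mu,\varepsilon}^{-6}$. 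Testing the continuity equation $\partial_t\rho_{\mu,\varepsilon}=\varepsilon\triangle\rho_{\mu,\varepsilon}-\mathrm{div}(\rho_{\mu,\varepsilon} u_{\mu,\varepsilon})$ against $H^1$ functions yields $\partial_t\rho_{\mu,\varepsilon}\in L^2(0,T;H^{-1})$ since $\rho_{\mu,\varepsilon} u_{\mu,\varepsilon}=\sqrt{\rho_{\mu,\varepsilon}}\cdot\sqrt{\rho_{\mu,\varepsilon}}u_{\mu,\varepsilon}$ lies in $L^2(L^{3/2})$ via Hölder from $\sqrt{\rho}\in L^\infty(L^6)$ and $\sqrt{\rho}u\in L^\infty(L^2)$. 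Writing $\nabla(\rho_{\mu,\varepsilon} u_{\mu,\varepsilon})=2\sqrt{\rho_{\mu,\varepsilon}}\,\nabla\sqrt{\rho_{\mu,\varepsilon}}\otimes u_{\mu,\varepsilon}+\sqrt{\rho_{\mu,\varepsilon}}\,\sqrt{\rho_{\mu,\varepsilon}}\nabla u_{\mu,\varepsilon}$ and applying Hölder then gives $\rho_{\mu,\varepsilon} u_{\mu,\varepsilon}\in L^2(0,T;W^{1,3/2})$. The elliptic estimate $\|\Phi(\rho_{\mu,\varepsilon})\|_{H^2}\lesssim\|\rho_{\mu,\varepsilon}-1\|_{L^2}$ furnishes the $L^\infty(0,T;H^2)$ bound for $\Phi$.

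Next I would produce the time-derivative bound $\partial_t(\rho_{\mu,\varepsilon} u_{\mu,\varepsilon})\in L^2(0,T;H^{-3})$ by rewriting the momentum equation (\ref{b31}) and checking that each right-hand side term lies in $L^2(0,T;H^{-3})$ uniformly in $\mu,\varepsilon$: the convective term lives in $L^\infty(L^1)$ since $\rho|u|^2$ is controlled; $\nabla P(\rho_{\mu,\varepsilon})$, $\eta\nabla\rho_{\mu,\varepsilon}^{-6}$ and $\delta\rho_{\mu,\varepsilon}\nabla\triangle^3\rho_{\mu,\varepsilon}$ sit in $L^2(H^{-3})$ thanks to $(\ref{c20})_3$, $(\ref{c20})_5$ and $\sqrt{\delta}\rho_{\mu,\varepsilon}\in L^2(H^4)$; $\mu\triangle^2 u_{\mu,\varepsilon}$ is controlled by $\sqrt{\mu}\triangle u_{\mu,\varepsilon}\in L^2(L^2)$; the Poisson source $\rho\nabla\Phi$ is controlled by $L^\infty(L^1)$; and the damping/drag/diffusion corrections are directly bounded. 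Then Aubin-Lions with $H^3\hookrightarrow\hookrightarrow H^{3-\sigma}\hookrightarrow H^{-1}$ upgrades $\rho_{\mu,\varepsilon}\to\rho$ to strong $C([0,T];H^3)$ convergence and pointwise a.e.\ convergence; Aubin-Lions for $\rho_{\mu,\varepsilon} u_{\mu,\varepsilon}$ between $W^{1,3/2}\hookrightarrow\hookrightarrow L^p$ ($p<3$) and $H^{-3}$ gives the stated strong convergence of the momentum. For $P(\rho_{\mu,\varepsilon})$ and $\rho_{\mu,\varepsilon}^{-6}$, combine the a.e.\ convergence with the $L^{5/3}(L^{5/3})$ equi-integrability via Egoroff (Lemma \ref{le3}) to produce strong $L^1(L^1)$ convergence. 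For $\nabla\Phi(\rho_{\mu,\varepsilon})\to\nabla\Phi(\rho)$, apply the elliptic estimate to the difference $\triangle(\Phi(\rho_{\mu,\varepsilon})-\Phi(\rho))=-4\pi G(\rho_{\mu,\varepsilon}-\rho)$ together with the $C([0,T];H^3)$ convergence of $\rho_{\mu,\varepsilon}$.

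Finally I would pass to the limit in the two nonlinear diffusion terms in the sense of distributions. For $\rho_{\mu,\varepsilon}\mathbb{D}u_{\mu,\varepsilon}$, I would follow verbatim the integration-by-parts identity (\ref{b32}) so that each piece is a product of a factor converging strongly in $C([0,T];H^3)$ (namely $\rho_{\mu,\varepsilon}$ or $\nabla\rho_{\mu,\varepsilon}$) and a factor converging weakly in $L^2(L^2)$ (namely $u_{\mu,\varepsilon}$ or $\rho_{\mu,\varepsilon} u_{\mu,\varepsilon}$ via the strong convergence of $\rho u$), which suffices for distributional limits. For $\delta\rho_{\mu,\varepsilon}\nabla\triangle^3\rho_{\mu,\varepsilon}$, I would imitate (\ref{b33}): after integrations by parts every term pairs $\triangle^2\rho_{\mu,\varepsilon}$, which converges weakly in $L^2(L^2)$, against a smooth factor built from $\rho_{\mu,\varepsilon}$ and the test function, which converges strongly in the complementary topology. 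I expect the main obstacle to be the momentum convergence $\rho_{\mu,\varepsilon} u_{\mu,\varepsilon}\to\rho u$ strongly in $L^2(0,T;L^p)$ for $p<3$: here the coupling between the $n$-uniform bound on $\nabla\sqrt{\rho}$ coming from the B-D entropy and the weak $L^2(L^2)$ bound on $u$ is essential in order to bootstrap $\rho u$ into a space that embeds compactly, and one must verify that the negative-order time-derivative bound does not deteriorate as $\mu,\varepsilon\to 0$; the vanishing-viscosity terms $\varepsilon\nabla\rho\cdot\nabla u$ and $\mu\triangle^2 u$ must be pushed to zero in $L^2(0,T;H^{-3})$ using the $\sqrt{\varepsilon}$ and $\sqrt{\mu}$ factors available in (\ref{c18}).
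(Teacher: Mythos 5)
Your proposal is correct and follows essentially the same route as the paper, which at this point simply declares that the compactness analysis of Section 3 is to be repeated and omits the details; you reconstruct exactly those arguments (uniform bounds, negative-order time-derivative estimates from the equations, Aubin--Lions, Egoroff for $P(\rho)$ and $\rho^{-6}$, elliptic regularity for $\Phi$, and the integration-by-parts identities (\ref{b32})--(\ref{b33}) for the diffusion terms). One small correction: the $\varepsilon$-uniform $L^{2}(0,T;H^{4})$ bound on $\rho_{\mu,\varepsilon}$ must be taken from the B-D entropy term $\delta\int_{0}^{T}\int|\triangle^{2}\rho_{\mu,\varepsilon}|^{2}\leq C(\delta,\eta,T)$ in (\ref{c17}), i.e.\ $\sqrt{\delta}\rho_{\mu,\varepsilon}\in L^{2}(0,T;H^{4})$ as listed in (\ref{c19}), and not from $\sqrt{\delta\varepsilon}\,\triangle^{2}\rho_{\mu,\varepsilon}\in L^{2}(L^{2})$, which degenerates as $\varepsilon\to0$.
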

\begin{proof}
The proof is similar to the compactness analysis in section 2, we repeat the compactness arguments here again and for the simplicity, we omit the details here.
\end{proof}

With the above compactness results in hand, then we pass to the limits as $\mu=\varepsilon\rightarrow0$, here we only focus on the terms involving with $\varepsilon$ and $\mu$.
Firstly, because $\rho_{\mu,\varepsilon}$ is bounded in $L^{\infty}(H^{3})\cap L^{2}(H^{4})$ uniformly on $\varepsilon$, we have
\begin{equation}\label{c22}
\begin{aligned}
\varepsilon\int_{0}^{T}\int \triangle \rho_{\mu,\varepsilon}\varphi\leq \varepsilon\|\triangle\rho_{\mu,\varepsilon}\|_{L^{2}(L^{2})}\|\varphi\|_{L^{2}(L^{2})}
\rightarrow0,\ \ as\ \ \varepsilon\rightarrow0,
\end{aligned}
\end{equation}
for any test function $\varphi\in C_{per}^{\infty}([0,T];\mathbb{T}^{3})$. So passing to the limits in $(\ref{b36})_{1}$ and using the Lemma \ref{l4}, we have
\begin{equation}\label{c23}
\rho_{t}+{\rm div}(\rho u)=0,\ \ a.e.\ \ in\  (0,T)\times \mathbb{T}^{3}.
\end{equation}
Similarly,
\begin{equation}\label{c24}
\begin{aligned}
\varepsilon\int_{0}^{T}\int& \nabla\rho_{\mu,\varepsilon}\cdot\nabla u_{\mu,\varepsilon}\varphi\\
&\leq \sqrt{\varepsilon}\|\nabla\rho_{\mu,\varepsilon}\|_{L^{2}(L^{2})}\|\sqrt{\varepsilon}\nabla u_{\mu,\varepsilon}\|
_{L^{2}(L^{2})}\|\varphi\|_{L^{\infty}(L^{\infty})}\rightarrow0,\ \ as\ \ \varepsilon\rightarrow0,
\end{aligned}
\end{equation}
and
\begin{equation}\label{c25}
\begin{aligned}
\mu\int_{0}^{T}\int\triangle^{2}u_{\mu,\varepsilon}\varphi
&=\mu\int_{0}^{T}\int\triangle
u_{\mu,\varepsilon}\triangle\varphi\\
&\leq \sqrt{\mu}\|\sqrt{\mu}\triangle u_{\mu,\varepsilon}\|_{L^{2}(L^{2})}\|\triangle \varphi\|_{L^{2}(L^{2})}\rightarrow0,\ as\ \mu\rightarrow0.
\end{aligned}
\end{equation}

So pass to the limits as $\mu=\varepsilon\rightarrow0$ in $(\ref{b36})$, we have
\begin{equation}\label{c26}
\begin{aligned}
&(\rho u)_{t}+{\rm div}(\rho u\otimes u)+\nabla P(\rho)-\eta\nabla\rho^{-6}-{\rm div}(\rho \mathbb{D}u)+ r_{0}u+r_{1}\rho|u|u\\
& -\delta\rho\nabla\triangle^{3}\rho=\rho\nabla\Phi,\ holds\ in\ the \ sense\ of\ distribution\ on\ (0,T)\times\mathbb{T}^{3},
\end{aligned}
\end{equation}
and
\begin{equation}\label{c27}
\triangle\Phi=-4\pi G(\rho-1),\ \ holds\ a.e.\ on\ (0,T)\times\mathbb{T}^{3}.
\end{equation}

Furthermore, thanks to lower semi-continuity of the convex function and the strong convergence of $\rho_{\mu,\varepsilon},u_{\mu,\varepsilon},\Phi(\rho_{\mu,\varepsilon})$, we can pass to the limits in the energy inequality (\ref{b35}) and Bresch-Desjardins entropy (\ref{c15}) as $\mu=\varepsilon\rightarrow0$ with $\delta,\eta,r_{0}$ being fixed,
\begin{equation}\label{c28}
\begin{aligned}
\int\frac{1}{2}\rho &u^{2}+\frac{1}{a\gamma(\gamma-1)}\rho^{\gamma}+\frac{\eta}{7}\rho^{-6}+\frac{\delta}{2}|\nabla
\triangle\rho|^{2}dx+\int_{0}^{T}\int\rho |\mathbb{D}u|^{2}\\
&\ \ \ \ +r_{0}\int_{0}^{T}\int|u|^{2}+r_{1}\int_{0}^{T}\int\rho|u|^{3}
\leq C(T),
\end{aligned}
\end{equation}
and
\begin{equation}\label{c29}
\begin{aligned}
&\frac{1}{2}\int \rho(u+\frac{\nabla\rho}{\rho})^{2}-r_{0}\log\rho dx+\frac{2}{3}\eta\int_{0}^{T}\int|\nabla
\rho^{-3}|^{2}+\frac{1}{2}\int_{0}^{T}\int \rho|\nabla u|^{2}\\
&\ \ \ \ \ \ \ \ \ \ \ \ \ \ \ +\frac{2}{a\gamma^{2}}\int_{0}^{T}\int|\nabla\rho^{\frac{\gamma}{2}}|^{2}+\delta\int_{0}^{T}
\int|\triangle^{2} \rho|^{2}\leq C(T),
\end{aligned}
\end{equation}
where $C(T)$ is a generic positive constant independent of $\varepsilon,\eta,\delta,r_{0}$, and we used the fact that $C_{\varepsilon}=C(\frac{\varepsilon}{\delta}+\frac{\varepsilon}{\eta})[1+(\frac{\varepsilon}{\delta}
+\frac{\varepsilon}{\eta})Te^{C(\frac{\varepsilon}{\delta}
+\frac{\varepsilon}{\eta})T}]T\rightarrow0$, as $\varepsilon\rightarrow0$.

Thus, to conclude this part, we have the following proposition
\begin{Proposition}\label{pro4}
There exists the weak solutions to the system (\ref{c23}), (\ref{c26}) and (\ref{c27}) with suitable initial data, for any $T>0$. In particular, the weak solutions $(\rho,u,\Phi)$ satisfy the energy inequality (\ref{c28}) and the B-D entropy (\ref{c29}).

\end{Proposition}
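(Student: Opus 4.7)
The plan is to take as input the sequence of weak solutions $(\rho_{\mu,\varepsilon},u_{\mu,\varepsilon},\Phi(\rho_{\mu,\varepsilon}))$ produced by Proposition \ref{pro3}, together with the energy estimate (\ref{b35}) and the B-D entropy (\ref{c15}) derived in Lemma \ref{l3}, and then pass to the limit $\mu=\varepsilon\to 0$ with $\delta,\eta,r_0$ held fixed. The crucial preliminary observation is that in the right-hand side of (\ref{c15}) and (\ref{b35}) we have $C_\varepsilon=C(\tfrac{\varepsilon}{\delta}+\tfrac{\varepsilon}{\eta})\bigl[1+(\tfrac{\varepsilon}{\delta}+\tfrac{\varepsilon}{\eta})Te^{C(\varepsilon/\delta+\varepsilon/\eta)T}\bigr]T\to 0$, so the $\mu,\varepsilon$-dependent right-hand side collapses to a constant $C(\delta,\eta,T)$, giving uniform bounds in the full class (\ref{c18})--(\ref{c19}). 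These are exactly the bounds needed to quote Lemma \ref{l4}, from which I would read off the strong and weak convergences required for the limit procedure.

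With Lemma \ref{l4} in hand, I would pass to the limit term-by-term in the weak formulation (\ref{34}). The genuinely nonlinear terms $\rho_{\mu,\varepsilon}u_{\mu,\varepsilon}\otimes u_{\mu,\varepsilon}$, $\rho_{\mu,\varepsilon}\mathbb{D}u_{\mu,\varepsilon}$, $\rho_{\mu,\varepsilon}|u_{\mu,\varepsilon}|u_{\mu,\varepsilon}$, $\rho_{\mu,\varepsilon}\nabla\triangle^{3}\rho_{\mu,\varepsilon}$, $P(\rho_{\mu,\varepsilon})$, $\rho_{\mu,\varepsilon}^{-6}$ and $\rho_{\mu,\varepsilon}\nabla\Phi(\rho_{\mu,\varepsilon})$ are handled by combining the strong convergence $\rho_{\mu,\varepsilon}\to\rho$ in $C([0,T];H^{3})$ with either the strong convergence $\rho_{\mu,\varepsilon}u_{\mu,\varepsilon}\to\rho u$ in $L^{2}(0,T;L^{p})$ for $1\leq p<3$, or weak convergence $u_{\mu,\varepsilon}\rightharpoonup u$ paired against strongly convergent factors after integration by parts (as in the distributional computations (\ref{b32})--(\ref{b33}) of Section 3). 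The three perturbation terms carrying the small parameters are shown to vanish by Hölder and the uniform bounds: $\varepsilon\int\!\!\int\triangle\rho_{\mu,\varepsilon}\varphi$, $\varepsilon\int\!\!\int\nabla\rho_{\mu,\varepsilon}\cdot\nabla u_{\mu,\varepsilon}\varphi$ and $\mu\int\!\!\int\triangle^{2}u_{\mu,\varepsilon}\varphi$ all admit factorizations in which the small parameter appears with a nonnegative power, c.f.\ (\ref{c22})--(\ref{c25}). This yields (\ref{c23}), (\ref{c26}) in $\mathcal{D}'$ and (\ref{c27}) almost everywhere.

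For the energy inequality (\ref{c28}) and the B-D entropy (\ref{c29}), I would integrate (\ref{b35}) and (\ref{c15}) in time against a nonnegative test function of $t$ alone, then use lower semicontinuity of the convex norms $\rho\mapsto\int\rho^\gamma$, $\rho\mapsto\int\rho^{-6}$, $\rho\mapsto\int|\nabla\triangle\rho|^2$, together with the almost-everywhere and weak-$*$ convergences of $\sqrt{\rho_{\mu,\varepsilon}}u_{\mu,\varepsilon}$, $\sqrt{\rho_{\mu,\varepsilon}}\nabla u_{\mu,\varepsilon}$, $\nabla\rho_{\mu,\varepsilon}^{\gamma/2}$, $\nabla\rho_{\mu,\varepsilon}^{-3}$ and $\rho_{\mu,\varepsilon}^{1/3}u_{\mu,\varepsilon}$ implied by (\ref{c18})--(\ref{c21}). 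The drag integrand $\rho|u|^3$ is handled via Fatou after passing to an a.e.-convergent subsequence of $\rho_{\mu,\varepsilon}^{1/3}u_{\mu,\varepsilon}$. Since $C_\varepsilon\to 0$, the right-hand sides of (\ref{b35}) and (\ref{c15}) reduce to $C(\delta,\eta,T)$ in the limit.

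The step I expect to be the most delicate is the passage to the limit in the capillarity-type term $-\delta\rho_{\mu,\varepsilon}\nabla\triangle^{3}\rho_{\mu,\varepsilon}$, because $\nabla\triangle^{3}\rho_{\mu,\varepsilon}$ is only controlled in $L^2(0,T;H^{-1})$ so it is not by itself a compact sequence; the argument must proceed in distributional form, shifting derivatives onto the test function as in (\ref{b33}) so that only the combination $\triangle^{2}\rho_{\mu,\varepsilon}\rightharpoonup\triangle^{2}\rho$ in $L^2(0,T;L^2)$ paired with a strongly convergent quantity built from $\rho_{\mu,\varepsilon}$ and $\varphi$ ever appears. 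The convective and drag terms, which require genuine strong $L^2$ convergence of $\sqrt{\rho_{\mu,\varepsilon}}u_{\mu,\varepsilon}$, are secondary obstacles but are already dispatched by the compactness statement $\rho_{\mu,\varepsilon}u_{\mu,\varepsilon}\to\rho u$ in $L^{2}(0,T;L^{p})$ together with the uniform $L^\infty(L^2)$ bound on $\sqrt{\rho_{\mu,\varepsilon}}u_{\mu,\varepsilon}$.
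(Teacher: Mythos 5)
Your proposal is correct and follows essentially the same route as the paper: it uses the uniform bounds from the energy inequality (\ref{b35}) and the B-D entropy (\ref{c15}) together with the observation that $C_{\varepsilon}\to 0$, invokes the compactness package of Lemma \ref{l4}, kills the $\varepsilon$- and $\mu$-terms exactly as in (\ref{c22})--(\ref{c25}), treats the high-order capillarity term distributionally as in (\ref{b33}), and recovers (\ref{c28})--(\ref{c29}) by lower semicontinuity. If anything, your write-up is more explicit than the paper's (which defers the compactness proof to the arguments of Section 3), so no substantive difference to report.
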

\section{Passing to the limits as $\eta\rightarrow0$.}
In this section, we pass to the limits as $\eta\rightarrow0$ with $\delta,r_{0}$ being fixed. we denote that $(\rho_{\eta},u_{\eta},\Phi(\rho_{\eta}))$ are weak solutions at this level, from the proposition \ref{pro4}, we have the following regularities
\begin{equation}\label{d1}
\begin{aligned}
&\sqrt{\rho_{\eta}}u_{\eta}\in L^{\infty}(0,T;L^{2}),\ \sqrt{\rho_{\eta}}\mathbb{D}u_{\eta}\in L^{2}(0,T;L^{2}),\\
&\nabla\sqrt{\rho_{\eta}}\in L^{\infty}(0,T;L^{2}),\ \sqrt{\delta}\rho_{\eta}\in L^{\infty}(0,T;H^{3}),\ \sqrt{\delta}\rho_{\eta}\in L^{2}(0,T;H^{4}),\\
&u_{\eta}\in L^{2}(0,T;L^{2}),\ \rho_{\eta}^{\frac{1}{3}}u_{\eta}\in L^{3}(0,T;L^{3}),\ \sqrt{\rho_{\eta}}\nabla u_{\eta}\in L^{2}(0,T;L^{2}),\\
&\rho_{\eta}^{\gamma}\in L^{\infty}(0,T;L^{1}),\ \nabla\rho_{\eta}^{\frac{\gamma}{2}}\in L^{2}(0,T;L^{2}),\\
&\eta\rho_{\eta}^{-6}\in L^{\infty}(0,T;L^{1}),\ \sqrt{\eta}\nabla \rho_{\eta}^{-3}\in L^{2}(0,T;L^{2}).\\
\end{aligned}
\end{equation}

So it's easy to check that we have the same estimates as in Lemma \ref{l4} at the level with $\eta$, thus we deduce the same compactness for $(\rho_{\eta},u_{\eta},\Phi(\rho_{\eta}))$ as follows
\begin{equation}\label{d2}
\begin{aligned}
&\rho_{\eta}\rightarrow \rho,\ a.e.\ and\ strongly\ in\  C([0,T];H^{3}),\ \rho_{\eta}\rightharpoonup \rho,\ in\ L^{2}(0,T;H^{4}),\ \\
&P(\rho_{\eta})\rightarrow P(\rho),\ a.e.\ and\ strongly\ in\  L^{1}(0,T;L^{1}),\\ &u_{\eta}\rightharpoonup u,\ in\ L^{2}(0,T;L^{2}),\\
&\rho_{\eta}u_{\eta}\rightarrow \rho u\ \ a.e.\ and\ strongly\ in\ L^{2}(0,T;L^{2}),\\
&\nabla\Phi(\rho_{\eta}) \rightarrow \nabla \Phi(\rho) \ strongly\ in\ L^{2}(0,T;L^{2}),\\
&-\rho_{\eta}\mathbb{D}u_{\eta}\rightarrow -\rho\mathbb{D}u,\ in\ the\ sense\ of\ distribution\ on\ (0,T)\times\mathbb{T}^{3},\\
&-\delta\rho_{\eta}\nabla\triangle^{3}\rho_{\eta}\rightarrow-\delta\rho
\nabla\triangle^{3}\rho,\ in\ the\ sense\ of\ distribution\ on\ (0,T)\times\mathbb{T}^{3}.\\
\end{aligned}
\end{equation}
 So at this level of approximation, we only focus on the convergence of the term $\eta\nabla\rho_{\eta}^{-6}$. Here we state the following Lemma.
 \begin{Lemma}\label{l5}
 For $\rho_{\eta}$ defined as in Proposition \ref{pro4}, we have
 $$\eta\int_{0}^{T}\int \rho_{\eta}^{-6}dxdt\rightarrow0$$
 as $\eta\rightarrow0$.
 \end{Lemma}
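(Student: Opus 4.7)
The approach combines an $L^{10/3}_{t,x}$ integrability for $\sqrt{\eta}\rho_\eta^{-3}$ obtained from the energy and B-D entropies with a quantitative, Chebyshev-type control of the sublevel sets $\{\rho_\eta<\delta\}$ provided by the logarithmic term $-r_0\log\rho_\eta$ appearing in (\ref{c29}).

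First, from (\ref{c28}) one gets $\sqrt{\eta}\,\rho_\eta^{-3}\in L^\infty(0,T;L^2(\mathbb{T}^3))$ uniformly in $\eta$, and from (\ref{c29}) one gets $\sqrt{\eta}\,\nabla\rho_\eta^{-3}\in L^2(0,T;L^2(\mathbb{T}^3))$ uniformly in $\eta$. Sobolev embedding $H^1\hookrightarrow L^6$ on $\mathbb{T}^3$ and the classical interpolation $L^\infty_tL^2_x\cap L^2_tL^6_x\hookrightarrow L^{10/3}_{t,x}$ then yield
\begin{equation*}
\|\sqrt{\eta}\,\rho_\eta^{-3}\|_{L^{10/3}((0,T)\times\mathbb{T}^3)}\leq C,\qquad\text{uniformly in }\eta.
\end{equation*}

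Second, the entropy (\ref{c29}) provides $r_0\int(-\log\rho_\eta)(t,\cdot)\,dx\leq C$ for a.e. $t\in(0,T)$. Using $\log s\leq s$ on $\{\rho_\eta\geq 1\}$ together with conservation of mass $\|\rho_\eta\|_{L^1}=M$, the (bounded) negative part is absorbed, and
\begin{equation*}
\int\log(1/\rho_\eta(t,x))\,\mathbf{1}_{\{\rho_\eta(t,\cdot)<1\}}(x)\,dx\leq C\qquad\text{uniformly in $\eta$ and a.e.\ $t$.}
\end{equation*}
Chebyshev's inequality applied slicewise and then integrated in $t$ gives, for every $\delta\in(0,1)$,
\begin{equation*}
\bigl|\{(t,x)\in(0,T)\times\mathbb{T}^3:\rho_\eta(t,x)<\delta\}\bigr|\leq\frac{CT}{\log(1/\delta)},
\end{equation*}
uniformly in $\eta$.

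Third, I split
\begin{align*}
\eta\int_0^T\!\!\int\rho_\eta^{-6}\,dx\,dt
&=\eta\iint_{\{\rho_\eta\geq\delta\}}\rho_\eta^{-6}\,dx\,dt+\iint_{\{\rho_\eta<\delta\}}\bigl(\sqrt{\eta}\rho_\eta^{-3}\bigr)^2\,dx\,dt\\
&\leq \eta\,\delta^{-6}\,T|\mathbb{T}^3|+\|\sqrt{\eta}\rho_\eta^{-3}\|_{L^{10/3}_{t,x}}^{2}\,\bigl|\{\rho_\eta<\delta\}\bigr|^{2/5}\\
&\leq \eta\,\delta^{-6}\,T|\mathbb{T}^3|+\frac{C}{(\log(1/\delta))^{2/5}},
\end{align*}
where H\"older with exponents $5/3$ and $5/2$ is used in the second line. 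Choosing $\delta=\eta^{\alpha}$ with $0<\alpha<1/6$ (for instance $\alpha=1/12$) makes both terms tend to zero, which yields the claim.

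The main obstacle is the second step: what is really needed is a quantitative bound on the measure of the near-vacuum region of $\rho_\eta$ that is uniform in $\eta$. The logarithmic term $-r_0\log\rho_\eta$ produced by the friction $r_0u$ in the derivation of the B-D entropy is precisely what delivers such a bound; without it the portion of $\eta\rho_\eta^{-6}$ concentrated where $\rho_\eta$ is small could stay of order one, and only the trivial bound $\eta\iint\rho_\eta^{-6}\leq CT$ would survive.
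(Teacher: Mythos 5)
Your proof is correct, and it is built from the same two uniform-in-$\eta$ a priori bounds that the paper's proof uses --- the control $\sup_{t}\int(\log(1/\rho_{\eta}))_{+}\,dx\leq C(r_{0})$ extracted from the $-r_{0}\log\rho_{\eta}$ term in (\ref{c29}), and the space-time integrability $\|\eta\rho_{\eta}^{-6}\|_{L^{5/3}_{t,x}}\leq C$ (equivalently your $L^{10/3}_{t,x}$ bound on $\sqrt{\eta}\,\rho_{\eta}^{-3}$), obtained by interpolating the $L^{\infty}_{t}L^{1}_{x}$ energy bound against the $L^{1}_{t}L^{3}_{x}$ bound coming from $\sqrt{\eta}\nabla\rho_{\eta}^{-3}\in L^{2}_{t,x}$ --- but you combine them along a genuinely different route. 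The paper argues by soft compactness: convexity of $y\mapsto(\log(1/y))_{+}$ and Fatou's lemma show that the limit density $\rho$ has a vacuum set of measure zero for a.e.\ $t$; the strong convergence $\rho_{\eta}\to\rho$ then gives $\eta\rho_{\eta}^{-6}\to 0$ a.e.; and Egoroff's theorem (Lemma \ref{le3}) together with the uniform $L^{5/3}$ bound upgrades this to convergence in $L^{1}(0,T;L^{1})$. You instead work entirely at fixed $\eta$: Chebyshev applied slicewise to the logarithmic bound gives $|\{\rho_{\eta}<\delta\}|\leq CT/\log(1/\delta)$ uniformly in $\eta$, and the splitting with H\"older (exponents $5/3$ and $5/2$) plus the choice $\delta=\eta^{1/12}$ closes the argument. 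Your version buys two things: it never invokes the compactness of $\rho_{\eta}$, so it is self-contained given only the energy and B-D estimates, and it yields an explicit rate $O\bigl(\eta^{1/2}+(\log(1/\eta))^{-2/5}\bigr)$. The paper's version is softer but requires no quantification and slots naturally into the surrounding compactness framework, where the a.e.\ convergence of $\rho_{\eta}$ is already available. Both arguments correctly pin the drag term $r_{0}u$ (through the $-r_{0}\log\rho_{\eta}$ entropy contribution) as the mechanism preventing $\eta\rho_{\eta}^{-6}$ from concentrating on the near-vacuum region.
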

 \begin{proof}
 The proof is inspired by Vasseur and Yu in \cite{vy2015}. From the B-D entropy (\ref{c29}), we have
 \begin{equation}\label{d3}
 \sup_{t\in[0,T]}\int (\log(\frac{1}{\rho_{\eta}}))_{+}dx\leq C(r_{0})<+\infty.
 \end{equation}
 Note that
 $$y\in \mathbb{R}^{+}\rightarrow\log(\frac{1}{y})_{+}$$
 is a convex continuous function. Moreover, in combination with the property of the convex function and Fatou's Lemma, yields
 \begin{equation}\label{d4}
 \begin{aligned}
 \int(\log(\frac{1}{\rho}))_{+}dx&\leq \int \lim_{\eta\rightarrow0} \inf(\log(\frac{1}{\rho_{\eta}}))_{+}dx\\
 &\leq \lim_{\eta\rightarrow0}\inf \int (\log(\frac{1}{\rho_{\eta}}))_{+}dx,
 \end{aligned}
 \end{equation}
 which implies $(\log(\frac{1}{\rho}))_{+}$ is bounded in $L^{\infty}(0,T;L^{1})$, so it allows us to deduce that
 \begin{equation}\label{d5}
 |\{x\ |\ \rho(t,x)=0\}|=0,\ \ for\ almost\ every\ t\in [0,T],
 \end{equation}
 where $|A|$ denotes the measure of set $A$.

 Thanks to the compactness of the density: $\rho_{\eta}\rightarrow\rho$ strongly in $C([0,T];H^{3})$, hence $\rho_{\eta}\rightarrow\rho$ a.e., then together with (\ref{d5}), we deduce
 \begin{equation}\label{d6}
 \eta\rho_{\eta}^{-6}\rightarrow0\ \ a.e.
 \end{equation}

 Moreover,using the interpolation inequality, yields
 $$\|\eta\rho_{\eta}^{-6}\|_{L^{\frac{5}{3}}(0,T;L^{\frac{5}{3}})}\leq \|\eta\rho_{\eta}^{-6}\|_{L^{\infty}(0,T;L^{1})}^{\frac{2}{5}}
 \|\eta\rho_{\eta}^{-6}\|_{L^{1}(0,T;L^{3})}^{\frac{3}{5}}\leq C,$$
 this together with (\ref{d6}) and using the Eogroffs theorem, yields
 $$\eta\rho_{\eta}^{-6}\rightarrow0,\ strongly\ in\ L^{1}(0,T;L^{1}).$$
 \end{proof}

 Thus, by using the compactness results (\ref{d2}), we can pass to the limit as $\eta\rightarrow0$ in (\ref{c23}),(\ref{c26}) and (\ref{c27}), yields
 \begin{equation}\label{d7}
 \begin{aligned}
 &\rho_{t}+{\rm div}(\rho u)=0,\ \ holds\ a.e.\ on\ (0,T)\times\mathbb{T}^{3},\\
&(\rho u)_{t}+{\rm div}(\rho u\otimes u)+\nabla P(\rho)-{\rm div}(\rho \mathbb{D}u)+ r_{0}u+r_{1}\rho|u|u\\
& -\delta\rho\nabla\triangle^{3}\rho=\rho\nabla\Phi,\ holds\ in\ the \ sense\ of\ distribution\ on\ (0,T)\times\mathbb{T}^{3},\\
&\triangle\Phi=-4\pi G(\rho-1),\ \ holds\ a.e.\ on\ (0,T)\times\mathbb{T}^{3}.
  \end{aligned}
 \end{equation}
 Similarly, due to the lower semi-continuity of convex functions, we can obtain the energy inequality and B-D entropy by passing to the limits in (\ref{c28}) and (\ref{c29}) as $\eta\rightarrow0$, we have
 \begin{equation}\label{d8}
\begin{aligned}
\int\frac{1}{2}\rho &u^{2}+\frac{1}{a\gamma(\gamma-1)}\rho^{\gamma}+\frac{\delta}{2}|\nabla
\triangle\rho|^{2}dx+\int_{0}^{T}\int\rho |\mathbb{D}u|^{2}\\
&\ \ \ \ +r_{0}\int_{0}^{T}\int|u|^{2}+r_{1}\int_{0}^{T}\int\rho|u|^{3}
\leq C(T),
\end{aligned}
\end{equation}
and
\begin{equation}\label{d9}
\begin{aligned}
&\frac{1}{2}\int \rho(u+\frac{\nabla\rho}{\rho})^{2}-r_{0}\log\rho dx+\frac{1}{2}\int_{0}^{T}\int \rho|\nabla u|^{2}\\
&\ \ \ \ \ \ \ \ \ \ \ \ \ \ \ +\frac{2}{a\gamma^{2}}\int_{0}^{T}\int|\nabla\rho^{\frac{\gamma}{2}}|^{2}+\delta\int_{0}^{T}
\int|\triangle^{2} \rho|^{2}\leq C(T),
\end{aligned}
\end{equation}
Thus we have the following Proposition on the existence of the weak solutions at this level of approximation.
\begin{Proposition}\label{pro5}
There exist weak solutions to the system (\ref{d7}) with suitable initial data, for any $T>0$. In particular, the weak solutions $(\rho,u,\Phi(\rho))$ satisfy the energy inequality (\ref{d8}) and the B-D entropy (\ref{d9}).
\end{Proposition}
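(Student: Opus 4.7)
The plan is to obtain the limit $(\rho,u,\Phi)$ as an accumulation point of the family $(\rho_\eta,u_\eta,\Phi(\rho_\eta))$ produced by Proposition~\ref{pro4}, identify every term of (\ref{d7}) in the weak formulation, and then recover (\ref{d8})-(\ref{d9}) by lower semicontinuity. The bounds (\ref{c28})-(\ref{c29}) are uniform in $\eta$ with $\delta,r_0$ fixed, so all regularities listed in (\ref{d1}) are $\eta$-independent: the energy inequality provides $L^\infty(0,T;L^2)$ control on $\sqrt{\rho_\eta} u_\eta$ and $\sqrt{\delta}\rho_\eta\in L^\infty(0,T;H^3)$, while the B-D entropy (\ref{c29}) adds $\sqrt{\delta}\rho_\eta\in L^2(0,T;H^4)$, $\nabla\rho_\eta^{\gamma/2}\in L^2(0,T;L^2)$ and $\sqrt{\rho_\eta}\nabla u_\eta\in L^2(0,T;L^2)$.

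With these bounds, the first concrete step is to estimate $\partial_t \rho_\eta$ in $L^2(0,T;H^{-1})$ from the mass equation and $\partial_t(\rho_\eta u_\eta)$ in $L^2(0,T;H^{-3})$ from the momentum equation, and then to apply Aubin-Lions (Lemma~\ref{le2}). This yields the strong convergences in (\ref{d2}), in particular $\rho_\eta\to\rho$ in $C([0,T];H^3)$ and $\rho_\eta u_\eta\to\rho u$ in $L^2(0,T;L^2)$. The convective term, the viscous term $\rho\mathbb{D}u$, the Korteweg-type term $\delta\rho\nabla\triangle^3\rho$ and the damping terms pass in the distributional sense by the same line of argument as in the bookkeeping of Sections~3.3 and~4. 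The pressure identifies via the growth condition (\ref{a3}) combined with Egorov's theorem (Lemma~\ref{le3}), and the Poisson term passes via the elliptic estimate $\|\nabla\Phi\|_{L^2}\le C\|\rho-1\|_{L^{6/5}}$ already used in Section~3.3.1, so that $\nabla\Phi(\rho_\eta)\to\nabla\Phi(\rho)$ strongly in $L^2(0,T;L^2)$.

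The only structurally new point is the vanishing of the cold-pressure term $\eta\nabla\rho_\eta^{-6}$, established by Lemma~\ref{l5}. The mechanism I would follow is: the $-r_0\log\rho$ contribution in (\ref{c29}) gives a uniform $L^\infty(0,T;L^1)$ bound for $(\log(1/\rho_\eta))_+$; convexity of $y\mapsto(\log(1/y))_+$ together with Fatou's lemma transfers this bound to the limit $\rho$, which forces $|\{\rho(t,\cdot)=0\}|=0$ for a.e.~$t$. Combined with $\rho_\eta\to\rho$ a.e., this gives $\eta\rho_\eta^{-6}\to 0$ a.e.; an interpolation between $\eta\rho_\eta^{-6}\in L^\infty(0,T;L^1)$ (from the energy) and a Sobolev bound in $L^1(0,T;L^3)$ yields $\eta\rho_\eta^{-6}\in L^{5/3}(0,T;L^{5/3})$ uniformly, and Egorov upgrades the pointwise convergence to strong convergence in $L^1(0,T;L^1)$, killing the term against any test function.

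Once all the limits are taken, the energy inequality (\ref{d8}) and B-D entropy (\ref{d9}) follow from weak/weak-$*$ lower semicontinuity applied to the quadratic quantities $\sqrt{\rho_\eta}u_\eta$, $\sqrt{\rho_\eta}\mathbb{D}u_\eta$, $\sqrt{\rho_\eta}\nabla u_\eta$, $\nabla\rho_\eta^{\gamma/2}$ and $\triangle^2\rho_\eta$, together with the strong convergences of $\rho^\gamma_\eta$ and $\nabla\Phi(\rho_\eta)$; note that the nonnegative term $\frac{\eta}{7}\rho_\eta^{-6}$ can simply be dropped on the left-hand side, preserving the inequality. The main obstacle of the whole argument is Step~3: it is the only place where one must exploit the precise structural interplay between the $r_0 u$ regularization (whose role is exactly to produce the $-r_0\log\rho$ term in the B-D entropy) and the cold pressure, in order to guarantee that the weak limit density stays positive almost everywhere; every other convergence reduces to a reapplication of compactness arguments already carried out at the previous level.
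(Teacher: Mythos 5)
Your proposal is correct and follows essentially the same route as the paper: the same Aubin--Lions compactness package inherited from the previous level, the same identification of the cold-pressure term as the only new difficulty, and the same mechanism (the $-r_{0}\log\rho$ control from the B-D entropy, convexity plus Fatou to rule out a vacuum set of positive measure, interpolation of $\eta\rho_{\eta}^{-6}$ in $L^{5/3}$, and Egorov to upgrade to strong $L^{1}$ convergence), with (\ref{d8})--(\ref{d9}) recovered by lower semicontinuity exactly as in Section 5.
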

\section{Passing to the limits as $\delta,r_{0}\rightarrow0$.}
At this level, the weak solutions satisfy the energy inequality (\ref{d8}) and the B-D entropy (\ref{d9}), thus we have the following regularities:
\begin{equation}\label{e1}
\begin{aligned}
&\sqrt{\rho_{\delta,r_{0}}}u_{\rho_{\delta,r_{0}}}\in L^{\infty}(0,T;L^{2}),\ \sqrt{\rho_{\rho_{\delta,r_{0}}}}\mathbb{D}u_{\rho_{\delta,r_{0}}}\in L^{2}(0,T;L^{2}),\ \\
&\nabla \sqrt{\rho_{\rho_{\delta,r_{0}}}}\in L^{\infty}(0,T;L^{2}),\ \sqrt{\delta}\rho_{\rho_{\delta,r_{0}}}\in L^{\infty}(0,T;H^{3})\cap L^{2}(0,T;H^{4}),\\
&\rho_{\rho_{\delta,r_{0}}}^{\gamma}\in L^{\infty}(0,T;L^{1}),\ \nabla\rho_{\rho_{\delta,r_{0}}}^{\frac{\gamma}{2}}\in L^{2}(0,T;L^{2}),\ \sqrt{\rho_{\rho_{\delta,r_{0}}}}\nabla u_{\rho_{\delta,r_{0}}}\in L^{2}(0,T;L^{2}),\\
&\sqrt{r_{0}}u_{\rho_{\delta,r_{0}}}\in L^{2}(0,T;L^{2}),\ \rho_{\rho_{\delta,r_{0}}}^{\frac{1}{3}}u_{\rho_{\delta,r_{0}}}\in L^{3}(0,T;L^{3}),
\end{aligned}
\end{equation}

Next, we will proceed the compactness arguments in several steps
\subsection{ Step 1: Convergence of $\sqrt{\rho_{\rho_{\delta,r_{0}}}}$.}
\begin{Lemma}\label{l6}
Let $(\rho_{\rho_{\delta,r_{0}}},u_{\rho_{\delta,r_{0}}},\Phi(\rho_{\rho_{\delta,r_{0}}}))$ satisfy the Proposition \ref{pro5}, we have
$$\sqrt{\rho_{\rho_{\delta,r_{0}}}} is\ bounded\  in\  L^{\infty}(0,T;H^{1}),$$
$$\partial_{t}\sqrt{\rho_{\rho_{\delta,r_{0}}}} is\ bounded\  in\  L^{2}(0,T;H^{-1}).$$
As a consequence, up to a subsequence, $\sqrt{\rho_{\rho_{\delta,r_{0}}}}$ convergences almost everywhere and strongly in $L^{2}(0,T;L^{2})$, which means
$$\sqrt{\rho_{\rho_{\delta,r_{0}}}}\rightarrow \sqrt{\rho},\ \ a.e.\ and\ strongly\ in\ L^{2}(0,T;L^{2}).$$
Moreover, we have
$$\rho_{\rho_{\delta,r_{0}}}\rightarrow \rho\ \ a.e.\ and\ strongly\ in\ C([0,T];L^{p}),\ \ for \ any\ p\in [1,3).$$

\end{Lemma}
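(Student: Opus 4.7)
The proof combines the Bresch--Desjardins estimate with the continuity equation via the Aubin--Lions lemma, in the spirit of \cite{vy2014,vy2015}.

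First, I would establish $\sqrt{\rho_{\delta,r_{0}}}\in L^{\infty}(0,T;H^{1})$ uniformly in $\delta,r_{0}$. The $L^{2}$-control follows from conservation of mass,
\begin{equation*}
\|\sqrt{\rho_{\delta,r_{0}}}(t)\|_{L^{2}}^{2}=\int_{\mathbb{T}^{3}}\rho_{\delta,r_{0}}(t,x)\,dx=\int_{\mathbb{T}^{3}}\rho_{0}\,dx,
\end{equation*}
while the gradient bound $\nabla\sqrt{\rho_{\delta,r_{0}}}\in L^{\infty}(0,T;L^{2})$ is already listed in (\ref{e1}) as a direct consequence of the B--D entropy (\ref{d9}).

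Next I would differentiate the mass equation $\partial_{t}\rho_{\delta,r_{0}}+{\rm div}(\rho_{\delta,r_{0}}u_{\delta,r_{0}})=0$ in (\ref{d7}) to obtain
\begin{equation*}
\partial_{t}\sqrt{\rho_{\delta,r_{0}}}=-{\rm div}\bigl(\sqrt{\rho_{\delta,r_{0}}}\,u_{\delta,r_{0}}\bigr)+\tfrac{1}{2}\sqrt{\rho_{\delta,r_{0}}}\,{\rm div}\,u_{\delta,r_{0}}.
\end{equation*}
The first right-hand term belongs to $L^{\infty}(0,T;H^{-1})$ since $\sqrt{\rho_{\delta,r_{0}}}u_{\delta,r_{0}}\in L^{\infty}(0,T;L^{2})$ by (\ref{d8}); the second lies in $L^{2}(0,T;L^{2})\hookrightarrow L^{2}(0,T;H^{-1})$ because $\sqrt{\rho_{\delta,r_{0}}}\nabla u_{\delta,r_{0}}\in L^{2}(0,T;L^{2})$ by (\ref{e1}). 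Hence $\partial_{t}\sqrt{\rho_{\delta,r_{0}}}$ is bounded in $L^{2}(0,T;H^{-1})$.

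With both bounds in hand, I would apply the Aubin--Lions lemma (Lemma \ref{le2}) with $X_{0}=H^{1}(\mathbb{T}^{3})$, $X=L^{p}(\mathbb{T}^{3})$ for any $p\in[1,6)$, and $X_{1}=H^{-1}(\mathbb{T}^{3})$, to extract a subsequence with
\begin{equation*}
\sqrt{\rho_{\delta,r_{0}}}\longrightarrow\sqrt{\rho}\quad\text{strongly in }C([0,T];L^{p}(\mathbb{T}^{3})),\ \ p\in[1,6),
\end{equation*}
and hence a.e. on $(0,T)\times\mathbb{T}^{3}$. In particular, the required strong convergence in $L^{2}(0,T;L^{2})$ holds. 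Squaring the limit and using the uniform bound $\sqrt{\rho_{\delta,r_{0}}}\in L^{\infty}(0,T;L^{6})$ from Sobolev embedding (which gives $\rho_{\delta,r_{0}}\in L^{\infty}(0,T;L^{3})$ uniformly) together with Egoroff's theorem (Lemma \ref{le3}) for the $L^{q}$-integrability, I would conclude $\rho_{\delta,r_{0}}\to\rho$ in $C([0,T];L^{q})$ for every $q\in[1,3)$.

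The principal obstacle is the rigorous derivation of the identity for $\partial_{t}\sqrt{\rho_{\delta,r_{0}}}$, since a priori the density could vanish. At the present approximation level the term $-r_{0}\log\rho_{\delta,r_{0}}$ in (\ref{d9}) still keeps $\rho_{\delta,r_{0}}$ strictly positive almost everywhere, but the uniformity in $\delta,r_{0}$ is not available; the cleanest route is therefore to regularize by considering $\sqrt{\rho_{\delta,r_{0}}+\kappa}$, deduce the analogue of the displayed identity, and pass to the limit $\kappa\downarrow 0$ using the uniform bounds on $\sqrt{\rho_{\delta,r_{0}}}u_{\delta,r_{0}}$ and $\sqrt{\rho_{\delta,r_{0}}}\nabla u_{\delta,r_{0}}$. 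Once this justification is in place, the remaining steps reduce to standard Sobolev bookkeeping.
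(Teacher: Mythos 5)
Your proposal is correct and follows essentially the same route as the paper: conservation of mass plus the B--D bound give the uniform $L^{\infty}(0,T;H^{1})$ control, the identity $\partial_{t}\sqrt{\rho_{\delta,r_{0}}}=\frac{1}{2}\sqrt{\rho_{\delta,r_{0}}}\,{\rm div}\,u_{\delta,r_{0}}-{\rm div}(\sqrt{\rho_{\delta,r_{0}}}\,u_{\delta,r_{0}})$ gives the $L^{2}(0,T;H^{-1})$ bound on the time derivative, and Aubin--Lions yields the compactness. The one place you genuinely diverge is the final claim: the paper does not square the limit, but instead shows $\rho_{\delta,r_{0}}\in L^{\infty}(0,T;W^{1,\frac{3}{2}})$ (via $\nabla\rho_{\delta,r_{0}}=2\sqrt{\rho_{\delta,r_{0}}}\nabla\sqrt{\rho_{\delta,r_{0}}}$) and $\partial_{t}\rho_{\delta,r_{0}}\in L^{\infty}(0,T;W^{-1,\frac{3}{2}})$ (via $\rho_{\delta,r_{0}}u_{\delta,r_{0}}=\sqrt{\rho_{\delta,r_{0}}}\cdot\sqrt{\rho_{\delta,r_{0}}}u_{\delta,r_{0}}\in L^{\infty}(0,T;L^{\frac{3}{2}})$), and applies Aubin--Lions a second time to land in $C([0,T];L^{p})$, $p<3$. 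Your alternative --- upgrading $\sqrt{\rho_{\delta,r_{0}}}\to\sqrt{\rho}$ to $C([0,T];L^{r})$ for $r<6$ by part (ii) of Lemma \ref{le2} and then writing $\rho_{\delta,r_{0}}-\rho=(\sqrt{\rho_{\delta,r_{0}}}-\sqrt{\rho})(\sqrt{\rho_{\delta,r_{0}}}+\sqrt{\rho})$ with H\"older against the uniform $L^{\infty}(0,T;L^{6})$ bound --- works equally well and avoids the second Aubin--Lions application; note, though, that Egoroff is not the right tool for uniform-in-time convergence, so the H\"older product argument is what actually carries that step. Your remark that the chain rule for $\sqrt{\rho_{\delta,r_{0}}}$ needs justification near vacuum (e.g.\ by a $\sqrt{\rho_{\delta,r_{0}}+\kappa}$ regularization) is a legitimate point of care that the paper passes over in silence.
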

\begin{proof}
In combination with the conservation of the mass $\|\rho_{\delta,r_{0}}(t)\|_{L^{1}}=\|\rho_{\rho_{\delta,r_{0}}}(0)\|_{L^{1}}$ and estimate in (\ref{e1}) gives $\sqrt{\rho_{\rho_{\delta,r_{0}}}} \in L^{\infty}(0,T;H^{1})$. Next, we notice that
\begin{equation}\label{e2}
\begin{aligned}
\partial_{t}\sqrt{\rho_{\delta,r_{0}}}&=-\frac{1}{2}\sqrt{\rho_{\delta,r_{0}}}{\rm div} u_{\delta,r_{0}}-u_{\delta,r_{0}}\nabla\sqrt{\rho_{\delta,r_{0}}}\\
&=\frac{1}{2}\sqrt{\rho_{\delta,r_{0}}}{\rm div} u_{\delta,r_{0}}-{\rm div}(u_{\delta,r_{0}}\sqrt{\rho_{\delta,r_{0}}}),
\end{aligned}
\end{equation}
which yields $\partial_{t}\sqrt{\rho_{\delta,r_{0}}}\in L^{2}(0,T;H^{-1})$, thanks to the Aubin-Lions Lemma, we have
$$\sqrt{\rho_{\delta,r_{0}}}\rightarrow \sqrt{\rho},\ \  strongly\ in\ L^{2}(0,T;L^{2}),$$
and hence yields $\sqrt{\rho_{\delta,r_{0}}}\rightarrow \sqrt{\rho}\ a.e.$.
In another hand, since $\nabla{\sqrt{\rho_{\delta,r_{0}}}}$ is bounded in $L^{\infty}(0,T;L^{2})$, by using the Sobolev embedding theorem, we have
$$\|\sqrt{\rho_{\delta,r_{0}}}\|_{L^{\infty}(L^{6})}\leq C\|\nabla\sqrt{\rho_{\delta,r_{0}}}\|_{L^{\infty}(L^{2})}<+\infty,$$
so
\begin{equation}\label{e3}
\begin{aligned}
\rho_{\delta,r_{0}}u_{\delta,r_{0}}=\sqrt{\rho_{\delta,r_{0}}}\sqrt{\rho_{\delta,r_{0}}}
u_{\delta,r_{0}}\in L^{\infty}(0,T;L^{\frac{3}{2}}),
\end{aligned}
\end{equation}
which yields that ${\rm div}(\rho_{\delta,r_{0}}u_{\delta,r_{0}})\in L^{\infty}(0,T;W^{-1,\frac{3}{2}})$, so the continuity equation yields $\partial_{t}\rho_{\delta,r_{0}}\in L^{\infty}(0,T;W^{-1,\frac{3}{2}})$.
Furthermore, because
\begin{equation}\label{e4}
\begin{aligned}
\nabla{\rho_{\delta,r_{0}}}=2\sqrt{\rho_{\delta,r_{0}}}\nabla{\sqrt{\rho_{\delta,r_{0}}}}\in L^{\infty}(0,T;L^{\frac{3}{2}}),
\end{aligned}
\end{equation}
 so we have $\rho_{\delta,r_{0}}$ is bounded in $L^{\infty}(0,T;W^{1,\frac{3}{2}})$.

  Then together with $\partial_{t}\rho_{\delta,r_{0}}\in L^{\infty}(0,T;W^{-1,\frac{3}{2}})$, thanks to the Aubin-Lions Lemma gives
\begin{equation}\label{e6}
\rho_{\delta,r_{0}}\rightarrow \rho,\ strongly\ in\ C([0,T];L^{p}),\ for\ any\ p\in[1,3),
\end{equation}
and hence, we have
$$\rho_{\delta,r_{0}}\rightarrow \rho\ a.e..$$
Thus the proof of this Lemma is completed.
\end{proof}
\subsection{ Step 2: Convergence of the pressure.}
\begin{Lemma}\label{l7}
The pressure $P(\rho_{\delta,r_{0}})$ satisfies the following regularity:
$$P(\rho_{\delta,r_{0}})\in L^{\frac{5}{3}}(0,T;L^{\frac{5}{3}}),$$
and up to subsequence, we have
$$P(\rho_{\delta,r_{0}})\rightarrow P(\rho)\ \ a.e.,$$
and
$$P(\rho_{\delta,r_{0}})\rightarrow P(\rho)\ \ strongly\ in\ L^{1}(0,T;L^{1}).$$

\end{Lemma}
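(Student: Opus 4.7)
The plan is to first establish the uniform $L^{5/3}$ bound on $P(\rho_{\delta,r_0})$ by interpolation, then promote the almost-everywhere convergence inherited from Lemma \ref{l6} to strong $L^1$ convergence via the Egoroff-type statement in Lemma \ref{le3}.

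First I would upgrade the information on $\rho_{\delta,r_0}^{\gamma/2}$. The energy inequality (\ref{d8}) controls $\rho_{\delta,r_0}^\gamma$ in $L^\infty(0,T;L^1)$, i.e.\ $\rho_{\delta,r_0}^{\gamma/2}$ in $L^\infty(0,T;L^2)$, while the B-D entropy (\ref{d9}) gives $\nabla \rho_{\delta,r_0}^{\gamma/2}$ in $L^2(0,T;L^2)$. Sobolev embedding then yields $\rho_{\delta,r_0}^{\gamma/2} \in L^2(0,T;L^6)$, hence $\rho_{\delta,r_0}^{\gamma} \in L^1(0,T;L^3)$. Interpolating this with the $L^\infty(0,T;L^1)$ bound in the same way as in the proof of Lemma \ref{l1}, namely
\begin{equation*}
\|\rho_{\delta,r_0}^\gamma\|_{L^{5/3}(0,T;L^{5/3})}
\leq \|\rho_{\delta,r_0}^\gamma\|_{L^\infty(0,T;L^1)}^{2/5}
\|\rho_{\delta,r_0}^\gamma\|_{L^1(0,T;L^3)}^{3/5},
\end{equation*}
gives a uniform bound independent of $\delta$ and $r_0$. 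The hypothesis (\ref{a3}) integrated from $0$ to $\rho$ yields the growth estimate $|P(z)| \leq C(z^\gamma + z)$, so combining with the conservation of mass and the above bound I obtain $P(\rho_{\delta,r_0}) \in L^{5/3}(0,T;L^{5/3})$ uniformly.

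Next I would pass to the almost-everywhere convergence. Lemma \ref{l6} delivers $\rho_{\delta,r_0} \to \rho$ a.e. on $(0,T)\times\mathbb{T}^3$ (up to a subsequence), and since $P \in C^1(\mathbb{R}_+)$ with $P(0)=0$ it is continuous on $[0,\infty)$, so $P(\rho_{\delta,r_0}) \to P(\rho)$ a.e.

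Finally, I would apply Lemma \ref{le3}: the sequence $\{P(\rho_{\delta,r_0})\}$ converges a.e.\ on the bounded set $(0,T)\times\mathbb{T}^3$ and is uniformly bounded in $L^{5/3}$, so by Egoroff's theorem combined with the interpolation argument given in the proof of Lemma \ref{le3}, it converges strongly in $L^s$ for every $s \in [1,5/3)$, and in particular
\begin{equation*}
P(\rho_{\delta,r_0}) \to P(\rho) \quad \text{strongly in } L^1(0,T;L^1).
\end{equation*}
The main technical point is securing the $L^{5/3}$ uniform bound; once that is in hand, the remainder is a direct application of the compactness machinery already collected in Section 2 and of the density compactness of Lemma \ref{l6}, so no new obstacle is expected.
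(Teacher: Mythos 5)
Your proposal is correct and follows exactly the route the paper intends: the paper's own proof of this lemma simply says it is ``the same as in section 2,'' i.e.\ the argument of Lemma \ref{l1}, which is precisely what you reproduce (interpolation of $\rho_{\delta,r_0}^{\gamma}$ between $L^{\infty}(0,T;L^{1})$ and $L^{1}(0,T;L^{3})$ via the B-D bound on $\nabla\rho_{\delta,r_0}^{\gamma/2}$, the growth bound $|P(z)|\leq C(z^{\gamma}+z)$ from (\ref{a3}), a.e.\ convergence from Lemma \ref{l6} and continuity of $P$, then Lemma \ref{le3}). No discrepancy to report.
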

\begin{proof}
The proof is as the same as it in section 2, so we omit the details here.
\end{proof}
\subsection{Step 3: Convergence of the momentum.}
\begin{Lemma}\label{l8}
Up to a subsequence, the momentum $m_{\delta,r_{0}}=\rho_{\delta,r_{0}}u_{\delta,r_{0}}$
 converges strongly in $L^{2}(0,T;L^{q})$ to some $m(x,t)$ for all $q\in [1,\frac{3}{2})$. In particular
 $$\rho_{\delta,r_{0}}u_{\delta,r_{0}}\rightarrow m\ a.e. \ for\ (x,t)\in \mathbb{T}^{3}\times(0,T).$$
 Note that we can define $u(x,t)=m(x,t)/\rho(x,t)$ outside the vacuum set $\{x|\rho(x,t)=0\}$.
\end{Lemma}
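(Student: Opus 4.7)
The plan is to extract a strongly convergent subsequence of $m_{\delta,r_0} = \rho_{\delta,r_0}u_{\delta,r_0}$ via the Aubin-Lions lemma, by establishing uniform spatial and temporal regularity of $m_{\delta,r_0}$ in the parameters $\delta, r_0$.

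For the spatial bound, I will use the decomposition
\[
\nabla(\rho_{\delta,r_0} u_{\delta,r_0}) = 2\,(\sqrt{\rho_{\delta,r_0}}\,u_{\delta,r_0})\otimes\nabla\sqrt{\rho_{\delta,r_0}} + \sqrt{\rho_{\delta,r_0}}\,(\sqrt{\rho_{\delta,r_0}}\nabla u_{\delta,r_0}).
\]
The first product is uniformly bounded in $L^\infty(0,T;L^1)$ by combining $\sqrt{\rho_{\delta,r_0}}u_{\delta,r_0}\in L^\infty L^2$ and $\nabla\sqrt{\rho_{\delta,r_0}}\in L^\infty L^2$ from (\ref{e1}); the second is uniformly bounded in $L^2(0,T;L^{3/2})$ by combining $\sqrt{\rho_{\delta,r_0}}\in L^\infty L^6$ (Sobolev embedding) with $\sqrt{\rho_{\delta,r_0}}\nabla u_{\delta,r_0}\in L^2 L^2$. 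Hence $\nabla m_{\delta,r_0}$ is uniformly bounded in $L^2(0,T;L^1(\mathbb{T}^3))$, while $m_{\delta,r_0}=\sqrt{\rho_{\delta,r_0}}\cdot\sqrt{\rho_{\delta,r_0}}u_{\delta,r_0}\in L^\infty(0,T;L^{3/2})$, so altogether $m_{\delta,r_0}\in L^2(0,T;W^{1,1}(\mathbb{T}^3))$ uniformly.

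For the temporal bound, I will read $\partial_t m_{\delta,r_0}$ off the momentum equation in (\ref{d7}) and estimate each term in a fixed negative Sobolev space. The convective flux $\rho u\otimes u=(\sqrt{\rho}u)\otimes(\sqrt{\rho}u)\in L^\infty L^1$; the pressure $P(\rho_{\delta,r_0})\in L^{5/3}(0,T;L^{5/3})$ by Lemma \ref{l7}; the viscous stress $\rho\mathbb{D}u=\sqrt{\rho}\cdot\sqrt{\rho}\mathbb{D}u\in L^2 L^{3/2}$; the damping pair satisfies $r_0 u\in L^2 L^2$ and $r_1\rho|u|u\in L^\infty L^1$ via $(\sqrt{\rho}u)^2$; the Poisson term $\rho\nabla\Phi$ is uniformly bounded by Sobolev regularity of $\Phi$; and the high-order capillary term $\delta\rho\nabla\triangle^3\rho$ is handled by integrating by parts to move most derivatives onto the test function, using only $\sqrt{\delta}\triangle^2\rho\in L^2 L^2$ together with $\sqrt{\delta}\rho\in L^\infty(0,T;H^3)$ from (\ref{d8}) and (\ref{d9}). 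Summing these, $\partial_t m_{\delta,r_0}$ is uniformly bounded in $L^p(0,T;W^{-k,p}(\mathbb{T}^3))$ for a suitable pair $(p,k)$.

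With these two bounds in place, Rellich-Kondrachov provides the compact embedding $W^{1,1}(\mathbb{T}^3)\hookrightarrow\hookrightarrow L^q(\mathbb{T}^3)$ for every $q\in[1,3/2)$, while $L^q(\mathbb{T}^3)\hookrightarrow W^{-k,p}(\mathbb{T}^3)$ is continuous, so the Aubin-Lions lemma (Lemma \ref{le2}) yields, up to a subsequence, strong convergence of $m_{\delta,r_0}$ to some $m$ in $L^2(0,T;L^q(\mathbb{T}^3))$ for all such $q$, and a diagonal extraction produces convergence a.e. on $(0,T)\times\mathbb{T}^3$. Since by Lemma \ref{l6} we have $\sqrt{\rho_{\delta,r_0}}\to\sqrt{\rho}$ a.e., and $|m_{\delta,r_0}|\leq \sqrt{\rho_{\delta,r_0}}\cdot|\sqrt{\rho_{\delta,r_0}}u_{\delta,r_0}|$ with the second factor uniformly in $L^\infty L^2$, the limit $m$ vanishes a.e. on the vacuum set $\{\rho=0\}$, and so the definition $u(x,t)=m(x,t)/\rho(x,t)$ is unambiguous and finite off of this set. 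The main obstacle is producing a $\delta$-uniform bound on $\partial_t m_{\delta,r_0}$ in a negative Sobolev space, specifically controlling the capillary term $\delta\rho_{\delta,r_0}\nabla\triangle^3\rho_{\delta,r_0}$ with only the dissipation estimates available at this level, which forces repeated integration by parts to push high-order derivatives onto the test function.
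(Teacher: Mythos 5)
Your proposal follows essentially the same route as the paper: the identical decomposition $\nabla(\rho_{\delta,r_{0}}u_{\delta,r_{0}})=2\nabla\sqrt{\rho_{\delta,r_{0}}}\,\sqrt{\rho_{\delta,r_{0}}}u_{\delta,r_{0}}+\sqrt{\rho_{\delta,r_{0}}}\,\sqrt{\rho_{\delta,r_{0}}}\nabla u_{\delta,r_{0}}$ to place $m_{\delta,r_{0}}$ uniformly in $L^{2}(0,T;W^{1,1})$, a negative-Sobolev bound on $\partial_{t}m_{\delta,r_{0}}$ read off the momentum equation $(\ref{d7})_{2}$, and then Aubin--Lions to get strong $L^{2}(0,T;L^{q})$ convergence for $q\in[1,\tfrac{3}{2})$ plus a.e.\ convergence. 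Your write-up is in fact somewhat more explicit than the paper's (which simply asserts the $L^{2}(0,T;H^{-3})$ bound on the time derivative), and the extra remark about $m$ vanishing on the vacuum set anticipates material the paper defers to Lemma \ref{l9}; no gaps.
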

\begin{proof}
Since
\begin{equation}\label{e7}
\begin{aligned}
\nabla&(\rho_{\delta,r_{0}}u_{\delta,r_{0}})=\nabla{\rho_{\delta,r_{0}}}u_{\delta,r_{0}}+\rho
_{\delta,r_{0}}\nabla{u_{\delta,r_{0}}}\\
&=2\nabla{\sqrt
{\rho_{\delta,r_{0}}}}
\sqrt{\rho_{\delta,r_{0}}}u_{\delta,r_{0}}+\sqrt{\rho_{\delta,r_{0}}}\sqrt{\rho_{\delta,r_{0}}}
\nabla{u_{\delta,r_{0}}}\in L^{2}(0,T;L^{1}),
\end{aligned}
\end{equation}
together with (\ref{e3}), yields
$$\rho_{\delta,r_{0}}u_{\delta,r_{0}}\in L^{2}(0,T;W^{1,1}).$$
In order to apply the Aubin-Lions Lemma, we also need to show
$$\partial_{t}(\rho_{\delta,r_{0}}u_{\delta,r_{0}})\ is\  bounded \ in\  L^{2}(0,T;H^{-s}),\ \ for\ some \ constant\ s>0,$$
actually, use the momentum equation $(\ref{d7})_{2}$, it's easy to check that
$$\partial_{t}(\rho_{\delta,r_{0}}u_{\delta,r_{0}})\ is\  bounded \ in\  L^{2}(0,T;H^{-3}).$$
Hence, using the Aubin-Lions Lemma, the Lemma \ref{l8} is proved.
\end{proof}
\subsection{Step 4: Convergence of $\sqrt{\rho_{\delta,r_{0}}}u_{\delta,r_{0}}$.}
\begin{Lemma}\label{l9}
We have
$$\sqrt{\rho_{\delta,r_{0}}}u_{\delta,r_{0}}\rightarrow m/\sqrt{\rho},\ \ strongly\ in\ L^{2}(0,T;L^{2}).$$
In particular, we have $m(x,t)=0$ a.e. on $\{x\ |\ \rho(x,t)=0\}$ and there exists a function $u(x,t)$ such that $m(x,t)=\rho(x,t)u(x,t)$ and
$$\sqrt{\rho_{\delta,r_{0}}}u_{\delta,r_{0}}\rightarrow \sqrt{\rho}u,\ \ strongly\ in\ L^{2}(0,T;L^{2}).$$

\end{Lemma}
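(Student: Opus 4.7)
The plan is to combine the a.e.\ convergences from Lemmas~\ref{l6} and~\ref{l8} with a uniform equi-integrability estimate coming from the drag term $r_1\rho|u|u$, and then invoke Vitali's convergence theorem. From the cited lemmas, after extracting subsequences, $\rho_{\delta,r_0}\to\rho$ a.e.\ and $\rho_{\delta,r_0}u_{\delta,r_0}\to m$ a.e., so on the non-vacuum set $\{\rho>0\}$ the identity
\[
\sqrt{\rho_{\delta,r_0}}\,u_{\delta,r_0}=\frac{\rho_{\delta,r_0}u_{\delta,r_0}}{\sqrt{\rho_{\delta,r_0}}}
\]
immediately gives a.e.\ convergence to $m/\sqrt{\rho}$. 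I would define the candidate limit $v:=m/\sqrt{\rho}$ on $\{\rho>0\}$ and $v:=0$ on $\{\rho=0\}$; Fatou applied to the uniform $L^\infty(0,T;L^2)$ bound on $\sqrt{\rho_{\delta,r_0}}u_{\delta,r_0}$ ensures $v\in L^2(0,T;L^2)$.

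Next I would establish uniform equi-integrability of $|\sqrt{\rho_{\delta,r_0}}u_{\delta,r_0}|^2=\rho_{\delta,r_0}|u_{\delta,r_0}|^2$ via the decomposition
\[
\rho_{\delta,r_0}|u_{\delta,r_0}|^2=\rho_{\delta,r_0}^{1/3}\bigl(\rho_{\delta,r_0}^{1/3}u_{\delta,r_0}\bigr)^2.
\]
H\"older with exponents $(3,3/2)$ yields, for any measurable $A\subset(0,T)\times\mathbb{T}^3$,
\[
\int_A\rho_{\delta,r_0}|u_{\delta,r_0}|^2\,dx\,dt\leq\Bigl(\int_A\rho_{\delta,r_0}\Bigr)^{1/3}\|\rho_{\delta,r_0}^{1/3}u_{\delta,r_0}\|_{L^3(0,T;L^3)}^{2}\leq C|A|^{(1-1/\gamma)/3},
\]
where the $L^3$-norm is bounded thanks to (\ref{e1}) (itself a direct consequence of the drag term) and the first factor is controlled by the uniform bound $\rho_{\delta,r_0}\in L^\gamma((0,T)\times\mathbb{T}^3)$. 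Specialising to $A=\{\rho_{\delta,r_0}\le\varepsilon\}$ also furnishes the vacuum estimate $\int_0^T\int\rho_{\delta,r_0}|u_{\delta,r_0}|^2\mathbf{1}_{\{\rho_{\delta,r_0}\le\varepsilon\}}\leq C\varepsilon^{1/3}$, uniformly in $\delta,r_0$.

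Combining the a.e.\ convergence on $\{\rho>0\}$ with the vacuum estimate and Egoroff applied to $\rho_{\delta,r_0}\to\rho$ upgrades the picture to convergence in measure on all of $(0,T)\times\mathbb{T}^3$; Vitali's theorem together with the equi-integrability just proved then gives the strong convergence $\sqrt{\rho_{\delta,r_0}}u_{\delta,r_0}\to v$ in $L^2(0,T;L^2)$. To identify $v$ I pass to the a.e.\ limit in the factorisation $\rho_{\delta,r_0}u_{\delta,r_0}=\sqrt{\rho_{\delta,r_0}}\cdot(\sqrt{\rho_{\delta,r_0}}u_{\delta,r_0})$: the right-hand side converges a.e.\ (along a further subsequence) to $\sqrt{\rho}\,v$, whereas the left converges to $m$, so $m=\sqrt{\rho}\,v$ a.e.; in particular $m=0$ on $\{\rho=0\}$, and setting $u:=v/\sqrt{\rho}$ on $\{\rho>0\}$ and $u:=0$ elsewhere gives $m=\rho u$ and $\sqrt{\rho}\,u=v$, completing the lemma. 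The main obstacle is the behaviour on the vacuum set, where $m/\sqrt{\rho}$ has no naive meaning; the drag term is precisely what rescues the argument, since its $L^3$-bound on $\rho^{1/3}u$ is what makes the split $\rho|u|^2=\rho^{1/3}(\rho^{1/3}u)^2$ yield both global equi-integrability and uniform smallness near vacuum---exactly the two ingredients Vitali demands.
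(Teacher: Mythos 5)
Your proposal is correct and follows essentially the same route as the paper: both arguments combine the a.e.\ convergences of $\rho_{\delta,r_{0}}$ and $\rho_{\delta,r_{0}}u_{\delta,r_{0}}$ from Lemmas \ref{l6} and \ref{l8} with the uniform bound on $\rho_{\delta,r_{0}}^{1/3}u_{\delta,r_{0}}$ in $L^{3}(0,T;L^{3})$ supplied by the drag term in order to upgrade to strong convergence of $\sqrt{\rho_{\delta,r_{0}}}u_{\delta,r_{0}}$ in $L^{2}(0,T;L^{2})$, treating the vacuum set separately. The paper implements the uniform-integrability step by truncating the velocity at level $M$, applying Egoroff to the truncated sequence, and bounding the tails by $\frac{2}{M}\int_{0}^{T}\int\rho|u|^{3}$, which is a hands-on version of your H\"older-plus-Vitali argument.
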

\begin{proof}
Recall the Lemma \ref{l8}, we define velocity $u(x,t)$ by setting $u(x,t)=m(x,t)/\rho(x,t)$ when $\rho(x,t)\neq 0$ and $u(x,t)=0$ when $\rho(x,t)=0$, we have
$$m(x,t)=\rho(x,t)u(x,t).$$
Moreover, Fatou's lemma yields
\begin{equation*}
\begin{aligned}
\int_{0}^{T}\int \rho u^{3}dxdt&\leq\int_{0}^{T}\int \lim_{\delta,r_{0}\rightarrow0}\inf \rho_{\delta,r_{0}}u_{\delta,r_{0}}^{3}dxdt\\
&\leq \lim_{\delta,r_{0}\rightarrow0}\inf\int_{0}^{T}\int \rho_{\delta,r_{0}}u_{\delta,r_{0}}^{3}dxdt,
\end{aligned}
\end{equation*}
hence, $\rho^{\frac{1}{3}}u\in L^{3}(0,T;L^{3})$.

Since $m_{\delta,r_{0}}\rightarrow m$ a.e. and $\rho_{\delta,r_{0}}\rightarrow\rho$ a.e., it's easy to show that
$$\sqrt{\rho_{\delta,r_{0}}}u_{\delta,r_{0}}\rightarrow m_{\delta,r_{0}}/\sqrt{\rho_{\delta,r_{0}}},\ \ a.e. \ in\ \{\rho(x,t)\neq 0\},$$
and for almost every $(x,t)$ in $\{\rho(x,t)=0\}$, we have
$$\sqrt{\rho_{\delta,r_{0}}}u_{\delta,r_{0}}\textbf{l}_{|u_{\delta,r_{0}}|\leq M}\leq M\sqrt{\rho_{\delta,r_{0}}}\rightarrow0,$$
as a matter of fact, $\sqrt{\rho_{\delta,r_{0}}}u_{\delta,r_{0}}\textbf{l}_{|u_{\delta,r_{0}}|\leq M}$ converges to $\sqrt{\rho}u\textbf{l}_{|u|\leq M}$ almost everywhere for $(x,t)$. Meanwhile, $\sqrt{\rho_{\delta,r_{0}}}u_{\delta,r_{0}}\textbf{l}_{|u_{\delta,r_{0}}|\leq M}$ is bounded in $L^{\infty}(0,T;L^{6})$, using the Egoroffs theorem gives
\begin{equation}\label{e8}
\sqrt{\rho_{\delta,r_{0}}}u_{\delta,r_{0}}\textbf{l}_{|u_{\delta,r_{0}}|\leq M}\rightarrow \sqrt{\rho}u\textbf{l}_{|u|\leq M}\ \ strongly\ \ in\ L^{2}(0,T;L^{2}).
\end{equation}

Since
\begin{equation}\label{e10}
\begin{aligned}
\int_{0}^{T}&\int|\sqrt{\rho_{\delta,r_{0}}}u_{\delta,r_{0}}-\sqrt{\rho}u|^{2}dxdt\\
&\leq \int_{0}^{T}\int|\sqrt{\rho_{\delta,r_{0}}}u_{\delta,r_{0}}\textbf{l}_{|u_{\delta,r_{0}}|\leq M}-\sqrt{\rho}u\textbf{l}_{|u|\leq M}|^{2}dxdt\\
&+2\int_{0}^{T}\int|\sqrt{\rho_{\delta,r_{0}}}u_{\delta,r_{0}}\textbf{l}_{|u_{\delta,r_{0}}|\geq M}|^{2}dxdt+2\int_{0}^{T}\int|\sqrt{\rho}u\textbf{l}_{|u|\geq M}|^{2}dxdt\\
&\leq\int_{0}^{T}\int|\sqrt{\rho_{\delta,r_{0}}}u_{\delta,r_{0}}\textbf{l}_{|u_{\delta,r_{0}}|\leq M}-\sqrt{\rho}u\textbf{l}_{|u|\leq M}|^{2}dxdt\\
&+\frac{2}{M}\int_{0}^{T}\int\rho_{\delta,r_{0}}u_{\delta,r_{0}}^{3}dxdt+\frac{2}{M}
\int_{0}^{T}\int\rho u^{3}dxdt\rightarrow0,
\end{aligned}
\end{equation}
as $r_{0}=\delta\rightarrow0$ and $M\rightarrow+\infty$.
Thus we proved that
$$\sqrt{\rho_{\delta,r_{0}}}u_{\delta,r_{0}}\rightarrow\sqrt{\rho}u\ \ strongly\ in\ L^{2}(0,T;L^{2}).$$
\end{proof}
\subsection{ Step 5: Convergence of the terms $r_{0}u_{\delta,r_{0}}$, $\rho_{\delta,r_{0}}\mathbb{D}u_{\delta,r_{0}}$, and  $\rho_{\delta,r_{0}}\nabla\triangle^{3}\rho_{\delta,r_{0}}$.}
Let $r_{0}=\delta$, since $\sqrt{r_{0}}u_{\delta,r_{0}}\in L^{2}(0,T;L^{2})$, for any test function $\varphi\in C_{per}^{\infty}((0,T);\mathbb{T}^{3})$, we have
\begin{equation*}
\begin{aligned}
r_{0}\int_{0}^{T}\int u_{\delta,r_{0}}\varphi dxdt\leq \sqrt{r_{0}}\|\sqrt{r_{0}}u_{\delta,r_{0}}\|_{L^{2}(L^{2})}\|\varphi\|_{L^{2}(L^{2})}\rightarrow
0,\ \ as\ r_{0}\rightarrow0.
\end{aligned}
\end{equation*}

To deal with the diffusion term $\rho_{\delta,r_{0}}\mathbb{D}u_{\delta,r_{0}}$, recall (\ref{b32}), we have
\begin{equation}\label{e11}
\begin{aligned}
&\int_{0}^{T}\int {\rm div}(\rho_{\delta,r_{0}}\mathbb{D}u_{\delta,r_{0}})\varphi=\int_{0}^{T}\int \partial_{i}(\rho_{\delta,r_{0}}(\frac{\partial_{i}u_{\delta,r_{0}}^{j}+\partial_{j}
u_{\delta,r_{0}}^{i}}{2}))\varphi\\
&=\frac{1}{2}\int_{0}^{T}\int (\rho_{\delta,r_{0}}u_{\delta,r_{0}}^{j})\partial_{ii}\varphi+\frac{1}{2}\int_{0}^{T}\int\partial_{i}
\rho_{\delta,r_{0}}u_{\delta,r_{0}}^{j}\partial_{i}\varphi+\frac{1}{2}\int_{0}^{T}\int (\rho_{\delta,r_{0}}u_{\delta,r_{0}}^{i})\partial_{ij}\varphi\\
&\ \ \ \ +\frac{1}{2}\int_{0}^{T}\int\partial_{j}
\rho_{\delta,r_{0}}u_{\delta,r_{0}}^{i}\partial_{i}\varphi,\\
&=\frac{1}{2}\int_{0}^{T}\int (\sqrt{\rho_{\delta,r_{0}}}\sqrt{\rho_{\delta,r_{0}}}u_{\delta,r_{0}}^{j})\partial_{ii}\varphi+\int_{0}^{T}\int\partial_{i}
\sqrt{\rho_{\delta,r_{0}}}\sqrt{\rho_{\delta,r_{0}}}u_{\delta,r_{0}}^{j}\partial_{i}\varphi\\
&\ \ \ \ +\frac{1}{2}\int_{0}^{T}\int (\sqrt{\rho_{\delta,r_{0}}}\sqrt{\rho_{\delta,r_{0}}}u_{\delta,r_{0}}^{i})\partial_{ij}\varphi
+\int_{0}^{T}\int\partial_{j}
\sqrt{\rho_{\delta,r_{0}}}\sqrt{\rho_{\delta,r_{0}}}u_{\delta,r_{0}}^{i}\partial_{i}\varphi,\\
\end{aligned}
\end{equation}
by using Lemma \ref{l6} and Lemma \ref{l9}, we can show
$$\int_{0}^{T}\int {\rm div}(\rho_{\delta,r_{0}}\mathbb{D}u_{\delta,r_{0}})\varphi\rightarrow
\int_{0}^{T}\int {\rm div}(\rho\mathbb{D}u)\varphi dxdt.$$
Finally, we show the convergence of the high order term $\rho_{\delta,r_{0}}\nabla\triangle^{3}\rho_{\delta,r_{0}}$:

Since
$$\delta^{\frac{5}{14}}\|\rho_{\delta,r_{0}}\|_{L^{\frac{14}{5}}(H^{3})}\leq \|\rho_{\delta,r_{0}}\|_{L^{\infty}(L^{3})}^{\frac{2}{7}}\|\sqrt{\delta}\rho_{\delta,r_{0}}\|
_{L^{2}(H^{4})}^{\frac{5}{7}}\leq C<+\infty,$$
which implies $\delta^{\frac{5}{14}}\rho_{\delta,r_{0}}\in L^{\frac{14}{5}}(0,T;H^{3})$.

For any test function $\varphi\in C_{per}^{\infty}([0,T];\mathbb{T}^{3})$, we have
$$\delta\int_{0}^{T}\int \rho_{\delta,r_{0}}\nabla\triangle^{3}\rho_{\delta,r_{0}} \varphi dxdt=-\delta\int_{0}^{T}\int\triangle{\rm div}(\rho_{\delta,r_{0}} \varphi)\triangle^{2}\rho_{\delta,r_{0}} dxdt,$$
we focus on the most difficult term
\begin{equation}\label{e12}
\begin{aligned}
&|\delta\int_{0}^{T}\int\triangle(\nabla\rho_{\delta,r_{0}})\triangle^{2}\rho_{\delta,r_{0}}\varphi
dxdt|\\
&\leq C\delta^{\frac{1}{7}}\|\sqrt{\delta}\triangle^{2}\rho_{\delta,r_{0}}\|_{L^{2}(L^{2})}\|\delta
^{\frac{5}{14}}\nabla^{3}\rho_{\delta,r_{0}}\|_{L^{\frac{14}{5}}(L^{2})}\|\varphi\|_{L^{7}(L^{\infty})}
\rightarrow0,
\end{aligned}
\end{equation}
as $\delta\rightarrow0$.

Similarly, we can deal with the other terms from
$$\delta\int_{0}^{T}\int\triangle{\rm div}(\rho_{\delta,r_{0}} \varphi)\triangle^{2}\rho_{\delta,r_{0}} dxdt.$$
Thus, we have
$$\delta\int_{0}^{T}\int \rho_{\delta,r_{0}}\nabla\triangle^{3}\rho_{\delta,r_{0}} \varphi dxdt\rightarrow0,$$
as $\delta\rightarrow0$.

 With all above compactness results, we can pass to the limits in (\ref{d7}) as $\delta\rightarrow0$, we have
 \begin{equation}\label{e13}
 \begin{aligned}
 &\rho_{t}+{\rm div}(\rho u)=0,\ holds\ in\ the \ sense\ of\ distribution\ on\ (0,T)\times\mathbb{T}^{3},\\
&(\rho u)_{t}+{\rm div}(\rho u\otimes u)+\nabla P(\rho)-{\rm div}(\rho \mathbb{D}u)+r_{1}\rho|u|u=\rho\nabla\Phi,\\
& \ \ \ \ \ \ \ \ \ \ \ \ \ \  \ \ \ \ \ \ \ \ \ \ \ \ \ \ \ \ \ \ \ holds\ in\ the \ sense\ of\ distribution\ on\ (0,T)\times\mathbb{T}^{3},\\
&\triangle\Phi=-4\pi G(\rho-1),\ \ holds\ a.e.\ on\ (0,T)\times\mathbb{T}^{3}.
 \end{aligned}
 \end{equation}

 Furthermore, due to the lower semi-continuity of the convex functions ,we can obtain the following energy inequality and B-D entropy by passing to the limits as $r_{0}=\delta\rightarrow0$:
 \begin{equation}\label{e14}
\begin{aligned}
\int\frac{1}{2}\rho &u^{2}+\frac{1}{a\gamma(\gamma-1)}\rho^{\gamma}dx+\int_{0}^{T}\int\rho |\mathbb{D}u|^{2}+r_{1}\int_{0}^{T}\int\rho|u|^{3}
\leq C(T),
\end{aligned}
\end{equation}
and
\begin{equation}\label{e15}
\begin{aligned}
&\frac{1}{2}\int \rho(u+\frac{\nabla\rho}{\rho})^{2} dx+\frac{1}{2}\int_{0}^{T}\int \rho|\nabla u|^{2} +\frac{2}{a\gamma^{2}}\int_{0}^{T}\int|\nabla\rho^{\frac{\gamma}{2}}|^{2}\leq C(T).
\end{aligned}
\end{equation}
Thus we have completed the proof of the Theorem \ref{theorem1}.

{\bf Acknowledgement.} This work was completed while the author was visiting the
Department of Mathematics at The University of Texas at Austin. The author are grateful for
many discussions with Prof. Alexis F.Vasseur and Cheng Yu.



\vspace{2mm}


\end{document}